\theoremstyle{definition}
\newtheorem{definition}{Definition}[section]
\newtheorem{example}[definition]{Example}
\theoremstyle{plain}
\newtheorem{theorem}[definition]{Theorem}
\newtheorem{conjecture}[definition]{Conjecture}
\newtheorem{proposition}[definition]{Proposition}
\newtheorem{lemma}[definition]{Lemma}
\newtheorem{corollary}[definition]{Corollary}
\newtheorem{question}[definition]{Question}
\newtheorem{remark}[definition]{Remark}
\newtheorem{fact}[definition]{Fact}
\newcommand{\game}{\mathcal{G}}
\newcommand{\qo}{\mathbb{Q}}
\newcommand{\ro}{\mathbb{R}}
\newcommand{\RR}{\mathbb{R}}
\newcommand{\Ult}{\operatorname{Ult}}
\newcommand{\fr}{{}^\frown}
\newcommand{\name}{\dot}
\newcommand{\can}{\check}
\newcommand{\force}{\Vdash}
\newcommand{\la}{\langle}
\newcommand{\ra}{\rangle}
\newcommand{\elem}{\prec}
\newcommand{\uhr}{\restriction}
\newcommand{\power}{\mathcal{P}}
\newcommand{\po}{\mathbb{P}}
\newcommand{\A}{\mathfrak{A}}
\newcommand{\U}{\vec{U}}
\newcommand{\W}{\vec{W}}
\newcommand{\x}{\vec{x}}
\newcommand{\MS}{\mathcal{MS}}
\newcommand{\CC}{\mathbb{C}}
\renewcommand{\NG}[1]{\Gamma_{#1}}
\newcommand{\G}[1]{G_{#1}}
\DeclareMathOperator{\cf}{cf}
\DeclareMathOperator{\cp}{cp}
\DeclareMathOperator{\supp}{supp}
\DeclareMathOperator{\Col}{Col}
\DeclareMathOperator{\SK}{SK}
\DeclareMathOperator{\suc}{succ}
\DeclareMathOperator{\ZFC}{ZFC}
\DeclareMathOperator{\NS}{NS}
\DeclareMathOperator{\cof}{Cof}
\DeclareMathOperator{\dom}{dom}
\DeclareMathOperator{\GCH}{GCH}
\DeclareMathOperator{\otp}{otp}
\DeclareMathOperator{\col}{Coll}
\newcounter{saveenumi}
\newcommand{\seti}{\setcounter{saveenumi}{\value{enumi}}}
\newcommand{\conti}{\setcounter{enumi}{\value{saveenumi}}}
\title{On Singular Stationarity I \\(mutual stationarity and ideal-based methods)}
\author{Omer Ben-Neria}
\date{}
\begin{document}
	\maketitle
  \begin{abstract}
	We study several ideal-based constructions in the context of singular stationarity.
	By combining methods of strong ideals, supercompact embeddings, and Prikry-type posets, we obtain three consistency results concerning mutually stationary sets, and answer a question of Foreman and Magidor (\cite{ForMag-MS})  concerning stationary sequences on the first uncountable cardinals, $\aleph_n$, $1 \leq n < \omega$.
\end{abstract}

\section{Introduction}\label{section-0}
This paper is the first of a two-part contribution to the project of generalized stationarity, and particularly to singular stationarity. 
In their seminal work on the non-stationary ideal on $\power_\kappa(\chi)$ (\cite{ForMag-MS}), Foreman and Magidor introduced the notion of mutual stationarity. 
\begin{definition}
	Let $R$ be a set of uncountable regular cardinals and $\vec{S} = \la S_\kappa \mid \kappa \in R\ra$ be a sequence of stationary sets such that $S_\kappa \subset \kappa$.
	The sequence $\vec{S}$ is mutually stationary if and only if for every algebra $\A$ on $\sup(R)$ there exists $M \elem \A$ such that $\sup(M \cap \kappa) \in S_\kappa$ for every $\kappa \in R \cap M$. 
\end{definition}
The first case which presents new challenges and applications to the study of stationary sets is when $R$ is countable and the sequence $\vec{S} = \la S_n \mid n < \omega\ra$ consists of stationary subsets $S_n \subset \kappa_n$ for an increasing sequence of cardinals $\vec{\kappa} = \la \kappa_n \mid n < \omega\ra$. 
The general question of which sequences $\vec{S}$ can be mutually stationary is connected with the long-standing problem whether arbitrary singular cardinals can be J\'{o}nsson.
In \cite{ForMag-MS}, the authors have identified the restricted question-of which sequences $\vec{S}$ of common and fixed cofinality can be mutually stationary-to be significantly important. 
They showed (in ZFC) that every sequence $\vec{S}$ with $S_\kappa \subset \kappa \cap \cof(\omega)$ is mutually stationary and used the result to prove that for every singular cardinal $\chi$, the generalized
nonstationary ideal on $\power_{\omega_1}(\chi)$  is not $\chi^+$-saturated.
They also proved that their mutual stationarity result for sets of countable cofinality ordinals cannot be extended to uncountable cofinalities by showing that in $L$, there is a sequence $\vec{S} = \la S_n \mid 2 \leq n < \omega\ra$ of stationary sets $S_n \subset \omega_n \cap \cof(\omega_1)$, which is not mutually stationary. 
This has prompted the following consistency question.
\begin{question}
	Is it consistent that every sequence $\la S_n \mid 2 \leq n < \omega\ra$ of stationary sets $S_n \subset \omega_n \cap \cof(\omega_1)$ is mutually stationary?
\end{question}

The main result of this paper provides a positive answer to this question.
\begin{theorem}\label{thm 1.1}
	It is consistent relative to the existence of infinitely many supercompact cardinals that every sequence of stationary subsets $S_n \subset \omega_n$ of some fixed cofinality is mutually stationary.
\end{theorem}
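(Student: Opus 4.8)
The plan is to force over a model of $\GCH$ carrying an increasing $\omega$-sequence $\la \kappa_n\mid n<\omega\ra$ of supercompact cardinals, collapsing each $\kappa_n$ down to $\aleph_{n+1}$ (so that $\lambda:=\sup_n\kappa_n$ becomes $\aleph_\omega$) in a way that endows the extension with strong stationary-reflection and strong-ideal properties at the $\aleph_j$'s, and then to read off mutual stationarity from those properties. Only uncountable cofinalities are at issue, Foreman and Magidor having settled $\cof(\omega)$ in $\ZFC$; so fix $\tau=\aleph_m$, $m\geq 1$.

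First I would isolate the ideal-theoretic sufficient condition. A sequence $\vec S=\la S_j\mid m<j<\omega\ra$ with $S_j\subseteq\aleph_j\cap\cof(\tau)$ stationary is mutually stationary iff the projected set $A_{\vec S}$ of internally approachable $x\in\power_{\aleph_{m+1}}(\aleph_\omega)$ of size $\tau$ with $\sup(x\cap\aleph_j)\in S_j$ for all $j>m$ is stationary in $\power_{\aleph_{m+1}}(\aleph_\omega)$, and $A_{\vec S}$ is a countable intersection $\bigcap_{j>m}\{x:\sup(x\cap\aleph_j)\in S_j\}$ (inside that club class) of sets which are each stationary provably in $\ZFC$. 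It therefore suffices to arrange in the extension a normal ideal $\mathcal I_\tau$ on $\power_{\aleph_{m+1}}(\aleph_\omega)$ such that (i) $\mathcal I_\tau$ is contained in the nonstationary ideal --- so $\mathcal I_\tau$-positivity follows from stationarity and hence \emph{arbitrary} stationary sequences, not just specially prepared ones, are covered; (ii) the extension has enough stationary reflection that every finite intersection $\bigcap_{m<j\leq k}\{x:\sup(x\cap\aleph_j)\in S_j\}$ is still stationary; and (iii) $\mathcal I_\tau$ is $\tau$-complete and the quotient $\power(\power_{\aleph_{m+1}}(\aleph_\omega))/\mathcal I_\tau$ has a dense $\tau$-closed subset. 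Granting this, the finite intersections form a descending sequence of $\mathcal I_\tau$-positive sets by (i) and (ii), so $A_{\vec S}$ is $\mathcal I_\tau$-positive by (iii); given an algebra $\B$ on $\aleph_\omega$, forcing with the quotient below $A_{\vec S}\cap\{x:x\prec\B\}$ yields a generic elementary embedding $j:V\to N$ whose generic element is, in $N$, an elementary submodel of $j(\B)$ of size $\tau$ lying in $j(A_{\vec S})$; by elementarity such a submodel already exists in $V$, i.e.\ $\vec S$ is mutually stationary. This is, in spirit, the Foreman--Magidor analysis of the nonstationary ideal on $\power_{\omega_1}(\chi)$, run in the positive direction and at uncountable $\tau$.

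The model is built in two stages. A reverse-Easton preparation first makes each $\kappa_n$ indestructibly supercompact under $\kappa_n$-directed-closed forcing and forces $\GCH$ (and, where needed, injects $\kappa_k$-directed-closed generics supplementing the stationary reflection the collapse will produce). Then one forces with an iteration $\po$ of length $\omega$ whose stage $\po_n$ (with $\kappa_{-1}:=\omega$) collapses $(\kappa_{n-1},\kappa_n)$ so that $\kappa_n$ becomes $\kappa_{n-1}^+$, hence $\aleph_{n+1}$ --- taken as a Prikry-type collapse, rather than a plain Levy collapse, where finer control of the quotient is needed. To obtain $\mathcal I_{\aleph_m}$: the part of $\po$ below $\kappa_m$ is small relative to $\kappa_m$ and preserves its supercompactness, so one may fix, in the corresponding intermediate model, a $\lambda$-supercompactness embedding with critical point $\kappa_m$ (available since $\mathrm{cf}(\lambda)=\omega<\kappa_m$) and lift it through the remaining stages; the stage $\po_m$ is analyzed against the lifted embedding in the style of Jech--Magidor--Mitchell--Prikry to produce a normal ideal on a coherent tower reaching $\power_{\aleph_{m+1}}(\aleph_\omega)$, with the weak homogeneity of the collapse identifying it with the nonstationary ideal (giving (i)); the closure of the tail of $\po$, $\GCH$, and indestructibility of the higher $\kappa_k$'s are then used to push the ideal and its dense $\tau$-closed subset through the remainder of the iteration and to secure the reflection clause (ii).

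\emph{The main obstacle} I expect to be the simultaneous realization of all of this in one model: the strong ideals and the accompanying stationary reflection are needed at every $\aleph_j$ and for every target cofinality $\tau$ at once, each ideal identified with the nonstationary ideal (so that \emph{every} sequence of stationary sets of the fixed cofinality is captured) and each carrying a dense $\tau$-closed quotient that survives the whole forcing, while the $\kappa_n$'s are at the same time being collapsed down to the consecutive $\aleph_n$'s. The phenomenon that defeats the statement in $L$ --- square sequences and the resulting failure of simultaneous reflection, which already breaks even the finite-intersection clause --- must be destroyed uniformly at every level and every cofinality, and this has to be reconciled with the delicate lifting of supercompactness embeddings through an iteration that is collapsing those very cardinals. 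Pinning down the precise strengthening of ``precipitous'' that both suffices for the reduction above and is deliverable by such a (Prikry-type) collapse through all subsequent stages is, I expect, the technical heart of the argument.
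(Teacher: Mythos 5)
There is a genuine gap, and it sits exactly where you place the ``technical heart.'' Your clauses (i) and (iii) are jointly unattainable: on $\power_{\aleph_{m+1}}(\aleph_\omega)$ the nonstationary ideal is the \emph{minimal} normal fine ideal, so a normal (fine) ideal contained in it is equal to it, and you are in effect demanding that $\NS$ on $\power_{\aleph_{m+1}}(\aleph_\omega)$ (or its restriction to the internally approachable structures of size $\tau$) carry a dense $\tau$-closed quotient. A dense $\sigma$-closed subset of the quotient already implies precipitousness below every positive set, but by a theorem of Matsubara and Shelah the nonstationary ideal over $\power_\kappa(\lambda)$ is \emph{nowhere} precipitous when $\lambda$ is a singular strong limit cardinal --- and $\aleph_\omega$ is exactly that in your intended model, where $\GCH$ holds below it. (Foreman--Magidor's non-saturation theorem, proved via mutual stationarity in the very paper you invoke, is already a warning that strong ideal properties of $\NS$ on $\power_\kappa(\chi)$ for singular $\chi$ are out of reach.) So no amount of preparation, indestructibility, or Prikry-style collapsing can deliver the object on which your reduction rests; the demand is forced on you precisely because you want a \emph{single} ideal to make \emph{every} stationary sequence positive. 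Note also that your clause (ii) is not where strength is needed: finite mutual stationarity for sets of bounded cofinality is a ZFC fact (the Baumgartner-style extension argument recalled in the preliminaries).

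The paper's proof needs no ideal on $\power_{\aleph_{m+1}}(\aleph_\omega)$ and no generic embedding in the verification, and the way it escapes your bind is by letting the ideal depend on the cardinal \emph{and on the stationary set}. After a full-support iteration $\po$ of Levy collapses $\Col(\kappa_{n-1},<\kappa_n)$ making $\kappa_n=\aleph_n$, a $\kappa_\omega^+$-supercompact embedding $j$ with $\cp(j)=\kappa_n$ lifts along a $\kappa_{n-1}$-closed quotient $j(\po)/\po$ with a master condition $g$; given stationary $S\subset\aleph_n\cap\cof(\aleph_k)$ with $k+1<n$, one chooses $r\geq g$ and $\gamma$ with $r\force \can{\gamma}\in j(\name{S})\cap\bigcap_i j(\name{C_i})$ (over all club names), and the derived ideal $I_{\gamma,r}$ on $\aleph_n$ is $\aleph_n$-complete, extends $\NS_{\aleph_n}$, is $(\aleph_k+1)$-closed, and has $S$ in its dual filter (Proposition \ref{prop 2.7}). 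Mutual stationarity is then obtained combinatorially: for an $\aleph_k$-closed $M\elem\A$ containing $I$ and $S$, Lemma \ref{Lem 2.2} and Corollary \ref{Cor 2.4} produce a $\lambda$-homogeneous, $M$-approximated positive subset of $S$ contained in every club of $M$; picking $\alpha\in S$ there and forming $\SK_{\A}(M\cup s(\alpha))$ for a ladder-system-derived $s$ gives an $\aleph_k$-closed end-extension $N$ with $N\cap\lambda=M\cap\lambda$ and $\sup(N\cap\aleph_n)=\alpha\in S$ (Lemma \ref{Lem 2.3}, Proposition \ref{prop 2.5}), and the witness for a given algebra is the union of an $\omega$-chain of such end-extensions. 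If you wish to salvage your plan, replace ``one normal ideal below $\NS$ on $\power_{\aleph_{m+1}}(\aleph_\omega)$'' by ``for each $n$ and each stationary $S\subset\aleph_n$ of the relevant cofinality, some nonstationary, sufficiently closed ideal on $\aleph_n$ making $S$ positive'' --- that per-cardinal, per-set weakening is exactly what the supercompact collapse does deliver.
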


Prior to this, many consistency results have established various connections between mutually stationary sequences and large cardinals, via techniques of forcing and inner model theory. 
	Schindler (\cite{Sch-MS}) extended the mutual stationarity counter-example in $L$ to other core models $L[E]$ which can accommodate large cardinals at the level of overlapping extenders \footnote{i.e., the core model for almost linear iteration.}.
	In terms of consistency strength, the combined works of Cummings-Foreman-Magidor (\cite{CFM-CanStrII} for the upper bound) and Koepke-Welch (\cite{KoeWel-MS} for the lower bound) show that the existence of an $\omega$ sequence of cardinals $\vec{\kappa} = \la \kappa_n \mid n < \omega\ra$ such that every sequence $\vec{S}$ of sets $S_n \subset \kappa_n \cap \cof(\omega_1)$ is mutually stationary, is equiconsistent with the existence of a single measurable cardinal.
	Moreover, it is shown in \cite{CFM-CanStrII} that if $\vec{\kappa} = \la \kappa_n \mid n <\omega\ra$ is a Prikry generic sequence, then every sequence of stationary sets on it is mutually stationary. 
	Building on this model, Koepke (\cite{Koe-MS}) has devised an additional forcing extension in which the cardinals in $\vec{\kappa}$ become the cardinals $\aleph_{2n+1}$, $n < \omega$, and every stationary sequence $S_n \subset \aleph_{2n+1} \cap \cof(\omega_1)$ is mutually stationary. 
	Koepke's result is consistency-wise optimal in several different senses. First of all, the large cardinal assumption of a single measurable cardinal does not suffice to remove the gaps between the cardinals, as Koepke and Welch have shown (\cite{KoeWel-MS2}) that the mutual stationarity of every sequence $\vec{S}$ with $S_n \subset \omega_n \cap \cof(\omega_1)$ implies there exists an inner model with many measurable cardinals of high Mitchell order\footnote{i.e., there exists a sequence $\la \kappa_n \mid n < \omega\ra$ of regular cardinals, such that for almost every $m < \omega$, the set of cardinal $\alpha < \kappa_m$ whose Mitchell order is at least $\kappa_{m-2}$ is stationary in $\kappa_m$.}. Second, Koepke and Welch have further shown that a change in cofinalities of $S_n \subset \omega_{2n+1} \cap \cof(\omega_1)$, from $\omega_1$ to $\omega_2$, requires to increase the large cardinal assumption to at least inaccessibly many measurable cardinals. 
	Nevertheless, Koepke's result indicates that the consistency strength of mutual stationarity of stationary subsets of the odd $\aleph_n$'s should be weaker than the strength of the result of Theorem \ref{thm 1.1}. We prove that this is indeed the case: Let us say that a cardinal $\kappa$ carries a $(*,\lambda)$-sequence of measures if there is a Mitchell order increasing sequence of $\kappa^+$-supercompact measures on $\kappa$, of length $\lambda$.
	\begin{theorem}\label{thm1.3}
		The assertion that every sequence of stationary sets $S_n \subset \omega_{2n+1}$ of common fixed cofinality is consistent relative to existence of infinitely many cardinals $\la \kappa_n \mid n < \omega\ra$ such that each $\kappa_n$ carries a $(*,\kappa_{n-1}^+)$ sequence of measures\footnote{For $n = 0$, we replace $\kappa_{n-1}^+$ with $\aleph_0$}.
	\end{theorem}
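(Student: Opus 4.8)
The plan is to build the model by L\'evy-collapsing each $\kappa_n$ onto the odd $\aleph$'s and then running the ideal-based argument of the preceding sections over the collapse extension, exactly as in the proof of Theorem~\ref{thm 1.1}; the one structural change is that here each $\kappa_n$ must supply \emph{two} consecutive $\aleph$-levels instead of one, which is precisely why $\kappa_n^+$-supercompactness --- rather than supercompactness reaching all the way up to $\kappa_{n+1}$ --- already suffices. Concretely: after a Laver-style preparation making each $(*,\kappa_{n-1}^+)$-sequence $\U^n$ indestructible under the collapse forcings that act below $\kappa_n$, together with a routine $\GCH$ arrangement, force with $\po=\prod_{n<\omega}\Col(\kappa_{n-1}^+,<\kappa_n)$ (Easton support, with $\kappa_{-1}^+:=\aleph_0$). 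In $V^{\po}$ one has $\kappa_n=\aleph_{2n+1}$ and $\kappa_{n-1}^+=\aleph_{2n}$, hence $\kappa_n^+=\aleph_{2n+2}$, while cardinals at and above $\sup_n\kappa_n$ and the $\GCH$ pattern are preserved; the factorization $\po\cong\po_{<n}\times\Col(\kappa_{n-1}^+,<\kappa_n)\times\po_{\geq n+1}$, with $\po_{\geq n+1}$ being $<\kappa_n^+$-closed and $\po_{<n}\times\Col(\kappa_{n-1}^+,<\kappa_n)$ of size $\kappa_n$, is what transfers the large-cardinal structure of $\kappa_n$ into $V^{\po}$.

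The next step is to extract the relevant ideals. Since $\kappa_n=\aleph_{2n+1}$ is now a successor cardinal it can no longer be measurable, but the standard collapse-to-a-successor analysis --- as in the Jech--Magidor--Mitchell--Prikry construction, run over the indestructible preparation --- turns each $U^n_\alpha$ into a normal ideal $I^n_\alpha$ on $\aleph_{2n+1}$ whose generic ultrapower $j^n_\alpha\colon V^{\po}\to M^n_\alpha$ has critical point $\aleph_{2n+1}$, moves $\aleph_{2n+1}$ past $\aleph_{2n+2}$, and has $M^n_\alpha$ closed under $\aleph_{2n+1}$-sequences from $V^{\po}$ --- so that $M^n_\alpha$ computes $H(\aleph_{2n+2})$, and in particular all cofinalities below $\aleph_{2n+1}$, correctly. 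It is exactly here that $\kappa_n^+$-supercompactness, rather than mere measurability, is used: only the former survives the collapse with enough closure to certify the level immediately above $\aleph_{2n+1}$. Moreover the Mitchell-increasing property of $\U^n$ descends to the $I^n_\alpha$ (so $\la I^n_\beta\mid\beta<\alpha\ra\in M^n_\alpha$), which lets one choose, for every fixed regular $\lambda$ and every prescribed stationary $S_{2n+1}\subseteq\aleph_{2n+1}\cap\cof(\lambda)$, an index $\alpha_n<\kappa_{n-1}^+$ --- uniformly in $n$ --- for which $I^n_{\alpha_n}$ concentrates on $\cof(\lambda)$ and $S_{2n+1}$ is $I^n_{\alpha_n}$-positive.

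To conclude, fix an algebra $\mathfrak{A}$ on $\aleph_\omega$ and $\vec{S}=\la S_{2n+1}\mid n<\omega\ra$ with each $S_{2n+1}$ stationary in $\aleph_{2n+1}\cap\cof(\lambda)$. Force over $V^{\po}$ with the $\omega$-length generic tower built from $\la I^n_{\alpha_n}\mid n<\omega\ra$, below the condition forcing $[\mathrm{id}]\in j(S_{2n+1})$ at every coordinate, to obtain a generic elementary embedding $j\colon V^{\po}\to N$ with $N$ well-founded, the $n$-th coordinate embedding of critical point $\aleph_{2n+1}$, and $\aleph_{2n+1}\in j(S_{2n+1})$ for all $n$. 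Working inside $N$ --- and using the closure of the coordinate ultrapowers to keep track of the cofinalities below each $\aleph_{2n+1}$ --- one builds, by the methods of \cite{CFM-CanStrII} adapted to these ideals, from $j$ and a $\lambda$-sized $X\elem\mathfrak{A}$ an elementary submodel of $j(\mathfrak{A})$ realizing $j(S_{2n+1})$ at every coordinate; since $j$ is elementary, the analogous submodel exists already in $V^{\po}$, i.e.\ there is $M\elem\mathfrak{A}$ with $\sup(M\cap\aleph_{2n+1})\in S_{2n+1}$ for all $n$. As $\mathfrak{A}$ was arbitrary, $\vec{S}$ is mutually stationary.

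I expect the main obstacle to be the well-foundedness of the $\omega$-tower in the last step: an infinite iteration of generic ultrapowers need not converge, and it is here that both parts of the hypothesis are consumed. The closure of $M^n_\alpha$ under $\aleph_{2n+1}$-sequences --- bought by $\kappa_n^+$-supercompactness --- matches the length of the $n$-th critical point and so rules out a mismatch between consecutive levels of the tower; and the \emph{length} $\kappa_{n-1}^+=\aleph_{2n}$ of the Mitchell-increasing sequence on $\kappa_n$ matches exactly the size $\aleph_{2n}$ of the generic data produced by the levels below $n$, supplying just enough indices $\alpha_n$ to absorb that data and keep the direct limit well-founded. This is precisely the point at which a single measurable is no longer enough, and the reason the present hypothesis, while strictly weaker than that of Theorem~\ref{thm 1.1}, must still take this calibrated form. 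The remaining work --- the indestructible preparation, the exact closure computation after the collapse, preservation of stationarity through the tower, and the finitely many low levels at which $\aleph_{2n+1}\leq\lambda$ --- is routine bookkeeping, deferred to the body of the proof.
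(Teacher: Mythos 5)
Your plan rests on a Laver preparation followed by a product Levy collapse, with generic-ultrapower ideals extracted from the surviving supercompact measures. Unfortunately the very first step cannot be carried out under the hypothesis of Theorem~\ref{thm1.3}. A $(*,\kappa_{n-1}^+)$-sequence on $\kappa_n$ consists of $\kappa_n^+$-supercompact measures, so the ultrapower $j\colon V\to M$ by any of them is only closed under $\kappa_n^+$-sequences, and in particular $j(\kappa_n)<\kappa_{n+1}$. When you lift $j$ through $\po=\prod_m\Col(\kappa_{m-1}^+,<\kappa_m)$ you must produce a master condition covering $j``\G{\Col(\kappa_n^+,<\kappa_{n+1})}$, an object of size $\kappa_{n+1}$, inside $j(\Col(\kappa_n^+,<\kappa_{n+1}))=\Col(j(\kappa_n)^+,<j(\kappa_{n+1}))$, whose conditions have size $<j(\kappa_n)^+<\kappa_{n+1}$. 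No such condition exists, and no Laver preparation can manufacture one: indestructibility raises the degree to which the \emph{ground model} measure survives small forcing, not the degree of closure of the target model. This is exactly the obstruction that forced the paper away from the plain Levy collapse used in Theorem~\ref{thm 1.1}, where $\kappa_\omega^+$-supercompactness supplied the missing closure. Your parenthetical claim that ``$\kappa_n^+$-supercompactness \dots already suffices'' has the direction of the difficulty reversed.

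The paper's actual proof replaces the collapse-plus-lift scheme by something structurally different. It forces with $\qo=(\po*\col)*\CC$, where $\po$ is a Gitik-style Easton iteration adding supercompact Prikry/Magidor sequences below $\kappa$, $\col$ is a Levy collapse to $\lambda=\kappa_{n-1}^+$, and $\CC$ is a $\kappa^+$-length club-shooting iteration through sets in $\bigcap_\tau U_{\kappa,\tau}$. The point of $\po$ and $\CC$ is twofold and both parts are missing from your sketch: (i) the Gitik iteration makes the tail of the forcing not add bounded subsets of $\kappa_{n-1}^+$, so ideals on $\kappa_n$ can be built locally from a forcing whose relevant part has size $\leq\kappa_n^+$, matching the closure that $\kappa_n^+$-supercompactness actually gives; (ii) the club-shooting iteration \emph{destroys} the stationarity of every ``bad'' set --- one not positive with respect to any filter extension of the $U_{\kappa,\tau}$'s --- so that every surviving stationary $S\subset\kappa\cap\cof(<\lambda)$ is positive for a suitable ideal. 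Without a mechanism playing the role of $\CC$, nothing in your construction rules out stationary sets that fail to be positive for any $I^n_\alpha$, so the selection step ``choose $\alpha_n$ for which $S_{2n+1}$ is $I^n_{\alpha_n}$-positive'' has no justification.

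There is a further mismatch: the ideals extracted in the paper are not normal ideals supporting a well-founded generic ultrapower tower; they are \emph{Prikry-closed} ideals (Definition in Section~\ref{section-3}), and they are consumed via the end-extension machinery of Lemma~\ref{Lem 4.1} (the Prikry-closed analogue of Lemma~\ref{Lem 2.2}), building the witnessing structure $M=\bigcup_n M_n$ one level at a time exactly as in the proof of Theorem~\ref{thm 1.1}. There is no $\omega$-length generic tower anywhere in the argument, and the well-foundedness worry you flag --- and then dismiss with a heuristic about ``absorbing the generic data'' --- does not arise in the paper's proof. Your final elementarity step (``build a submodel of $j(\mathfrak A)$ realizing $j(S_{2n+1})$ inside $N$, then pull back'') would also need to be rewritten, since it presupposes a single external $j$ with all the $\aleph_{2n+1}$ as successive critical points; the paper never constructs such an object. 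In short, the proposal identifies the right target model (GCH, $\kappa_n=\aleph_{2n+1}$, ideals on $\aleph_{2n+1}$ with appropriate closure) but the mechanism for producing it and the ideals is wrong, and the specific place it breaks --- the master condition, hence the entire ideal-extraction step --- is the very place the reduced hypothesis of Theorem~\ref{thm1.3} bites.
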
 
	Although the consistency assumption of Theorem \ref{thm1.3} is at the level of supercompact cardinals, we expect the arguments of the proof to lead to the following tighter consistency assumption, at the level of measurable cardinals.
	\begin{conjecture}\label{con1.35}
		The assertion that every sequence of stationary sets $S_n \subset \omega_{2n+1}$ of common fixed cofinality, is consistent relative
		a sequence of infinitely many  measurable cardinals $\la \kappa_n \mid n < \omega\ra$ such that each $\kappa_n$ carries an $(\omega,\kappa_{n-1}^+)$ repeat point measure.
	\end{conjecture}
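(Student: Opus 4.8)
The plan is to rerun the proof of Theorem~\ref{thm1.3} with supercompactness replaced, cardinal by cardinal, by a repeat point, following the standard method of Gitik and Mitchell whereby a repeat point on a coherent sequence of measures substitutes for a measure of high Mitchell order and, in constructions of this kind, for a bounded degree of supercompactness. As in Koepke's model, the final universe is produced by a length-$\omega$ Prikry-type iteration over $\langle\kappa_n\mid n<\omega\rangle$: each $\kappa_n$ is singularized to cofinality $\omega$ by a diagonal Prikry forcing guided by its repeat-point sequence $\vec U_n$ of length $\kappa_{n-1}^+$, with Levy collapses interleaved so that $\kappa_n$ becomes $\omega_{2n+1}$, the intermediate cardinals $\omega_{2n}$ appearing as collapse images and playing no role, since the statement concerns only the odd $\aleph_n$'s. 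Since for a cofinality-$\omega$ target the statement is the ZFC theorem of Foreman and Magidor, the content is for a fixed uncountable $\mu$, the principal case being $\mu=\omega_1$; throughout, $\mu$ and its successor structure must be left untouched by the collapses so that the relevant points of each $S_n$ retain cofinality $\mu$ in the extension.

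The concrete steps are: (1) a reverse Easton preparation below $\sup_n\kappa_n$ fixing the $\GCH$ pattern and the gap structure (so that exactly one cardinal lies strictly between $\kappa_{n-1}$ and $\kappa_n$ after forcing) and ``stabilizing'' the repeat points, i.e.\ guaranteeing that each $\vec U_n$ is still a repeat-point sequence computing the correct initial segments in the preparatory extension; (2) the definition of the main poset $\po$ as the Prikry-type iteration whose $n$th coordinate is the diagonal Prikry forcing relative to $\vec U_n$ interleaved with $\Col$-collapses chosen so that $\kappa_n\mapsto\omega_{2n+1}$, conditions being finite Prikry stems together with measure-one tops; (3) the usual package of iteration lemmas --- the Prikry property, enough closure of the tops, $\kappa_n$-properness, and a factorization of $\po$ at each $\kappa_n$ into a small part times a highly closed homogeneous tail --- which are the direct analogues of the lemmas used in the supercompact version and should transfer once the repeat-point bookkeeping from step~(1) is in place.

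The heart of the matter is the verification of mutual stationarity. Given, in the extension, an algebra $\A$ on $\sup_n\kappa_n$ and stationary sets $S_n\subseteq\omega_{2n+1}\cap\cof(\mu)$, one must produce $M\elem\A$ with $\sup(M\cap\kappa_n)\in S_n$ for all $n$, and the obstruction is that $M$ should be the union of a continuous $\elem$-chain of length $\mu$ whose $\kappa_n$-supremum is steered into $S_n$ simultaneously for every $n$. With full supercompactness one works inside a single $j\colon V\to N$ with $N^{\kappa_{n-1}^+}\subseteq N$, picks the generic and an internal $\elem$-chain below $j(\kappa_n)$ whose supremum is the desired point of $j(S_n)$, and reflects; with only a repeat point one has instead a short ultrapower (or a short-extender embedding) with far less closure, so the single strong embedding must be replaced by a Gitik-style iteration of ultrapowers by the measures below the repeat point, along which the stationarity of $S_n$, the club of closure points of $\A$ below $\kappa_n$, and the $\mu$-chain of submodels must all be tracked and then amalgamated over $n$. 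I expect the main obstacle to be precisely this $\kappa_n$-local step: showing that the diagonal Prikry coordinate at $\kappa_n$ preserves the stationarity of every $S_n\subseteq\omega_{2n+1}\cap\cof(\mu)$ and admits, for each name $\dot\A$ for such an algebra, a master-condition-like object forcing the existence of $\dot M\elem\dot\A$ with $\sup(\dot M\cap\kappa_n)\in\dot S_n$, using only the repeat point --- this is the analogue of deriving $\neg\mathrm{SCH}$ from a repeat point rather than from a supercompact, now additionally burdened with the stationary-reflection and club-guessing fine structure that the supercompact proof gets for free. The remaining task, combining the $\kappa_n$-local chains into a single $M\elem\A$, should go through as in the supercompact case via the mutual genericity of the coordinates of $\po$ together with the strong Prikry property, but it must be re-checked against the weaker embeddings.
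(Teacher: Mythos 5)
The statement you are addressing is a \emph{conjecture} in the paper, not a theorem: the author explicitly writes that he ``expects the arguments of the proof [of Theorem~\ref{thm1.3}] to lead to'' it, and offers no argument. So there is no in-paper proof against which to measure your attempt, and you should have flagged that the question is open rather than presenting a strategy as if it were a proof.

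Beyond that, your proposed forcing has the wrong shape, and the error is not a detail but an internal contradiction. You propose that ``each $\kappa_n$ is singularized to cofinality $\omega$ by a diagonal Prikry forcing \dots so that $\kappa_n$ becomes $\omega_{2n+1}$.'' These two requirements are incompatible: $\omega_{2n+1}$ is a regular cardinal, and if you Prikry-singularize $\kappa_n$ to cofinality $\omega$, it cannot survive as any $\aleph_m$ with $m\geq 1$. Moreover, the statement of the conjecture concerns stationary sets $S_n\subset\omega_{2n+1}\cap\cof(\mu)$; stationarity on $\kappa_n$ only makes sense because $\kappa_n$ stays regular. In the paper's proof of Theorem~\ref{thm1.3} each $\kappa_n$ is \emph{not} singularized: the local poset $\qo$ of Proposition~\ref{prop4.1} is $(\po\ast\col)\ast\CC$, where $\po$ is a Gitik iteration adding supercompact Prikry/Magidor sequences at cardinals $\nu<\kappa_n$ with $o(\nu)>0$ (it never touches $\kappa_n$ itself), $\col=\col(\kappa_{n-1}^+,<\kappa_n)$ collapses the interval so that $\kappa_n$ becomes $(\kappa_{n-1}^+)^+$, and $\CC$ is the club-shooting iteration that manufactures the Prikry-closed nonstationary ideals on $\kappa_n$. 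Any repeat-point variant would have to preserve this architecture --- singularize below $\kappa_n$, collapse the gap, shoot clubs, and argue that the nonstationary ideal restricted to small cofinalities on $\kappa_n$ is Prikry-closed, now using only a $(\omega,\kappa_{n-1}^+)$-repeat point (i.e.\ normal measures rather than $\kappa_n^+$-supercompact measures) to run the absorption argument in Section~\ref{section-3}. The genuine open difficulty, which your sketch does not address, is whether the threading/absorption argument of Gitik, which the paper runs with supercompact measures $W_{\kappa,\tau}$ to obtain the generic clubs $\vec C$ and the quotient analysis of $j_\tau(\qo)/\qo$, can be reproduced from a mere repeat point; this is exactly what the author leaves as a conjecture. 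Your honest caveats about the weaker embeddings touch on this, but the singularize-and-collapse plan you wrap them in would have to be discarded wholesale before any of it could be made to work.
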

	The notion of an $(\omega,\lambda)$ repeat point measure on a cardinal $\kappa$ has been introduced by Gitik in (\cite{Gitik-NSprec}), and is strictly 
	weaker than the assumption of $o(\kappa) = \kappa^{++}$.
	The assumptions of Theorem \ref{thm1.3} and the conjecture are related through the work of Gitik on precipitous nonstationary ideals. 
	The proof of Theorem \ref{thm1.3} establishes a new connection between Prikry-type forcing, strong ideals, and mutual stationarity, through the notion of Prikry-closed ideal, which we believe to be of independent interest.     
	It is well known that come consistency results in set theory rely on Prikry-type forcing techniques, and we expect that the ability to perform such forcing ``tasks``  while obtaining strong Prikry-types of ideals could have many other applications. 
	
	In addition to the results which focus on stationary sets of some common fixed cofinality, other consistency results establish the mutual stationarity of sequences which contain finitely many cofinalities. These are described in the works of Liu-Shelah (\cite{LiuShe-MS}) and  Adolf-Cox-Welch (\cite{AdoCoxWel-MS}). 
	The stationary sequences which are studied in the two papers consist of stationary sets of full cofinality. Namely, sequences $\vec{S} = \la S_n \mid n < \omega\ra$ for which each $S_n$ is of the form $S_n = \kappa_n \cap \cof(\mu_n)$ for some $\mu_n < \kappa_n$. 
	The paper \cite{LiuShe-MS} contains a theorem of Shelah which shows that from the assumption of infinitely many supercompact cardinals, it is consistent that for every $k < \omega$ and a function $f : \omega\setminus k \to \{\omega,\omega_k\}$, the sequence $\vec{S}$ defined by $S_n = \omega_n \cap \cof(f(n))$ is mutually stationary. 
	We improve the result and show it is possible to replace the full-cofinality sets with arbitrary stationary subsets.
	
	\begin{theorem}\label{thm1.2}
		It is consistent relative to the existence of infinitely many supercompact cardinals that for every $k < \omega$, for every sequence $\vec{S} = \la S_n \mid k<  n < \omega\ra$ of stationary sets $S_n \subset \omega_n$ which consist of ordinals of cofinalities $\omega$ or $\omega_k$, $\vec{S}$ is  mutually stationary.
	\end{theorem}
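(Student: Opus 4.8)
The plan is to work in (a mild variant of) the model $V[G]$ obtained in the proof of Theorem~\ref{thm 1.1}: one starts from an increasing sequence $\la\kappa_n\mid n<\omega\ra$ of supercompact cardinals, runs the Laver-style preparation, and then performs a reverse Easton iteration of Levy collapses turning $\kappa_n$ into $\omega_{n+1}$, interleaved with the posets that seal the required strong ideals. From that construction we use: (a) $\kappa_n=\omega_{n+1}$ and enough of $\GCH$; (b) for every $n$ and every regular cardinal $\mu<\omega_n$ a normal $\omega_n$-complete ideal $I_n^\mu$ on $\omega_n$ such that for every $I_n^\mu$-positive $S\subseteq\omega_n\cap\cof(\mu)$, forcing with $(\power(\omega_n)/I_n^\mu)\restriction S$ gives a generic elementary $j\colon V[G]\to N$ with critical point $\omega_n$, $\omega_n=[\mathrm{id}]\in j(S)$, and $N$ closed under $\mu$-sequences from the generic extension; and (c) for each $k\ge 1$ a coordinating generic embedding attached to $\omega_k=\kappa_{k-1}$, namely a generic elementary $j\colon V[G]\to N$ with critical point $\omega_k$, $j\pwi\aleph_\omega\in N$ and $N^{\aleph_\omega}\cap V[G]\subseteq N$, into which the embeddings of (b) at the cardinals $\omega_n$, $n>k$, lift via master conditions. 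A single model supplies all of this simultaneously for every $k$.

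After the usual reductions we may fix $k$, a sequence $\vec S=\la S_n\mid k<n<\omega\ra$ with each member of $S_n\subseteq\omega_n$ of cofinality $\omega$ or $\omega_k$, and an algebra $\A$ on $\aleph_\omega$. Since $S_n=(S_n\cap\cof(\omega))\cup(S_n\cap\cof(\omega_k))$ one summand is stationary, so we may assume each $S_n$ is contained in $\cof(\omega)$ or in $\cof(\omega_k)$; put $A=\{n: S_n\subseteq\cof(\omega)\}$ and $B=(\omega\setminus(k+1))\setminus A$. If $B=\emptyset$ invoke the Foreman--Magidor $\ZFC$ theorem and finish; so assume $B\neq\emptyset$, whence $k\ge 1$ and the embedding of (c) is available. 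It suffices, for a large regular $\theta$ and the Skolemized structure $\mathfrak M=\la H_\theta,\in,<^*_\theta,\A,\vec S,\la\omega_n\mid n<\omega\ra\ra$, to find $M\elem\mathfrak M$ with $\A,\vec S\in M$ and $\sup(M\cap\omega_n)\in S_n$ for all $n\in M$ with $k<n$; then $M\cap\aleph_\omega\elem\A$.

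We build $M$ inside $N$ and pull it back along the embedding $j$ of (c), using that the statement ``there is $M\elem j(\mathfrak M)$ with $\sup(M\cap j(\omega_n))\in j(S_n)$ for all relevant $n\in M$'' transfers downward by elementarity. In $V[G]$ first choose, via the Foreman--Magidor machinery, $P_0\elem\mathfrak M$ of size $\omega_k$ with $\A,\vec S\in P_0$, presented as the union of a continuous $\elem$-chain of length $\omega_k$ of submodels of smaller size, in which every $A$-coordinate has already been steered: $\sup(P_0\cap\omega_n)\in S_n$ (automatically of cofinality $\omega$) for $n\in A$, with the chain eventually constant at each such coordinate; then $j\pwi P_0\in N$ and $j\pwi P_0\elem j(\mathfrak M)$. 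Inside $N$ build a continuous $\elem$-increasing chain $\la M_\xi\mid\xi\le\omega_k\ra$ of submodels of $j(\mathfrak M)$ with $M_0=j\pwi P_0$: at a successor step $\xi\mapsto\xi+1$ invoke the images $j(I_n^{\omega_k})$ for $n\in B$ (positive on $j(S_n)$ in $N$) together with master conditions for the composite embeddings to pick $M_{\xi+1}\supseteq M_\xi$ in $N$ with $\sup(M_{\xi+1}\cap j(\omega_n))$ strictly above $\sup(M_\xi\cap j(\omega_n))$ and in $j(S_n)$ for every $n\in B$, while leaving every $A$-coordinate unmoved; take unions at limits. For $n\in A$ one gets $\sup(M_{\omega_k}\cap j(\omega_n))=j(\beta_n)\in j(S_n)$ of $N$-cofinality $\omega$, since $\cf(\beta_n)=\omega<\mathrm{crit}(j)$ makes $j$ continuous there. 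For $n\in B$ the value $\sup(M_{\omega_k}\cap j(\omega_n))$ is a strictly increasing union of $\omega_k$ points of $j(S_n)$, hence of $N$-cofinality $\omega_k$; a pressing-down/reflection bookkeeping inside the construction, using that $j(S_n)$ is stationary in $j(\omega_n)$ and hence has stationarily many cofinality-$\omega_k$ limit points lying in $j(S_n)$, is built in to make this union actually an element of $j(S_n)$. Thus $N\models$ ``such a submodel exists'', and by elementarity of $j$ the same holds in $V[G]$; as $\A$ was arbitrary, $\vec S$ is mutually stationary.

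The main obstacle is exactly the successor step of the $N$-side construction (and the parallel clause for $A$-coordinates in the choice of $P_0$): one must push every $B$-coordinate into its \emph{arbitrary} stationary target $j(S_n)$ while moving no $A$-coordinate and staying inside $N$. These requirements pull against one another, because Skolem terms of $j(\mathfrak M)$ formed from the new generators below $j(\omega_{n'})$ for $n'\in B$ can land below $j(\omega_n)$ for $n\in A$. Reconciling them is where the fine structure of the iteration enters: the high closure of the collapse segments between consecutive $\kappa_n$'s, together with the $\omega_n$-completeness of the ideals $I_n^{\omega_k}$, must be combined so that the master-condition forcing used at each step adds no new bounded subset of $\omega_n$ for $n\in A$, and so that the critical sequence of the coordinating embedding prevents the $A$-coordinates from being overshot at limit stages of cofinality other than $\omega$. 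A secondary but still essential point is the reflection argument guaranteeing that the $\omega_k$-limit at each $B$-coordinate lies in $j(S_n)$ rather than merely being a limit point of it; this is precisely the ingredient already needed for the cofinality-$\omega_k$ instance of Theorem~\ref{thm 1.1}, now to be run uniformly across infinitely many coordinates at once.
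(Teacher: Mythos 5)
There is a genuine gap, and it sits exactly at the point you flag as ``the main obstacle''. Your plan pins the cofinality-$\omega$ coordinates first (inside $P_0$) and then runs $\omega_k$-many ideal/master-condition extension steps at the cofinality-$\omega_k$ coordinates ``while leaving every $A$-coordinate unmoved''. But every ideal-based extension step (Lemma \ref{Lem 2.2}, or any generic-embedding/master-condition variant of it) requires the structure being extended to be closed under $<\omega_k$-sequences, and in particular countably closed: the diagonalization over the $|M|$-many functions of $M$ needs initial segments of the enumeration to lie in $M$. A countably closed $M\elem\A$, however, can never satisfy $\cf(\sup(M\cap\omega_n))=\omega$ for a regular $\omega_n$ of the sequence: a countable cofinal sequence in $M\cap\omega_n$ would be an element of $M$, so its supremum would lie in $M\cap\omega_n$, a contradiction. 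So the moment $P_0$ realizes $\sup(P_0\cap\omega_n)\in S_n\subset\cof(\omega)$ for $n\in A$, the structure (and all its later extensions $M_\xi$ that are to keep those suprema fixed) has lost the closure needed to perform the successor steps at the $B$-coordinates. Your proposed remedies (distributivity of the collapse segments, the critical sequence of the coordinating embedding) address forcing-level closure, not this Skolem-hull obstruction, which is the actual reason the two demands ``pull against one another''. The paper resolves it by a structurally different device: the $\cof(\omega)$ coordinates are never pinned during the closed-structure phase; instead one keeps, at each such coordinate, an \emph{unbounded} homogeneous approximated set of candidate ordinals (a Namba-type tree of valid sequences, with a function $\ell$ having infinite preimages), and only afterwards uses the Foreman--Magidor cut-and-choose games $\game_\delta$, whose winning strategies for the good player exist on a club of $\delta$, to steer each such coordinate's supremum into $S_m$ as a limit along the infinitely many tree levels in $\ell^{-1}(m)$; all hulls $M(\vec\eta)$ along the tree stay $\omega_k$-closed, and only the final branch model $M(b)$ (a countable increasing union) gives up closure, by which point no further ideal steps are needed.

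A second, independent error is the claim that $j(S_n)$ ``has stationarily many cofinality-$\omega_k$ limit points lying in $j(S_n)$''. For an arbitrary stationary $S_n\subset\omega_n\cap\cof(\omega_k)$ this is simply false: a stationary set need not contain any limit points of itself, and the whole point of Theorem \ref{thm1.2} is that the $S_n$ are arbitrary (not full-cofinality) stationary sets. Hence your scheme of reaching $\sup(M_{\omega_k}\cap j(\omega_n))$ as an increasing $\omega_k$-limit of previously chosen points of $j(S_n)$ cannot succeed in general, and no pressing-down bookkeeping can repair it. In the paper (already in Theorem \ref{thm 1.1}) each $\cof(\omega_k)$ coordinate is landed in $S_n$ in a \emph{single} end-extension step, using the nonstationary closed ideal together with the ladder-system function $s$ of Lemma \ref{Lem 2.3} to make the new supremum equal to the chosen $\alpha\in S_n$; the supremum is never produced as a limit of $S_n$-points, so no self-limit-point hypothesis is needed. (A smaller issue: the coordinating embedding at $\omega_k$ with $N^{\aleph_\omega}\cap V[G]\subseteq N$ is an extra assumption not furnished by Proposition \ref{prop 2.7}, and the paper's argument never needs it.)
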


	\textbf{Organization of the paper - }  Following a review of some preliminaries below, the rest of the paper is organized as follows: In the first part of Section \ref{section-1}, we describe the basic method by which the stationary witnessing structures $M \elem \A$ are obtained. The method is based on the existence of certain nonstationary ideals. In the second part of that section, we prove Theorem \ref{thm 1.1} by constructing such ideals from elementary embeddings associated with supercompact cardinals. In Section \ref{section-2}, we build on the results of the previous section and combine them with arguments of Foreman and Magidor to prove Theorem \ref{thm1.2}. The first sections require only basic knowledge of forcing and large cardinal theory, which can be found in most introductory textbooks\footnote{e.g., see \cite{jech}.}.
	In Section \ref{section-3}, we prove Theorem \ref{thm1.3}. The proof requires basic familiarity with Prikry-type posets and their iteration.
	Although the arguments rely on methods of Gitik from \cite{Gitik-HB},\cite{Gitik-iteration}, and \cite{Gitik-precaleph2}, which will not be reconstructed in full,  we have tried to include many details describing Gitik's work and ideas, to allow a continuous line of arguments and thoughts.

	\subsection{Preliminaries}
	We review some relevant background material on algebras, partially order sets, and ideals. 
	Our notations are (hopefully) standard, and we list several notable ones. For a regular cardinal $\lambda$, we denote by $\cof(\lambda)$ the class of all ordinals  with cofinality $\lambda$, and similarly use notations such as $\cof(<\lambda)$ and $\cof(\geq \lambda)$ in the obvious way. 
	
	\textbf{Algebras and stationary sets - }
	An algebra on a set $H$ is a structure $\A = \la H,f_i\ra_{i<\omega}$ where each $f_i$ is a finitary function defined on $H$. 
	By a substructure $M$ of $\A$ we will always mean that $M \elem \A$ is an elementary substructure. For every subset $X$ of $\A$, we denote 
	the Skolem hull of $X$ in $\A$ by $\SK_{\A}(X) = \{ t^{\A}(p) \mid t^{\A} \text{ 
		is a } \A\text{-Skolem term, and } p \in  X\}$.
	We say that a substructure $M$ is $\mu$-closed if ${}^{<\mu}M \subset M$.
	A standard argument shows that a subset $S$ of a regular cardinal $\kappa$ is stationary if and only if for every set $H$ which contains $\kappa$ and algebra $\A$ on $H$, there exists a substructure $M \elem \A$ such that $\sup(M \cap \kappa) \in S$. Therefore if $\vec{S} = \la S_n \mid n < \omega\ra$ is mutually stationary then each $S_n$ is stataionary. 
	Since every algebra on a set $H$ can be easily extended to an algebra on a larger domain with the same effect on stationarity, we can replace the domain of the algebra in the definition of mutually stationary sets from $\sup(R)$ to $H_\theta$ for any $\theta > \sup(R)$. 
	Suppose that $M$ is a substructure of an algebra $\A$ of size $|M| = \mu$, and $\kappa_1 < \kappa_2 < \dots < \kappa_m$ are regular caridnals in $M$ above $\mu$. By a well-known Lemma \cite{Baum}, adding to $M$ ordinals below some $\kappa_i$ does not change its supremum below regular cardinals $\kappa > \kappa_i$. Therefore, it is not difficult to verify that
	for every finite sequence of stataionary sets $S_1,\dots,S_m$ with $S_i \subset \kappa_i \cap \cof(\leq \mu)$, there exists an elementary extension $M'$ of $M$ (i.e., $M \elem M' \elem \A$) so that $\sup(M' \cap \kappa_i) \in S_i$ for all $i = 1,\dots,m$, and $\sup(M' \cap \kappa) = \sup(M \cap \kappa)$ for all regular cardinals $\kappa > \kappa_m$ in $M$ (for more details, see \cite{ForMag-MS}). This argument allows us to ignore a finite initial segment of a stationary sequences $\la S_n \mid n < \omega\ra$ of bounded cofinalities and argue for the mutual stationarity of a tail. 
	
	\textbf{Partially ordered sets and projections - }
	A partially ordered set $\po$ (referred to as a poset, or a forcing notion) consists of an order relation $\leq$ on a domain, whose notation will usually be abused and denoted by $\po$ as well. We shall use the Jerusalem forcing convention by which for two conditions $p,q \in \po$,  $p$ is stronger (more informative) than $q$ is denoted by $p \geq q$. A generic filter for $\po$ will usually be denoted by $\G{\po}$. 
	We say that a forcing $\po$ is $\mu$-closed if every $\delta < \mu$ and an increasing sequence of conditions $\la p_i \mid i < \delta\ra$ of $\po$, there exists a condition $p \in \po$ which is an upper bound to the sequence. $\po$ is called $\mu$-distributive if it does not add new sequences of ordinals of length $\delta < \mu$. 
	Let $\po$ and $\qo$ be partially ordered sets. We say that $\po$ absorbs $\qo$, or that $\po$ projects to $\qo$ if the forcing with $\po$ introduces a generic filter of $\qo$.
	Given a $\po$-name $\NG{\qo}$ of a $\qo$-generic introduces a forcing projection $\pi_{\G{\qo}}$ from $\po$ to the boolean completion of $\qo$, defined by $\pi_{\NG{\qo}}(p) = \{ q\in \qo \mid p \not\force \can{q} \in \NG{\qo}\}$. 
	In a generic extension $V[\G{\qo}]$ by a generic filter $\G{\qo}$, we define the quotient of $\po$ with respect to $\NG{\qo}$ and $\G{\qo}$ to be the poset $\{ p \in \po \mid \pi_{\NG{\qo}}(p) \in \G{\qo} \}$.
	In general, this quotient is denoted by $\po/\la \NG{\qo},\G{\qo}\ra$, however in many standard cases, where the projection $\pi_{\NG{\qo}}$ is natural\footnote{e.g., if $\po = \la \po_{\alpha}, \po(\alpha) \mid \alpha < \kappa\ra$  is an iterated forcing and $\qo = \qo_\nu$ is an initial segment of this iteration} we omit $\NG{\qo}$ from the notation and write $\po/\G{\qo}$ for the quotient. 
	Further, when working in the ground model $V$, we shall denote the $\qo$-name for this quotient poset of $\po$ by $\po/\qo$, and frequently identify $\po$ with its isomorphic forcing notion $\qo * \po/\qo$.
	
	
	\textbf{Ideals -}
	A $\kappa$-complete ideal on $\kappa$ is a subset $I$ of the powerset of $\kappa$, $\power(\kappa)$, which is closed under subsets and unions of less than $\kappa$ of its sets. We shall always assume $I$ is nonprincipal and uniform, namely $I \neq \emptyset$, $\kappa \not\in I$, and $\alpha \in I$ for every $\alpha < \kappa$. 
	Given an ideal $I$ on $\kappa$, we denote its dual filter $\{\kappa \setminus Z \mid Z \in I\}$ by $\check{I}$, and the family of $I$-positive sets, $\power(\kappa) \setminus I$,  by $I^{+}$. 
	Given two $I$-positive sets $A,B \in I^+$, we write $A \leq_I B$ when $A\setminus B \in I$, and $A \equiv_I B$ when $A \leq_I B$ and $B \leq_I A$.

	\section{Mutual stationarity and closed nonstationary ideals}\label{section-1}
	
	The purpose of this section is to prove Theorem \ref{thm 1.1};
	we show that starting from a model with infinitely many supercompact cardinals, there exists a generic extension in which for every algebra $\A$ on $H_\theta$ for some regular $\theta > \aleph_\omega$ and a stationary sequence $\vec{S} = \la S_n \mid k < n < \omega\ra$ in the $\aleph_n$'s, of fixed uncountable cofinality $S_n \subset \omega_n \cap \cof(\omega_k)$, there exists a substructure $M \elem \A$ satisfying $\sup(M \cap \kappa_n) \in S_n$ for all $n > k+1$. 
	
	In order to establish the existence of a desirable substructure $M$ in the generic extension, we shall construct an elementary increasing sequence $\la M_n \mid k+1 < n < \omega\ra$ of substructure of $\A$, which are all $\omega_k$-closed, have size $\aleph_k$, and further satisfy the following two conditions:
	\begin{itemize}
		\item $\sup(M_n \cap \kappa_n) \in S_n$,
		\item $M_{n+1} \cap \kappa_n = M_n \cap \kappa_n$.
	\end{itemize}
	
	It clearly follows that $M = \cup_n M_n$ is a substructure of $\A$ with $\sup(M \cap \omega_n) \in S_n$ for all $n > k+1$. As mentioned in the preliminary section, we can further extend $M$ below $\kappa_{k+1}$ (without changing its supermum below larger cardinals) and obtain a substructure of $\A$ which further meets $S_{k+1}$.
	The structures $M_n$ are defined inductively starting from any $\omega_k$-closed structure $M_k \elem \A$ of size $|M_k| = \aleph_k$, which contains the sequence $\vec{S}$.
	The existence of a $\aleph_k$-closed substructures of an arbitrary algebra $\A$ is guaranteed in our model which satisfies the $\GCH$ holds below $\aleph_\omega$.
	We note that the construction framework described below, can be applied to internally approachable structures, whose existence does not require the cardinal arithmetic assumption of $\GCH$, and to stationary sets which belong to the approachability ideal (see \cite{CFM-CanStrII} for additional information).

	The inductive step (producing $M_{n+1}$ from a given $M_n$) relies on the ability to prove that the following holds in our model. \\
	\textbf{ The end-extension property -} Let $\mu \leq \lambda < \kappa$ be three regular cardinals in $\A \cap \aleph_{\omega}$. 
	Suppose that $M \elem \A$ is $\mu$-closed, has size $|M| = \mu$, and contains $\mu,\lambda,\kappa$, then for every stationary set $S \subset \kappa \cap \cof(\mu)$ in $M$, $M$ has an elementary extension $N$,  $M \elem N \elem \A$, such that $\sup(N \cap \kappa) \in S$ and $N \cap \lambda = M \cap \lambda$.\\
	
	The rest of this section is organized as follows. We commence by describing a framework for constructing end extensions. The main assumption allowing this construction is the existence of sufficiently many nonstationary and closed ideals on $\kappa$ so that every stationary subset $S$ of $\kappa$ is positive with respect to such an ideal.
	Following the description of nonstationary closed ideals and their connection to end-extensions of substructures, we show that the relevant ideal assumption holds in a generic extension model, in which supercompact cardinals are collapsed to be the $\aleph_n$'s. 
	
	\subsection{Supercompact cardinals and nonstationary closed ideals}
	Suppose that $\mu, \lambda < \kappa$ are all regular cardinals, and  $\A$ be  an algebra which extends $\la H_\theta,\in,<_\theta\ra$, where  $\theta > 2^\kappa$ is regular and $<_\theta$ is a well-order on $H_\theta$.
	Let $M \elem \A$ be a $\mu$-closed substructure of size $|M| = \mu$, which contains $\mu,\lambda$, and $\kappa$. 
	To find a suitable elementary end-extension $M$ above $\lambda$, we shall consider elementary extensions of the form $N = \SK_{\A}(M \cup X) = \{ t^{\A}(p) \mid t^{\A} \text{ is a } \A\text{-Skolem term, and } p \in M \cup X\}$.
	
	The following folklore result follows from the fact $H_\theta$ satisfies the axiom of replacement and separation, and therefore the restriction of an $\A$-Skolem term $t$ to $H_\kappa$ must be an element of $\A$. 
	
	\begin{fact}\label{Lem 2.1}
		If $X$ is a subset of $H_\kappa$ then
		\[\SK_{\A}(M \cup X) = \{ f(\vec{z}) \mid \vec{z} \in X^{<\omega}, f \in M, \dom(f) = [H_\kappa]^{|\vec{z}|} \}.\] 
	\end{fact}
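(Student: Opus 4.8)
The plan is to prove the displayed identity by establishing the two inclusions separately. The inclusion ``$\supseteq$'' is immediate, and essentially all the content lies in ``$\subseteq$'', where the observation recorded just before the statement --- that the restriction of an $\A$-Skolem term to $H_\kappa$ is an element of $\A$ --- does the real work.

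For ``$\supseteq$'', I would fix $f \in M$ with $\dom(f) = [H_\kappa]^{n}$ and a tuple $\vec z \in X^{n}$. Then $f \in M \subseteq \SK_{\A}(M\cup X)$ and every entry of $\vec z$ lies in $X \subseteq \SK_{\A}(M\cup X)$. Since $\SK_{\A}(M\cup X)$ is an elementary substructure of $\A$ containing $M\cup X$, it is closed under $\A$-definable functions with parameters from itself; as $\A$ expands $\la H_\theta,\in,<_\theta\ra$, function application $(g,\vec x)\mapsto g(\vec x)$ is $\A$-definable (using $<_\theta$ to recover the finite subset $\rng(\vec z)$ from the tuple $\vec z$), so $f(\vec z)\in\SK_{\A}(M\cup X)$.

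For ``$\subseteq$'', I would take an arbitrary element, necessarily of the form $t^{\A}(p)$ for an $\A$-Skolem term $t$ and a finite tuple $p$ from $M\cup X$; after permuting coordinates, write $p = p_M \fr \vec z$ with $p_M$ a tuple from $M$ and $\vec z\in X^{n}$ for some $n$. I would then consider the ``partial evaluation''
\[
g \;=\; \la\, t^{\A}(p_M,\vec u)\;\mid\;\vec u\in[H_\kappa]^{n}\,\ra
\]
and check three things. First, $g$ is an element of $\A$, i.e.\ of $H_\theta$: this is the flagged point, and it holds because $[H_\kappa]^{n}$ has cardinality $\le |H_\kappa| = 2^{<\kappa} \le 2^\kappa < \theta$ (using $\theta>2^\kappa$ and regularity of $\kappa$), so $g$ is a set of hereditary size $<\theta$ by replacement and separation in $H_\theta$. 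Second, $g$ is $\A$-definable from the parameters $p_M$ and $H_\kappa$; and $H_\kappa\in M$ since $\kappa\in M$ and $H_\kappa$ is $\A$-definable from $\kappa$, so by elementarity of $M\elem\A$ we get $g\in M$. Third, $t^{\A}(p)=t^{\A}(p_M,\vec z)=g(\vec z)$ with $g\in M$, $\dom(g)=[H_\kappa]^{n}$, and $\vec z\in X^{n}$, which exhibits $t^{\A}(p)$ as a member of the right-hand side.

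I do not anticipate a genuine obstacle: the statement is folklore and the argument is short. The one point needing care --- and the only place a hypothesis is actually consumed --- is the first item above, that the partial evaluation $g$ of the Skolem term is a bona fide element of $H_\theta$; this is exactly the replacement-and-separation remark preceding the statement, and it rests on the cardinal inequality $|H_\kappa|<\theta$, which is why $\theta$ is taken above $2^\kappa$. The remaining bookkeeping --- splitting $p$ into its $M$-part and its $X$-part, and translating between finite tuples and finite subsets of $H_\kappa$ by means of the well-order $<_\theta$ carried by $\A$ --- is routine.
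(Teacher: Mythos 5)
Your argument is correct and is essentially the paper's own: the paper states the fact with only the remark that, by replacement and separation in $H_\theta$, the restriction of an $\A$-Skolem term to $H_\kappa$ is an element of $\A$, which is precisely your key step of absorbing the partial evaluation $g$ into $M$ via elementarity. The rest of your write-up just supplies the routine bookkeeping the paper leaves implicit.
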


	\begin{definition}\label{Def 2.1}
		Let $\kappa \in M$ be a regular cardinal and $A \subset$ be an unbounded set. 
		We say $A$ is $\lambda$-\textbf{homogeneous} for $M$ if $f\uhr A$ is constant for every function $f : \kappa \to \lambda$ in $M$. 
		We further say $A$ is \textbf{approximated} in $M$ if for every function $f : \kappa \to \lambda$ in $M$ there exists $A_f \in M$ such that $A \subset A_f \subset \kappa$ and $f\uhr A_f$ is constant. 
	\end{definition}
	
	The following is immediate from the preceding fact and definition.
	\begin{lemma}\label{Lem 2.15}
		If $A$ is a $\lambda$-homogeneous and approximated set in $M$ then for every $\alpha \in A$, $\SK_{\A}(M \cup\{\alpha\})$ does not add new ordinals below $\lambda$ to $M$, and is therefore an end-extension of $M$ above $\lambda$.
	\end{lemma}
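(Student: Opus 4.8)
The plan is to read off the elements of $N := \SK_{\A}(M \cup \{\alpha\})$ via Fact \ref{Lem 2.1} and then use the approximation property of $A$ to pull every ordinal of $N$ lying below $\lambda$ back into $M$. Since $\alpha < \kappa$ we have $\alpha \in H_\kappa$, so Fact \ref{Lem 2.1} applies with $X = \{\alpha\}$ and tells us that every ordinal $\xi \in N$ is of the form $\xi = f(\alpha)$ for some $f \in M$ with $\dom(f) = H_\kappa$ and $\rng(f) \subseteq \mathrm{Ord}$ (after, if necessary, composing an $M$-function of several variables with the diagonal $\beta \mapsto (\beta,\dots,\beta)$, which is definable in $\A$ and hence keeps the composite in $M$).

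Since $M \cap \lambda \subseteq N \cap \lambda$ is trivial, it suffices to prove $N \cap \lambda \subseteq M$. Fix $\xi \in N \cap \lambda$, written $\xi = f(\alpha)$ as above. The key move is to truncate $f$ below $\lambda$: define $g : \kappa \to \lambda$ by $g(\beta) = f(\beta)$ if $f(\beta) < \lambda$, and $g(\beta) = 0$ otherwise. Since $f,\lambda,\kappa \in M$ and $M \elem \A$, the function $g$ is definable in $\A$ from parameters of $M$, so $g \in M$; and $g(\alpha) = f(\alpha) = \xi$ because $\xi < \lambda$. Now apply the hypothesis that $A$ is approximated in $M$ to $g$: there is $A_g \in M$ with $A \subseteq A_g \subseteq \kappa$ such that $g \uhr A_g$ is constant, with some value $c$. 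Since $A \subseteq A_g$ is nonempty and $M \elem \A$, elementarity gives $c \in M$; and since $\alpha \in A \subseteq A_g$ we get $c = g(\alpha) = \xi$. Hence $\xi \in M$. This proves $N \cap \lambda = M \cap \lambda$, i.e.\ $N$ adds no new ordinals below $\lambda$, which is precisely the assertion that $N$ is an end-extension of $M$ above $\lambda$.

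I do not expect a genuine obstacle: the lemma is essentially immediate from Fact \ref{Lem 2.1} and Definition \ref{Def 2.1}, as the surrounding text states. The only points that deserve to be spelled out are (i) that Fact \ref{Lem 2.1} does indeed present the ordinals of $N$ as values $f(\alpha)$ with $f \in M$ (possibly after composing with a diagonal to absorb repeated coordinates), and (ii) that the truncated function $g$ stays inside $M$ and that elementarity carries the constant value $c$ from $\A$ down to $M$. It is also worth remarking that only the ``approximated'' clause of the hypothesis is actually used here; $\lambda$-homogeneity of $A$ plays no role in this particular implication.
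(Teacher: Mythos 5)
Your proof is correct and is essentially the argument the paper has in mind: the paper gives no separate proof, declaring the lemma immediate from Fact \ref{Lem 2.1} and Definition \ref{Def 2.1}, and your write-up simply spells out that route (truncating the Skolem function to a map $\kappa\to\lambda$, applying the approximation $A_g\in M$, and pulling the constant value back into $M$ by elementarity). Your closing remark is also accurate, since the approximation clause of Definition \ref{Def 2.1} already entails $\lambda$-homogeneity, so nothing is lost by not invoking it separately.
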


	The following folklore example demonstrates how large cardinal assumptions can be used to construct homogeneous approximated sets. 
	\begin{example}
		Suppose that $\kappa$ is a measurable cardinal and $U \in M$ is a $\kappa$-complete ultrafilter on $\kappa$. Let $A = \cap( U \cap M)$. Then for every $\lambda \in M \cap \kappa$, $A$ is a $\lambda$-homogeneous approximated set for $M$. 
	\end{example}
	\begin{proof}
		First, note that $A \in U$, since $U$ is $\kappa$-complete and $|U \cap M| < \kappa$. Next, for every $\lambda < \kappa$ and $f : \kappa \to \lambda$, $\kappa$ decomposes into a disjoint union $\kappa = \biguplus_{\nu<\lambda}f^{-1}(\nu)$, and exactly one of the sets $f^{-1}(\nu^*) $ must be a member of $U$. Since $U,f \in M$ then $\nu^* \in M$, and so, the set $A_f =  f^{-1}(\nu^*) \in M \cap U$ is a suitable approximation of $A$ with respect to $f$.         
	\end{proof}

	When $\kappa$ is accessible (e.g., $\kappa = \aleph_n$ for some $n  <\omega$), one can replace the ultrafilter-based method in the example above with an ideal-based construction.
	Suppose that $I$ is an ideal on $\kappa$ and $f : \kappa \to \lambda$. If $I$ is not prime (i.e., if $\check{I}$ is not an ultrafilter) it is not guaranteed that one of the decomposition sets of $\kappa = \biguplus_{\nu < \lambda}f^{-1}(\nu)$ belongs to $\check{I}$. Nevertheless, since $I$ is $\kappa$-complete and $\lambda < \kappa$, at least one of the sets $f^{-1}(\nu)$, $\nu < \lambda$, is a member of $I^+$. Moreover, if $I,f \in M$ and $B \in I^+ \cap M$, then there exists $\nu^* < \lambda$ such that $f^{-1}(\nu^*) \cap B \in I^+$.
	
	\begin{definition}\label{Def 2.15}
		We say that a non-principal $\kappa$-complete ideal $I$ on $\kappa$ is $\mu$-closed if $I^+$ has a $\leq_I$ dense subset $D$ such that the restriction of $\leq_I$ to $D$ is $\mu$-closed.
	\end{definition}
	
	The following well-known lemma has been used to obtain various results from certain ideal-based assumptions. In the context of mutual stationarity, it has been used in \cite{CFM-CanStrII} to reprove a theorem of Shelah about mutual stationarity in two cofinalities. 
	
	\begin{lemma}\label{Lem 2.2}
		Let $M \elem \A$ be a $|M|$-closed algebra.  Suppose that $\kappa > |M|$ is a regular cardinal and  $I \in M$ is a $\kappa$-complete, $(|M|+1)$-closed ideal on $\kappa$. Then for every $A \in I^+ \cap M$ and $\lambda \in M \cap \kappa$ there exists a subset $B \subset A$ in $I^+$ which is $\lambda$-homogeneous and approximated in $M$. 
	\end{lemma}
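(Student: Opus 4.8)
The plan is to reproduce the measurable-cardinal argument sketched in the Example above, except that the single intersection $\bigcap(U\cap M)$ --- which need not be $I$-positive once $I$ fails to be prime --- is replaced by a transfinite recursion of length $\mu:=|M|$ carried out inside a $\leq_I$-dense subfamily $D\subseteq I^+$ on which $\leq_I$ is $(\mu+1)$-closed, as furnished by Definition~\ref{Def 2.15}; fix such a $D$ with $D\in M$ (by elementarity, since $I\in M$). As $|M|=\mu$, there are at most $\mu$ functions $f\colon\kappa\to\lambda$ in $M$; enumerate them --- externally, with repetitions if needed --- as $\la f_i\mid i<\mu\ra$. The recursion will produce a sequence $\la B_i\mid i<\mu\ra$ of members of $D\cap M$, each $\leq_I$-below all its predecessors, together with ordinals $\nu_i\in M\cap\lambda$, so that $B_0\leq_I A$ and $B_{i+1}\leq_I f_i^{-1}(\nu_i)$ for all $i<\mu$.

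To start, $\leq_I$-density of $D$ and elementarity (parameters $A,D\in M$) give $B_0\in D\cap M$ with $B_0\leq_I A$. At a successor stage, given $B_i\in D\cap M$: by the discussion preceding Definition~\ref{Def 2.15}, since $I,f_i\in M$ and $B_i\in I^+\cap M$ there is some $\nu<\lambda$ with $f_i^{-1}(\nu)\cap B_i\in I^+$; as this is a first-order statement with all parameters in $M$, elementarity lets us take such a $\nu=\nu_i$ in $M\cap\lambda$, and then $\leq_I$-density together with elementarity give $B_{i+1}\in D\cap M$ with $B_{i+1}\leq_I f_i^{-1}(\nu_i)\cap B_i$ (hence $B_{i+1}\leq_I B_i$ and $B_{i+1}\leq_I f_i^{-1}(\nu_i)$). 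At a limit stage $i<\mu$, the sequence $\la B_j\mid j<i\ra$ has length $<\mu$ with all entries in $M$, so it lies in $M$ by $\mu$-closure of $M$; since $\leq_I\uhr D$ is $(\mu+1)$-closed this sequence has a $\leq_I$-lower bound in $D$, and because the existence of such a bound is a statement whose parameters ($D$ and this sequence) are in $M$, one such bound $B_i$ may be found in $D\cap M$. This completes the recursion.

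Now apply $(\mu+1)$-closure of $\leq_I\uhr D$ once more --- this time in $V$, to the length-$\mu$ sequence $\la B_i\mid i<\mu\ra$ (which need not lie in $M$) --- to obtain $B^*\in D$ with $B^*\leq_I B_i$ for all $i<\mu$; by transitivity of $\leq_I$ this yields $B^*\leq_I A$ and $B^*\leq_I f_i^{-1}(\nu_i)$ for every $i$. I claim that
\[ B \;:=\; A\cap\bigcap_{i<\mu}f_i^{-1}(\nu_i) \]
is the desired set. Clearly $B\subseteq A$. Next, $B^*\setminus B$ is a union of at most $\mu$ sets, namely $B^*\setminus A$ and the sets $B^*\setminus f_i^{-1}(\nu_i)$ for $i<\mu$, each of which lies in $I$; since $\mu<\kappa$ and $I$ is $\kappa$-complete, $B^*\setminus B\in I$, and as $B^*\in I^+$ this forces $B\in I^+$. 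Finally, any $f\colon\kappa\to\lambda$ in $M$ is equal to some $f_i$, and $B\subseteq f_i^{-1}(\nu_i)$, so $f\uhr B$ is constant with value $\nu_i\in M$; thus $B$ is $\lambda$-homogeneous for $M$, and taking $A_f:=f_i^{-1}(\nu_i)\in M$ shows it is approximated in $M$ as well.

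The point that needs care is that the recursion never leaves $M$: this is what keeps elementarity supplying the constant values $\nu_i\in M\cap\lambda$ at successor stages --- and these must lie in $M$, since an approximating set $A_f\in M$ pins its constant value down inside $M$ --- and the closure points $B_i\in D\cap M$ at limit stages. The bookkeeping works out exactly: the recursion has length $\mu=|M|$, at most $\mu$ functions have to be treated, and $M$ is $\mu$-closed, so every proper initial segment of $\la B_i\mid i<\mu\ra$ belongs to $M$; the single step taken outside $M$ is the choice of the final $\leq_I$-lower bound $B^*$, where all that is required is $B^*\in I^+$, which is precisely what $(|M|+1)$-closure provides. The non-primeness of $I$ is absorbed by working inside the closed dense family $D$ rather than intersecting in $I^+$ directly; the one genuinely delicate point --- needed just to start the recursion --- is that a witness $D$ to the closure of $I$ may be taken inside $M$, which follows from $I\in M$ together with $|M|\in M$, as holds in all intended applications.
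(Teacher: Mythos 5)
Your proof is correct and follows essentially the same route as the paper's: recursively thread a $\leq_I$-decreasing sequence through $D\cap M$ using the $\mu$-closure of $M$ to keep limit-stage initial segments inside $M$ and the $(\mu+1)$-closure of $\leq_I\uhr D$ to take lower bounds, then extract a positive set by removing $<\kappa$ many $I$-small error sets. The only (cosmetic) differences are that you select $B_i,\nu_i$ via elementarity rather than via the definable well-order $<_\theta$, and you extract the final set directly as $A\cap\bigcap_{i<\mu}f_i^{-1}(\nu_i)$ rather than as $(A\cap B')\setminus Z'$ --- your form of $B$ is if anything slightly cleaner, since it makes the inclusion $B\subseteq f_i^{-1}(\nu_i)$ (and hence both homogeneity and approximation) immediate.
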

	\begin{proof}
		Let $D \in M$  be a $\leq_I$-dense set of $I^+$ which contains lower bounds to all  $\leq_I$ decreasing sequences of length $\delta \leq |M|$ of its elements. 
		Fix an enumeration $\vec{f} = \la f_i \mid i < |M|\ra$ of all the functions $f: \kappa \to \lambda$ in $M$ and define a $\leq_I$ decreasing sequence of sets
		$\vec{A} = \la A_i \mid i < |M|\ra \subset D \cap M$. Each $A_i$ will be definable from $\vec{f}\uhr i, \vec{A}\uhr i$, $D$, and the definable well order $<_\theta$ of $\A$. The fact $M$ is $|M|$-closed implies that every initial segment of $\vec{f}$ belongs to $M$. This, in turn, implies $\vec{A}\uhr i, A_i \in M$ for all $i <|M|$.
		We first pick $A_0 \in D \cap M$ to be a subset of $A$. Suppose that $\vec{A}\uhr i$ has been constructed for some $i > 0$. We assume $\vec{A}\uhr i \subset M \cap D$ is $\leq_I$-decreasing. If $i$ is a limit ordinal we take $A_i \in D$ to be the first $\leq_I$-lower bound to $\vec{A}\uhr i$ according in the well order $<_\theta$. 
		Suppose $i = i' + 1$ is a successor ordinal.     
		By the remark preceding to Definition \ref{Def 2.15} above, there exists some $\nu < \lambda$ in $M$ such that $f_{i'}^{-1}(\nu)\cap A_{i'} \in I^+$.
		Let $\nu' < \lambda$ be the minimal value $\nu$ for which the last holds, and let $A_i$ be the $<_\theta$-first set in $D$ such that $A_i \leq_I f_{i'}^{-1}(\nu')\cap A_{i'}$.
		
		This concludes the construction of the sequence $\vec{A} \subset M \cap D$.  It is immediate from the successor steps of the construction that for every $i' < |M|$ there exists a set $Z_{i'} \in M \cap I$ such that $f_{i'}\uhr (A_{i'+1} \setminus Z_{i'})$ takes a constant value $\nu' \in M$.
		Let $B' \in D$ be a $\leq_{I}$-lower bound of $\vec{A}$,  $Z' = \bigcup_{i' < |M|}Z_{i'}$, and define  $B = (A \cap B') \setminus Z'$. 
		Since $Z' \in I$ and $B' \leq_I A$, we have that $B \subset A$ is $I$-positive, and it is clear from the construction that $B$ is a $\lambda$-homogeneous and approximated in $M$.
	\end{proof}
	
	By Lemma \ref{Lem 2.2} we see that if $S \in M \cap I^+$ for $M,I$ which satisfy the conditions of the Lemma, then there exists $A \subset S$ in $I^+$ which is  $\lambda$-homogeneous and approximated in $M$. Hence, for every $\alpha \in A \setminus \sup(M \cap \kappa)$, the structure $M^* = \SK_{\A}(M \cup \{\alpha\})$  is an elementary end-extension of $M$ above $\lambda$. Of course $\alpha \in M^*$ cannot be $\sup(M^* \cap \kappa)$ and there is no reason to believe $\sup(M^* \cap \kappa) \in S$.
	To obtain a desirable end-extension of $M$ which meets $S$, we  replace $M^*$ with a substructure $N \elem M^*$ satisfying $\sup(N \cap \kappa) =\alpha$. 
	We will now see that this is possible, subject to the additional assumption that $I$ is nonstationary. The following notions are needed to construct $N$. 
	
	\begin{definition}\label{def-2.25}
		${}$
		\begin{enumerate}
			\item A ladder system on a cardinal $\kappa$ is a function $\delta : \kappa \to [\kappa]^{<\kappa}$ which assigns to each limit ordinal $\alpha < \kappa$ an increasing cofinal set $\delta(\alpha) = \la \delta(\alpha)(i) \mid i < \cf(\alpha)\ra$  in $\alpha$.
			\item Suppose that $\delta$ is a ladder system on $\kappa$ and let  $s :\kappa \to H_\kappa$ be the $\delta$-derived initial segments function, defined by $s(\alpha) = \{ \delta(\alpha)\uhr j \mid j < \cf(\alpha)\}$ where $\delta(\alpha)\uhr j = \la \delta(\alpha)(i) \mid i < j\ra$ for each $j < \cf(\alpha)$.  
		\end{enumerate}
	\end{definition}

	Let $\delta$ be a ladder system on $\kappa$ in $M$. 
	It follows that both $\delta(\alpha)$ and $s(\alpha)$ belong to $M^* = \SK_{\A}(M \cup \{\alpha\})$. Consequently, if we define 
	$N = \SK_{\A}(M \cup s(\alpha))$ then $M \elem N \elem M^*$ and thus
	$N \cap \lambda = M \cap \lambda$. 
	
	
	\begin{lemma}[$\GCH$]\label{Lem 2.3}
		Let $\alpha \in \kappa\setminus \sup(M \cap \kappa)$ such that $\cf(\alpha) \subset M$ and define $N =  \SK_{\A}(M \cup s(\alpha))$. 
		\begin{enumerate}
			\item If $M$ is $\cf(\alpha)$-closed then so is $N$.
			\item If $\alpha \in \kappa\setminus \sup(M \cap \kappa)$ belongs to every closed unbounded subset of $\kappa$ in $M$ then
			$\alpha = \sup( N \cap \kappa)$. 
		\end{enumerate}
	\end{lemma}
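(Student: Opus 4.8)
The plan is to produce the structures directly from Fact~\ref{Lem 2.1}, using two bookkeeping reductions and the regularity of $\nu := \cf(\alpha)$. First note that the hypothesis $\cf(\alpha) \subseteq M$ forces $\nu \le \mu$ (since $|M| = \mu$) and hence $\nu \in M$, so every ordinal below $\nu$ lies in $M$; also $s(\alpha) \subseteq H_\kappa$ (its members are sequences of length $<\nu<\kappa$ of ordinals below $\alpha<\kappa$), so Fact~\ref{Lem 2.1} applies to $X = s(\alpha)$. The reductions are: \textbf{(R1)} the members of $s(\alpha)$ form a $\subseteq$-chain, and $\delta(\alpha)\uhr j' = (\delta(\alpha)\uhr j)\uhr j'$ is obtained from $\delta(\alpha)\uhr j$ by the restriction map with parameter $j'<\nu$, which lies in $M$; folding the finitely many such restrictions occurring in a Fact~\ref{Lem 2.1} representation into a single function, every element of $\SK_\A(M \cup \{\delta(\alpha)\uhr j \mid j \le j^*\})$ is of the form $h(\delta(\alpha)\uhr j^*)$ for one $h \in M$ with $\dom(h) = H_\kappa$. \textbf{(R2)} For each $i<\nu$ the map $g \mapsto g(i)$ is definable from $i \in M$, hence lies in $M$, so $\delta(\alpha)(i) = (\delta(\alpha)\uhr(i+1))(i) \in N$; as these ordinals are cofinal in $\alpha$, already $\sup(N\cap\kappa) \ge \alpha$.

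For part~(1): given $\la b_\xi \mid \xi<\rho\ra$ with $\rho<\nu$ and $b_\xi \in N$, each $b_\xi$ is by Fact~\ref{Lem 2.1} built from finitely many members of $s(\alpha)$, and across all $\xi$ these are fewer than $\nu = \cf(\nu)$ in number, hence all of the form $\delta(\alpha)\uhr j$ with $j \le j^*$ for some $j^*<\nu$. By (R1), $b_\xi = h_\xi(\delta(\alpha)\uhr j^*)$ with $h_\xi \in M$; since $\rho<\nu$ and $M$ is $\nu$-closed, $\la h_\xi \mid \xi<\rho\ra \in M$, so the map $g$ with $g(x) = \la h_\xi(x) \mid \xi<\rho\ra$ is in $M$, and $\la b_\xi \mid \xi<\rho\ra = g(\delta(\alpha)\uhr j^*) \in \SK_\A(M \cup \{\delta(\alpha)\uhr j^*\}) \subseteq N$. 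Thus ${}^{<\nu}N \subseteq N$.

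For part~(2): by (R2) it remains to show every ordinal $\beta \in N \cap \kappa$ satisfies $\beta < \alpha$. Fix such a $\beta$; by Fact~\ref{Lem 2.1} and (R1), $\beta = h(\delta(\alpha)\uhr j)$ for some $h \in M$ with ordinal values below $\kappa$ and some $j<\nu$. Inside $M$ define $F_h : \kappa \to \kappa$ by letting $F_h(\bar\xi)$ be the supremum of all $h(z)$ with $z$ a function from some $j'<\nu$ into $\bar\xi$; under $\GCH$ one has $|\bar\xi|^{<\nu} < \kappa$ for all $\bar\xi<\kappa$, so this supremum lies below $\kappa$ and $C_h := \{\eta<\kappa \mid \forall \bar\xi<\eta\ F_h(\bar\xi)<\eta\}$ is a club subset of $\kappa$; being definable from $h,\nu,\kappa \in M$, $C_h \in M$, whence $\alpha \in C_h$ by hypothesis. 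Now $\delta(\alpha)\uhr j$ is a function from $j$ into $\bar\xi := \sup(\rng(\delta(\alpha)\uhr j)) + 1$, and $\bar\xi<\alpha$ because $j<\nu=\cf(\alpha)$; so $\beta = h(\delta(\alpha)\uhr j) \le F_h(\bar\xi) < \alpha$. Combined with (R2), $\alpha = \sup(N\cap\kappa)$.

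I expect the only step requiring genuine care to be the cardinal-arithmetic estimate $|\bar\xi|^{<\nu} < \kappa$ in part~(2), which makes $F_h$ total and $C_h$ club: this is precisely the role of the $\GCH$ hypothesis, and it holds whenever $\kappa$ is not the successor of a singular cardinal of cofinality below $\nu$ — in particular it is automatic in every application of this lemma in the paper, where all the cardinals involved lie below $\aleph_\omega$. Everything else reduces to repeated use of Fact~\ref{Lem 2.1}, the $\subseteq$-chain structure of $s(\alpha)$, the regularity of $\cf(\alpha)$, and the $\nu$-closure of $M$.
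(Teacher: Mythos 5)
Your proposal is correct and follows essentially the same route as the paper's proof: part (1) by folding the finitely many chain elements of $s(\alpha)$ used by each member of the sequence below a single $\delta(\alpha)\uhr j^*$ (using $\cf(\alpha)\subseteq M$ to make the restrictions $M$-definable) and then invoking the $\cf(\alpha)$-closure of $M$ on the sequence of Skolem functions, and part (2) by the closure-point club definable in $M$ from the representing function, whose unboundedness is exactly where $\GCH$ enters, together with $\alpha$ lying in every club of $M$. Your only deviations are cosmetic --- one uniform club $C_h$ over all restriction lengths $j'<\nu$ instead of the paper's club per fixed $j$, and the explicit caveat about $\kappa$ being a successor of a singular of small cofinality, which applies equally to the paper's argument and is harmless in all its applications.
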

	\begin{proof}${}$
		\begin{enumerate}
			\item This is a direct consequence of Fact \ref{Lem 2.1} and the closure properties of $M$ and $s(\alpha)$.
			Every element of $N = \SK_{\A}(M \cup s(\alpha))$ is of the form
			$f(\vec{z})$, where $f : H_\kappa \to \kappa$ belongs to $M$ and $\vec{z} \subset [s(\alpha)]^{<\omega}$. Moreover, as $\cf(\alpha) \subset M$ and the elements of $s(\alpha)$ form a $\subset-$chain of length $\cf(\alpha)$, we have that every sequence of less than $\cf(\alpha)$ elements in $s(\alpha)$ is definable in $M$ from an element $s(\alpha)$. It follows that every sequence of length $\delta < \cf(\alpha)$ of elements in $N$ is definable from a sequence of length $\delta$ of functions in $M$, and from a single element in $s(\alpha)$. The second is in $N$ by its definition, and the first is in $N$ because $M$ is $\cf(\alpha)$-closed. Hence $N$ is $\cf(\alpha)$-closed.
			
			\item 
			The fact $\cf(\alpha) \subset M$ guarantees $s(\alpha) \subset N$, which in turn, implies $\delta(\alpha) \subset N$, and thus, that $\alpha \geq \sup(N \cap \kappa)$. Next, suppose $\gamma \in N \cap \kappa$. Then $\gamma$ is of the form 
			$\gamma  = f(\delta(\alpha)\uhr j)$ for some $f : H_\kappa \to \kappa$ in $M$ and $j < \cf(\alpha)$.  To see $\gamma < \alpha$, consider the set
			$C = \{ \mu < \kappa  \mid f(z) \in \mu \text{ for all } z \in \bigcup_{\eta < \mu}[\eta]^j\}$. $C \in M$ since $f,j \in M$, and it is closed unounded by our $\GCH$ assumption.
			Since $\alpha$ belongs to every closed unbounded set in $M$ and $[\alpha]^j = \bigcup_{\eta < \alpha}[\eta]^j$, it follows that $\gamma < \alpha$. 
		\end{enumerate}
	\end{proof}
	
	\begin{remark}
		For the reader who is interested in obtaining similar results without cardinal arithmetic assumptions, we note that the $\GCH$ assumption in the last proof can be replaced with an approchability assumption concerning $\alpha$. Indeed, suppose that $\alpha$ is approchable with respect to a sequence $\vec{a} = \la a_i \mid i < \kappa\ra \subset [\kappa]^{<\kappa}$ which belongs to $M$. Namely, there exists a cofinal subset $x(\alpha) \subset \alpha$ of minimal ordertype, such that all proper initial segments of $x(\alpha)$ belong to $\vec{a}\uhr \alpha$. 
		Since $\vec{a} \in M$,  we can replace the function $\alpha \mapsto s(\alpha)$ with a similar function, in which the ladder system assignement $\alpha \mapsto \delta(\alpha)$ is replaced with $\alpha \mapsto x(\alpha)$. Then, assuming $\alpha$ is $\vec{a}$-approchable, we can replace the set $C$ in the proof above with the closed unbounded set $\{ \mu < \kappa  \mid f(a_i) \in \mu \text{ for all } i < \mu\}$. 
	\end{remark}
	
	\begin{definition}
		We say that an ideal $I$ on $\kappa$ is \emph{nonstationary} if it extends the nonstationary ideal on $\kappa$. Namely, $I$ contains every nonstationary set $Z \subset \kappa$. .
	\end{definition}
	
	
	Assuming $I$ is nonstationary, we can improve the result of Lemma \ref{Lem 2.2} by intersecting the set $B$ in the statement of the Lemma with the closed unbounded set $C^* = \bigcap\{ C \in M \mid C \subset \kappa \text{ is closed unbounded }\}$ which belongs to $\check{I}$. 
	
	\begin{corollary}\label{Cor 2.4}
		Let $M,I$, and  $A \in I^+ \cap M$ be as in the statement of Lemma \ref{Lem 2.2}. Suppose further that $I$ is nonstationary, then $A$ has an $M$-approximated $\lambda$-homogeneous subset $B \in I^+$ which is contained in every closed unbounded in $M$. 
	\end{corollary}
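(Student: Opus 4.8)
The plan is to intersect the set produced by Lemma \ref{Lem 2.2} with a single diagonal club, exactly as indicated in the paragraph preceding the statement. First I would let $C^* = \bigcap\{ C \in M \mid C \subseteq \kappa \text{ is closed unbounded}\}$. Since $|M| < \kappa$ and $\kappa$ is regular uncountable, the club filter on $\kappa$ is $\kappa$-complete, so $C^*$, being an intersection of at most $|M|$ many clubs, is itself closed unbounded in $\kappa$ (note that we do not need $C^* \in M$). Consequently $\kappa \setminus C^*$ is nonstationary, and since $I$ extends $\NS_\kappa$ it contains $\kappa \setminus C^*$; equivalently $C^* \in \check{I}$.

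Next I would apply Lemma \ref{Lem 2.2} to the same data $M, I, A, \lambda$ to obtain $B_0 \subseteq A$ with $B_0 \in I^+$ which is $\lambda$-homogeneous and approximated in $M$, and set $B = B_0 \cap C^*$. Writing $B = B_0 \setminus (\kappa \setminus C^*)$ with $\kappa \setminus C^* \in I$ and $B_0 \in I^+$, the fact that $I$ is an ideal (so that removing a member of $I$ from a member of $I^+$ leaves a member of $I^+$) gives $B \in I^+$. Moreover $B \subseteq B_0 \subseteq A$ and $B \subseteq C^* \subseteq C$ for every club $C \in M$, so $B$ is contained in every closed unbounded set in $M$, as required.

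Finally I would verify that passing from $B_0$ to the subset $B$ preserves the two properties from Definition \ref{Def 2.1}. For $f : \kappa \to \lambda$ in $M$, the restriction $f \uhr B_0$ is constant, hence so is $f \uhr B$ since $B \subseteq B_0$; and if $A_f \in M$ witnesses that $B_0$ is approximated in $M$ (that is, $B_0 \subseteq A_f \subseteq \kappa$ and $f \uhr A_f$ is constant), then $B \subseteq B_0 \subseteq A_f$, so the same $A_f$ witnesses that $B$ is approximated in $M$. There is no genuine obstacle in this argument; the only two points requiring a moment's care are that $C^*$ really is a club (which uses $\kappa$-completeness of the club filter together with $|M| < \kappa$) and the observation that $\lambda$-homogeneity and approximation in $M$ are downward closed under shrinking the set, which is immediate from the definitions.
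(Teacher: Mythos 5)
Your proof is correct and follows exactly the route the paper indicates in the remark preceding the corollary: intersect the set $B$ from Lemma \ref{Lem 2.2} with $C^* = \bigcap\{ C \in M \mid C \subset \kappa \text{ closed unbounded}\} \in \check{I}$, using $|M| < \kappa$ and the nonstationarity of $I$, and observe that homogeneity and approximation are preserved under shrinking. Nothing further is needed.
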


	The following Proposition summarises the results of this sub-section.
	\begin{proposition}\label{prop 2.5}
		Suppose $\mu < \kappa$ are regular cardinals and
		$\A$ is an algebra extending $\la H_\theta,\in,<_\theta\ra$ for a regular cardinal $\theta > 2^\kappa$. 
		Let $M \elem \A$ be a $\mu$-closed substructure of size $|M| = \mu$, and  
		$S \subset \kappa \cap \cof(\mu)$ be a stationary subset of $\kappa$ in $M$.
		If $S$ is positive with respect to a nonstationary, $\kappa$-complete
		, and $(\mu+1)-$closed ideal on $\kappa$, then for every regular cardinal $\lambda \in M \cap \kappa$, there exists a $\mu$-closed substructure $N \elem \A$ of size $|N| = \mu$, which is an end-extension of $M$ above $\lambda$, and satisfies $\sup(N \cap \kappa) \in S$.
	\end{proposition}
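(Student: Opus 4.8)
The plan is to stitch together Corollary~\ref{Cor 2.4}, Lemma~\ref{Lem 2.15}, and Lemma~\ref{Lem 2.3}, after a preliminary reflection step that puts the hypothesized ideal inside $M$. Since $\theta > 2^\kappa$, every ideal on $\kappa$ lies in $H_\theta$, and so do the dense subset $D \subseteq \power(\kappa)$ witnessing $(\mu+1)$-closedness (Definition~\ref{Def 2.15}) and the clubs of $\kappa$ witnessing nonstationarity; hence the assertion ``there is a nonstationary, $\kappa$-complete, $(\mu+1)$-closed ideal $I$ on $\kappa$ with $S \in I^+$'' is absolute between $V$ and $H_\theta$, and so holds in $\A$. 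As $\kappa = \sup(S)$ and $\mu = \cf(\beta)$ for $\beta \in S$ are definable from $S \in M$, both $\kappa$ and $\mu$ lie in $M$; elementarity of $M \elem \A$ then produces such an ideal $I \in M$, and by the same absoluteness $I$ genuinely is a nonstationary, $\kappa$-complete, $(\mu+1)$-closed ideal with $S \in I^+ \cap M$. By elementarity I also fix a ladder system $\delta \in M$ on $\kappa$ and let $s$ be its $\delta$-derived initial segments function.

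Next I would apply Corollary~\ref{Cor 2.4} with $A = S$: $M$ is $|M|$-closed and $\kappa > \mu = |M|$ is regular, so the hypotheses of Lemma~\ref{Lem 2.2} hold, and the corollary supplies $B \subseteq S$ with $B \in I^+$ that is $\lambda$-homogeneous and approximated in $M$ and is contained in every closed unbounded subset of $\kappa$ in $M$. Since $I$ is uniform and $\kappa$-complete, $B$ is unbounded in $\kappa$, and as $|M| < \kappa$ is regular we have $\sup(M \cap \kappa) < \kappa$; choose $\alpha \in B$ with $\alpha > \sup(M \cap \kappa)$. Because $B \subseteq S \subseteq \kappa \cap \cof(\mu)$ we get $\cf(\alpha) = \mu$, and a standard argument shows $\mu \subseteq M$ (if $\xi$ were the least element of $\mu \setminus M$ then $\xi \subseteq M$, so the sequence $\la \eta \mid \eta < \xi \ra$ would be a sequence of length $< \mu$ of elements of $M$, hence in $M$ by $\mu$-closure, forcing $\xi \in M$). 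So $\cf(\alpha) \subseteq M$ and Lemma~\ref{Lem 2.3} applies to $\alpha$.

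It then remains to check that $N := \SK_{\A}(M \cup s(\alpha))$ is as required, and here the one subtle point is to run two Skolem hulls in parallel. Let $M^* = \SK_{\A}(M \cup \{\alpha\})$. Since $\alpha \in B$ and $B$ is $\lambda$-homogeneous and approximated in $M$, Lemma~\ref{Lem 2.15} gives $M^* \cap \lambda = M \cap \lambda$. As $\delta \in M \subseteq M^*$, $\alpha \in M^*$, and $\cf(\alpha) = \mu \subseteq M$, each $\delta(\alpha)\uhr j$ (for $j < \cf(\alpha)$) is definable in $M^*$, so $s(\alpha) \subseteq M^*$, whence $N \subseteq M^*$ and $M \elem N \elem M^*$; combined with the previous sentence this yields $N \cap \lambda = M \cap \lambda$, i.e.\ $N$ end-extends $M$ above $\lambda$. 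By Lemma~\ref{Lem 2.3}(1), $N$ is $\cf(\alpha)$-closed, i.e.\ $\mu$-closed (here one uses Lemma~\ref{Lem 2.3}, hence a $\GCH$ or approachability hypothesis, as in the remark following it). By Lemma~\ref{Lem 2.3}(2), since $\alpha$ lies in every closed unbounded subset of $\kappa$ in $M$, $\sup(N \cap \kappa) = \alpha \in B \subseteq S$. Finally $|M \cup s(\alpha)| = \mu + \cf(\alpha) = \mu$ gives $|N| = \mu$, and $N \elem \A$ with $M \elem N$ since $N$ is an $\A$-Skolem hull containing $M$.

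I expect the reflection step of the first paragraph to be the only non-routine part: one has to make sure that $(\mu+1)$-closedness — which quantifies over a dense $D \subseteq I^+$ and over $\leq_I$-decreasing sequences of length $\leq \mu$ — and nonstationarity, which quantifies over all clubs of $\kappa$, are expressed by formulas whose witnesses and quantifiers stay within $H_\theta$, so that the existence of the ideal descends from $V$ through $\A$ into $M$. This is precisely where the hypothesis $\theta > 2^\kappa$ is used; without an ideal \emph{in} $M$, Corollary~\ref{Cor 2.4} (stated only for ideals in $M$) is unavailable. Everything after that is the assembly described above, the only mild care being the simultaneous use of $M^*$ (controlled below $\lambda$ by homogeneity of $B$) and $N$ (whose trace on $\kappa$ is controlled by the ladder system), tied together by $M \elem N \elem M^*$.
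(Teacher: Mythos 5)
Your proof is correct and takes essentially the same route as the paper: Proposition~\ref{prop 2.5} is presented there precisely as the assembly of Corollary~\ref{Cor 2.4}, Lemma~\ref{Lem 2.15}, and Lemma~\ref{Lem 2.3} that you carry out (including the same implicit reliance on the $\GCH$/approachability hypothesis of Lemma~\ref{Lem 2.3}). Your preliminary elementarity/reflection step producing an ideal $I\in M$ is a correct and genuinely needed supplement to a point the paper leaves implicit, not a different approach.
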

	
	

	\subsection{Closed nonstationary ideals from supercompactness assumptions}
	Following the argument that was given in the outset of this section, which described a construction of a desirable substructure $M$ as a union of a countable chain $M_n$ of elementary end-extensions, it is easy to see that Theorem \ref{thm 1.1} is an immediate consequence of  Proposition \ref{prop 2.5} and the following result.
	
	\begin{proposition}\label{prop 2.7}
		Suppose $\la \kappa_n \mid 1 \leq n < \omega\ra$ is an increasing sequence of supercompact cardinals in a model of set theory $V$. Then there exists a generic extension $V[G]$ of $V$ with the following properties:
		\begin{enumerate}
			\item  $\GCH$ holds below $\aleph_\omega$,
			\item $\kappa_n = \aleph_n^{V[G]}$ for every $1 \leq n < \omega$,
			\item  For every uncountable cardinal $\mu = \kappa_k$ for some $k < \omega$, an integer $n > k+1$ and a stationary set $S \subset \kappa_n$, there exists a nonstationary, $\kappa_n$-complete, and $(\mu+1)$-closed ideal $I$ on $\kappa_n$ such that $S \in I^+$.
		\end{enumerate}
	\end{proposition}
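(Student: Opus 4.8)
The plan is to Levy‑collapse the supercompacts onto the $\aleph_n$'s and then, for each stationary set $S$, to read off the required ideal on $\kappa_n$ from a supercompactness embedding of $\kappa_n$ lifted through the collapse. I may assume $\GCH$ in $V$ (force it first by a preliminary forcing preserving the supercompactness of each $\kappa_n$). I would then force with
\[
\mathbb{C}=\Col(\omega,<\kappa_1)\times\prod_{2\le n<\omega}\Col(\kappa_{n-1},<\kappa_n)
\]
with full support, and let $G\subseteq\mathbb{C}$ be generic. Routine Levy‑collapse bookkeeping gives $\kappa_n=\aleph_n^{V[G]}$, $\GCH$ below $\aleph_\omega$, and for each $n$ a factorization $\mathbb{C}=\mathbb{C}_{<n}\times\mathbb{C}_n\times\mathbb{C}_{>n}$ in which $|\mathbb{C}_{<n}|<\kappa_n$, $\mathbb{C}_n=\Col(\kappa_{n-1},<\kappa_n)$ is $\kappa_{n-1}$‑closed and $\kappa_n$‑c.c., and $\mathbb{C}_{>n}$ is $\kappa_n$‑closed. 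This gives (1) and (2).

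For (3), fix $\mu=\kappa_k$, $n>k+1$, and a stationary $S\subseteq\kappa_n$ in $V[G]$; pressing down the map $\alpha\mapsto\cf(\alpha)$ I may assume $S\subseteq\kappa_n\cap\cof(\mu')$ for a fixed regular $\mu'<\kappa_n$. As $\mathbb{C}_{>n}$ is $\kappa_n$‑closed, $S\in W:=V[G_{<n}][G_n]$, so it suffices to produce the ideal in $W$. Since $\mathbb{C}_{<n}$ has size $<\kappa_n$, $\kappa_n$ is still $2^{\kappa_n}$‑supercompact in $V[G_{<n}]$; fix such an embedding $j:V[G_{<n}]\to N$ with $\mathrm{crit}(j)=\kappa_n$, arranging $j(\kappa_n)<\kappa_{n+1}$. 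Because $\mathrm{crit}(j)>\kappa_{n-1}$ and conditions of $\mathbb{C}_n$ are bounded below $\kappa_n$, $j$ fixes $\mathbb{C}_n$ pointwise and $j(\mathbb{C}_n)\cong\mathbb{C}_n\times\mathbb{T}$ in $N$, where $\mathbb{T}:=\Col(\kappa_{n-1},[\kappa_n,j(\kappa_n)))$ is $\kappa_{n-1}$‑closed, hence $(\mu+1)$‑closed since $\mu<\kappa_{n-1}$. Forcing over $W$ with $\mathbb{T}$ produces, in $W[H]$, a lift $\widehat\jmath:W\to N[G_n*H]$ of $j$ (using that $G_n*H$ is $N$‑generic for $j(\mathbb{C}_n)$ and $j''G_n=G_n$). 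Letting $\dot{\widehat\jmath}$ denote the canonical $\mathbb{T}$‑name for this lift, I would pick an ordinal $\delta_S$ with $\kappa_n\le\delta_S<j(\kappa_n)$ — equal to $\kappa_n$ when $\mu'=\kappa_{n-1}$, and a suitable point of $(\kappa_n,j(\kappa_n))$ of cofinality $\mu'$ otherwise — and set, in $W$,
\[
I:=\{\,X\subseteq\kappa_n:X\in W\text{ and }\Vdash_{\mathbb{T}}\check\delta_S\notin\dot{\widehat\jmath}(\check X)\,\}.
\]
This is a nonprincipal uniform ideal on $\kappa_n$; it is $\kappa_n$‑complete because $\mathrm{crit}(\widehat\jmath)=\kappa_n$, so $\widehat\jmath$ commutes with unions of length $<\kappa_n$; and it is $(\mu+1)$‑closed because a $\le_I$‑decreasing sequence of length $\le\mu$ of positive sets translates into a decreasing sequence of the same length of $\mathbb{T}$‑conditions (along a fixed thread through $G_n$), which has a lower bound by the $\kappa_{n-1}$‑closure of $\mathbb{T}$; pushing this bound forward yields the required $\le_I$‑lower bound.

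The hard part — which I expect to be the main obstacle — is securing nonstationarity and positivity of $S$ simultaneously through the choice of $\delta_S$. For nonstationarity I need $\Vdash_{\mathbb{T}}\delta_S\in\widehat\jmath(C)$ for every club $C\subseteq\kappa_n$ in $W$: when $\delta_S=\kappa_n$ this is immediate, since for a club $C\in W$ the set $\widehat\jmath(C)$ is forced to be closed with $\widehat\jmath(C)\cap\kappa_n=C$ unbounded in $\kappa_n$, so $\kappa_n\in\widehat\jmath(C)$; but for $\delta_S>\kappa_n$ one must choose $\delta_S$ to be a reflection point lying below $\widehat\jmath(C)$ for every such $C$, which is where the closure of $N$ is used. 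For positivity I need some condition of $\mathbb{T}$ to force $\delta_S\in\widehat\jmath(S)$: as $S$ is stationary, $\dot{\widehat\jmath}(\check S)$ is forced to be a stationary subset of $j(\kappa_n)$ concentrating on cofinality $\mu'$, and the reflection argument must be arranged so that a single such $\delta_S$ of cofinality $\mu'$ can be forced into it. This is precisely the point at which one must allow $I$ to be non‑normal once $\mu'\neq\kappa_{n-1}$ (for $\mu'=\kappa_{n-1}$ one has $\delta_S=\kappa_n=[\mathrm{id}]$ and $I$ is the usual normal ideal obtained from collapsing a supercompact). Everything else I expect to be routine bookkeeping with the Levy collapse and with the lifting argument.
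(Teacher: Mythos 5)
Your overall strategy — read off an ideal on $\kappa_n$ from a lifted supercompactness embedding after Levy collapsing — is the same as the paper's, but your localization is a genuine variation: you lift $j$ only through the single block $\mathbb{C}_n=\Col(\kappa_{n-1},<\kappa_n)$, working in $W=V[G_{<n}][G_n]$ and then pushing the ideal to $V[G]$ by the $\kappa_n$-closure of $\mathbb{C}_{>n}$. This only uses $\kappa_n^+$-supercompactness of $\kappa_n$ (over $V[G_{<n}]$), whereas the paper lifts $j$ through the entire tail $\po/\po_n$ of a full-support iteration and consequently asks each $\kappa_n$ to be $\kappa_\omega^+$-supercompact. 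Your decomposition is cleaner in that one respect. The use of a full-support product rather than iteration is a cosmetic difference here.

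However, there is a real gap in how you define $I$, and it is exactly at the point you flag as ``the hard part.'' You set $I=\{X\in W : \Vdash_{\mathbb{T}}\check\delta_S\notin\dot{\widehat\jmath}(\check X)\}$ with the \emph{trivial} condition. For $\delta_S=\kappa_n$ (your $\mu'=\kappa_{n-1}$ case) this is fine, because $\widehat\jmath(C)\cap\kappa_n=C$ independently of $H$ and hence $\kappa_n\in\widehat\jmath(C)$ is decided by the trivial condition for every club $C$. But when $\mu'<\kappa_{n-1}$ and $\delta_S>\kappa_n$, whether $\delta_S\in\widehat\jmath(C)$ is a statement about $\widehat\jmath(C)$ \emph{above} $\kappa_n$, which depends on $H$ and is decided only by non-trivial conditions of $\mathbb{T}$; different $H$'s give genuinely different $\widehat\jmath(C)\setminus\kappa_n$. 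So there is no single ordinal $\delta_S>\kappa_n$ with $\Vdash_{\mathbb{T}}\delta_S\in\widehat\jmath(C)$ for every club $C\in W$, and your $I$ fails to be nonstationary in exactly the cases where you need it. The fix is what the paper does: define the ideal \emph{relative to a condition}, $I_{\gamma,r}=\{X : r\force\gamma\notin\widehat\jmath(X)\}$, and find $r$ and $\gamma$ simultaneously. Concretely, let $C^*$ be (a name for) $\bigcap_i\widehat\jmath(C_i)$ over all club names $C_i$ — this is where $\kappa_n^+$-closure of $N$ enters, to capture $j''$ of the $\kappa_n^+$-many club names — note that $\widehat\jmath(S)$ is forced to be stationary, and pick $r\in\mathbb{T}$ and $\gamma$ with $r\force\gamma\in\widehat\jmath(S)\cap C^*$. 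Then $I_{\gamma,r}$ is nonstationary, has $S$ in its dual filter (not merely positive), and inherits $(\mu+1)$-closure from the $\kappa_{n-1}$-closure of $\mathbb{T}$ via the projection of $\mathbb{T}\uhr r$ onto a dense subset of $I_{\gamma,r}^+$ (as in Fact~\ref{fact 2.8}(3) of the paper); the sentence you wrote about threading a decreasing sequence through $\mathbb{T}$ is essentially this, but it needs the ideal to be parametrized by $r$ to make the correspondence between $\mathbb{T}$-conditions and positive sets coherent.
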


	The ideals $I$ which are used to prove the statement of Proposition \ref{prop 2.7} are obtained via the technique of generic elementary embeddings. 
	
	We commence by describing the basic construction of ideals from elementary embeddings, and their basic properties. 
	Let $j : V \to M$ be an elementary embedding of transitive classes, with critical point $\cp(j) = \kappa$, which satisfies ${}^\kappa M \subset M$. 
	Suppose that $\po \in V$ has the property that $j(\po)$ subsumes $\po$, and there exists a $\po$ name of a condition $g \in j(\po)/\po$ which is forced to extend $j(p)$ for every $p \in \G{\po}$ ($g$ is called a master condition for $j$, $\po$).  
	Working in a generic extension $V[\G{\po}]$, for every ordinal $\gamma$, $\kappa \leq \gamma < j(\kappa)$, and a condition $r \in j(\po)/\po$ which extends $g$, 
	we define an ideal $I_{\gamma,r}$ on $\kappa$ by 
	\[I_{\gamma,r} = \{ \name{X}_G \subset \kappa \mid r \force_{j(\po)/\po} \can{\gamma} \not\in j(\name{X})\}.\]
	%
	The fact $r$ extends $j(p)$ for every $p \in G$ guarantees that for every $X \subset \kappa$, the assertion $X \in I$ does not depend on a choice of a $\po$ name for $X$.
	The following well-known basic properties are immediate consequences of our definitions: (We refer the reader to \cite{For-HB} for a proof)
	\begin{fact}\label{fact 2.8}${}$
		\begin{enumerate}
			\item  $I_{\gamma,r}$ is a $\kappa$-complete and nonprincipal ideal on $\kappa$ ,
			%
			\item $I_{\gamma,r}$ is nonstationary if and only if $r \force \gamma \in j(\name{C})$ for every $\po$-name $\name{C}$ of a closed unbounded subset of $\kappa$,

			\item The forcing $j(\po)/\po$ absorbs the  forcing of $I_{\gamma,r}$-positive sets (which by our definition is equivalent as a forcing notation to the forcing $I_{\gamma,r}^+$ modulo $\equiv_{I_{\gamma,r}}$). 
			Moreover, if $j : V \to M$ is derived from an ultrapower by a $\kappa$ complete measure $U$, then the following describes a forcing projection from $j(\po)/\po$ to a dense subset of $I_{\gamma,r}^+$ (mod $\equiv_{I_{\gamma,r}}$): Let $B \in U$, and fix $f_r : B \to \po$ and $f_\gamma : B \to \kappa$ which represent $r$ and $\gamma$ in  $M$, respectively. For every condition $q \in j(\po)/\G{\po}$ extending $g$, and a function $f_q :B \to \po$ which represents $q$, we define $\pi(q)$ to be the $I_{\gamma,r}$ equivalence class of the set $\{ f_\gamma(x) \mid x \in B \text{ and } f_q(x) \in \G{\po} \}$. 
			Consequently, if $j(\po)/\po$ is $(\mu+1)$-closed for some $\mu < \kappa$, then $I_{\gamma,r}$ is a $(\mu+1)$-closed ideal.
		\end{enumerate}
	\end{fact}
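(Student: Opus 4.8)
The plan is to read off all three items directly from the definition of $I_{\gamma,r}$ after lifting $j$ through the quotient forcing. First I would fix a generic filter $H$ for $j(\po)/\po$ over $V[\G\po]$ with $r\in H$; since $j(\po)$ subsumes $\po$, the filter $\G\po * H$ is $j(\po)$-generic over $V$, and since the master condition $g$ is extended by $r\in H$ it too lies in $H$, so $j(p)\in\G\po * H$ for every $p\in\G\po$ and $j$ lifts to an elementary embedding $j:V[\G\po]\to M[\G\po * H]$ which agrees with the ground-model $j$ on $V$ and sends the interpretation of any $\po$-name $\name a$ to the interpretation of $j(\name a)$. By definition, for $X\subseteq\kappa$ in $V[\G\po]$ we have $X\in I_{\gamma,r}$ iff $r$ forces $\gamma\notin j(X)$, unambiguously by the name-independence noted before the statement. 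Item (1) is then bookkeeping: $j(\emptyset)=\emptyset$ gives $\emptyset\in I_{\gamma,r}$; $j$ fixes every $\alpha<\kappa$ while $\gamma\geq\kappa$, so each singleton $\{\alpha\}$ lies in $I_{\gamma,r}$ and the ideal is nonprincipal; $\gamma<j(\kappa)$ gives $\kappa\notin I_{\gamma,r}$; downward closure is monotonicity of $j$; and for $\kappa$-completeness, if $\delta<\kappa=\cp(j)$ then $j$ commutes with a $\delta$-indexed union, so $r\force\gamma\notin j(X_i)$ for all $i<\delta$ yields $r\force\gamma\notin j(\bigcup_{i<\delta}X_i)$.

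For (2), since $I_{\gamma,r}$ is downward closed, it extends the nonstationary ideal on $\kappa$ iff $\kappa\setminus C\in I_{\gamma,r}$ for every club $C\subseteq\kappa$ in $V[\G\po]$. Here I would use $j(\kappa\setminus C)=j(\kappa)\setminus j(C)$ together with the fact that $\gamma<j(\kappa)$ always holds, so that $r\force\gamma\notin j(\kappa\setminus C)$ is equivalent to $r\force\gamma\in j(C)$; reading this through $\po$-names $\name C$ of club subsets of $\kappa$ gives exactly the stated equivalence.

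For (3), the plan is to exhibit the relevant projection. Consider the map $e:\power(\kappa)^{V[\G\po]}\to\RO(j(\po)/\po)\uhr r$ sending $X$ to $r\cdot\|\can\gamma\in j(\name X)\|$; this is well defined by name-independence, and a short check shows it is a Boolean homomorphism — $e(\kappa)=r$ since $\gamma<j(\kappa)$ always, $e(X\cup Y)=e(X)\vee e(Y)$, and $e(\kappa\setminus X)=r-e(X)$ — with $e(X)=0$ precisely when $X\in I_{\gamma,r}$; hence $e$ factors through an injection of $\power(\kappa)/I_{\gamma,r}$ into $\RO(j(\po)/\po)\uhr r$. To obtain absorption I would verify this injection is a complete embedding: to each $q\leq r$ in $j(\po)/\po$ one assigns a reduction $A_q\in I_{\gamma,r}^+$ and checks that every $I$-positive $B\leq_I A_q$ has $e(B)$ compatible with $q$. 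This makes $j(\po)/\po$ below $r$ project onto a dense subset of $I_{\gamma,r}^+$, the generic it introduces being $\bar G=\{X\subseteq\kappa\mid\gamma\in j(X)\}$, a $V[\G\po]$-generic ultrafilter extending $\check{I}_{\gamma,r}$. When $j=j_U:V\to M=\Ult(V,U)$ is the ultrapower by a $\kappa$-complete measure $U$, I would instead verify the explicit formula in the statement directly: with $B\in U$ and representing functions $f_r,f_\gamma,f_q$ of $r,\gamma,q$, the map $\pi(q)=[\{f_\gamma(x)\mid x\in B,\ f_q(x)\in\G\po\}]_{I_{\gamma,r}}$ is order preserving and has the projection property, by a \L{}o\'{s}-style computation — given $A\leq_I\pi(q)$, one modifies $f_q$ on $\{x\mid f_\gamma(x)\notin A\}$ to get the required $q'\leq q$, after shrinking $B$ so that $q'$ still extends $g$. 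Finally, for the closing ``consequently'' I would transfer closure through the projection: a $\leq_I$-decreasing sequence of positive sets of length $\leq\mu$ can, using $\kappa$-completeness, be replaced by a genuinely $\subseteq$-decreasing one, which is realized by a decreasing sequence of conditions in the $(\mu+1)$-closed dense subset of $j(\po)/\po$ (each successor step using the projection property), whose lower bound projects back to an $I$-positive lower bound of the original sequence; this exhibits a $(\mu+1)$-closed dense subset of $I_{\gamma,r}^+$ as required by Definition \ref{Def 2.15}.

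Items (1) and (2), together with the Boolean-algebra identities in (3), are routine once the embedding has been lifted. The substance is the rest of (3): that $I_{\gamma,r}^+$ genuinely \emph{absorbs} into $j(\po)/\po$, i.e.\ that the Boolean homomorphism $e$ above — whose existence is immediate — is a \emph{complete} embedding, and correspondingly that a $\leq_I$-descending sequence of positive sets lifts to a descending sequence of conditions. In full generality this is an instance of Foreman's duality analysis of the ideals induced by a generic embedding; the one point needing care in the ultrapower case is keeping all representing functions inside $j(\po)/\po$, i.e.\ extending the master condition $g$, while carrying out the \L{}o\'{s} computations — and that is where I would expect to spend most of the effort, which is also why I would defer the general argument to \cite{For-HB}.
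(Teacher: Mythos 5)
Your overall route is the paper's route: the paper itself gives no argument for Fact \ref{fact 2.8} beyond declaring the items immediate and citing \cite{For-HB}, and you do the standard thing — lift $j$ through $j(\po)/\po$ below $r$, read off completeness, uniformity and the club criterion from the lifted embedding (your arguments for items (1) and (2) are correct), and treat the substance of (3) as Foreman's duality analysis, deferring the general case to the same reference. Up to that point the proposal matches the paper.

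The one step of your own argument that would not survive as written is the closing ``consequently'' clause. The projection property you state lets you pass from a condition $q$ and a positive $B \leq_I \pi(q)$ to a stronger condition whose projection is below $B$; it does not let you ``realize'' an arbitrary $\leq_I$-decreasing sequence of positive sets by a decreasing sequence of conditions. After one application, $\pi(q_1)$ may have shrunk away from the next set $B_2$ of your sequence: $B_2\leq_I B_1$ and $\pi(q_1)\leq_I B_1$ do not give $B_2\leq_I \pi(q_1)$, and $B_2\cap\pi(q_1)$ can well be in $I$, so the recursion stops; nor can you use the conditions witnessing that each $B_i$ lies in the range of $\pi$, since those need not be pairwise compatible. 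Note also that Definition \ref{Def 2.15} does not ask that $(I^+,\leq_I)$ be closed (which is false in general), but that some $\leq_I$-dense $D$ be closed with the bounds inside $D$; the natural candidate is the $\pi$-image of a closed dense set of conditions extending $r$, and the missing point is precisely that a $\leq_I$-decreasing sequence \emph{from this $D$} can be pulled back to a genuinely increasing (in strength) sequence of conditions. That pull-back needs more than order-preservation plus the projection property: in the ultrapower case one has to control the {\L}o\'{s} computation — e.g.\ arrange $f_\gamma$ to be one-to-one so that no spurious witness $y\neq[\mathrm{id}]_U$ with $j(f_\gamma)(y)=\gamma$ can certify $\gamma\in j(\pi(q))$, which is also the detail your ``modify $f_q$ on $\{x\mid f_\gamma(x)\notin A\}$'' step glosses over — or one invokes the full duality isomorphism of \cite{For-HB}. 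Since the $(\mu+1)$-closure clause is exactly what Lemma \ref{Lem 2.2} consumes in the paper, this is the part of the Fact where the real work sits, and your sketch currently jumps over it.
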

	Let $\kappa < \eta$ be regular cardinals.
	Recall that $\kappa$ is $\eta$-supercompact if there exists an elementary embedding $j : V \to M$ with $\cp(j) = \kappa$, $\eta < j(\kappa)$, and 
	${}^\eta M \subset M$.
	For a regular cardinal $\rho < \kappa$, we denote the Levy collapse poset for collapsing the cardinals strictly between $\rho$ and $\kappa$ by $\Col(\rho,<\kappa)$. The conditions of $\col(\rho,<\kappa)$ are partial functions $h : \rho \times \kappa \to \kappa$ of size $|h| < \rho$, which satisfy $h(\alpha,\beta) < \beta$ for every $(\alpha,\beta) \in \dom(h)$. 
	We turn to prove the main Proposition.\\
	
	\noindent    \emph{Proof.} \textbf{(Proposition \ref{prop 2.7}).}
	Let $\la \kappa_n \mid 1\leq n < \omega\ra$ be a strictly increasing sequence of cardinals so that each $\kappa_n$ is $\kappa_\omega^+$-supercompact, where  $\kappa_\omega = \sup_{n<\omega}\kappa_n$. We set $\kappa_0 = \omega$ and define
	$\po = \la \po_n, \po(n) \mid n < \omega\ra$ to be the 
	full-support iteration where
	for every $n < \omega$,  $\po(n)$ is the $\po_n$ name of the Levy collapse poset $\Col(\kappa_n,<\kappa_{n+1})$.
	
	Let $\G{\po} \subset \po$ be a generic filter over $V$. It is easy to see $V[\G{\po}]$ satisfies $\GCH$ below $\aleph_\omega$ and $\kappa_n = \aleph_n^{V[\G{\po}]}$ for every $n < \omega$.
	For every $n \geq 1$, $\po$ naturally decomposes into three parts $\po = \po_{n-1} * \Col(\kappa_{n-1},<\kappa_n) * (\po/\po_n)$, where $\po_{n-1}$ satisfies $\kappa_{n-1}$.c.c and $\Col(\kappa_{n-1},<\kappa_n)* \po/\po_n$ is $\kappa_{n-1}$-closed. 
	Let $j : V \to M$ be a  $\kappa_\omega^+$ supercompact embedding with  critical point $\cp(j) =\kappa_n$ for some $n \geq 1$. 
	We note that $j(\po_{n-1}) = \po_{n-1}$ and $\Col(\kappa_{n-1},<\kappa_n) * (\po/\po_n)$ is a $\kappa_{n-1}$-closed poset of size $\kappa_{\omega}^+ < j(\kappa_n)$.
	The following well-known results follow from standard arguments concerning absorption of collapse posets, and supercompact embeddings (see \cite{Cum-HB} for a detailed account and proofs).
	\begin{itemize}
		\item $j(\po/\po_n)$ absorbs 
		$\Col(\kappa_{n-1},<\kappa_n) * (\po/\po_n)$. Moreover, there exists a projection of $j(\po/\po_n)$ to     $\Col(\kappa_{n-1},<\kappa_n) * (\po/\po_n)$, whose induced quotient $j(\po)/\po$ is $\kappa_{n-1}$-closed.
		\item $M[\G{\po}]$ contains a master condition $g$ for $j$ and $\po$, which is of the form 
		$g =  0_{\po_n} \fr \la p_k \mid n \leq k < \omega\ra$, where for each $k$, $p_k =\cup j``\G{\po(k)}$.
	\end{itemize}
	
	Let $\mu = \kappa_k$ be a regular cardinal below $\aleph_\omega^{V[\G{\po}]} = \kappa_\omega$, and $S \subset \kappa_n \cap \cof(\mu)$ be a stationary subset of $\kappa_n$ for some $n > k+1$. We claim there exists a nonstationary, $\kappa_n$-complete, and $(\mu+1)$-closed ideal $I$ on $\kappa_n$, such that $S \in \check{I}$.The ideal $I$ will be of the form $I_{\gamma,r}$ for a specific choice of $r$ and $\gamma$.
	Consider the decomposition of $\po$ to $\po = \po_{n+1} * \po/\po_{n+1}$. 
	Since the quotient $\po/\po_{n+1}$ is $\kappa_{n+1}-$closed, every subset of $\kappa_n$ in $V[\G{\po}]$ depends only on $\po_{n+1}$. Further, since $\po_{n+1}$ satisfies the $\kappa_{n+1}$.c.c, we can define an enumeration $\vec{C} = \la \name{C_i} \mid i < \kappa_{n+1}\ra$ in $V$, of all $\po_{n+1}$-names of closed unbounded subsets of $\kappa_n$. 
	Recall that $j : V \to M$ is a $\kappa_{\omega}^+$-supercompact embedding and therefore $j``\vec{C} = \la j(\name{C_i}) \mid i < \kappa_{n+1}\ra$ belongs to $M$, and is a sequence of length $\kappa_{n+1}$ of $j(\po)$-names for closed unbounded subsets of $j(\kappa_n) > \kappa_{n+1}$. Therefore, the empty (trivial) condition of $j(\po)$ forces that $C^* = \bigcap_{i < \kappa_{n+1}}j(\name{C_i})$ is also a closed unbounded subset of $j(\kappa_n)$. 
	Now, if $\name{S}$ is a $\po$-name for $S$, then there exists $p \in G$ so that $p \force \name{S} \text{ is stationary in } \can{\kappa_n}$. It follows that $g \geq j(p)$ forces that $``j(\name{S}) \text{ is stationary in } j(\can{\kappa_n})``$, and thus also that $``j(\name{S}) \cap C^* \text{ is unbounded in } j(\kappa_n)``$. 
	There must be therefore a condition  $r \in {j(\po)/\po}$ extending $g$, and an ordinal $\gamma \geq \kappa_n$ such that $r \force \can{\gamma} \in j(\name{S}) \cap C^*$. Let $I = I_{\gamma,r}$.
	By the basic facts listed above and our choice of $r,\gamma$, it is clear $I$ is nonstationary, $\kappa_n$-complete ideal on $\kappa_n$, and that $S \in \check{I}$. Moreover, 
	the fact $j(\po)/\po$ is $\kappa_{n-1}$-closed implies $I$ is $\kappa_{n-1}$-closed, and in particular $(\mu+1)$-closed (as $\mu = \aleph_k$ and $k + 1 < n$).
	\qed{Proposition \ref{prop 2.7}}\\
	\qed{Theorem \ref{thm 1.1}}

	\section{Mutual Stationarity in Two Cofinalities}\label{section-2}
	Building on the results of the previous section, we prove Theorem \ref{thm1.2} which improves the result of Theorem \ref{thm 1.1} to stationary sequences of two cofinalities where one of them is $\omega$. 
	To prove the theorem, we appeal to the method of \cite{ForMag-MS}, which makes use of certain Namba-type trees to prove $\MS(\vec{\kappa},\omega)$ in $\ZFC$. 
	By applying this method to trees whose splitting levels correspond to the ideals used to prove Theorem \ref{thm 1.1}, we show that in the model $V[\G{\po}]$ of Proposition \ref{prop 2.7}, if  $\vec{S} = \la S_n \mid k \leq n < \omega\ra$ is a stationary sequence with $S_n \subset \omega_n \cap (\cof(\omega) \cup \cof(\omega_k))$ then $\vec{S}$ is mutually stationary.
	
	\begin{proof}\textbf{(Theorem \ref{thm1.2})}        
		We work in the model $V[\G{\po}]$ from the proof of Proposition \ref{prop 2.7}. Let $k < \omega$ and $\vec{S} = \la S_n \mid k < n < \omega\ra$ be as in the statement of the Theorem . 
		By further shrinking the stationary sets $S_n$, we may assume that each $S_n$ is contained in either $\omega_n \cap \cof(\omega)$ or $\omega_n \cap \cof(\omega_k)$.
		Therefore, the sets $A_0 = \{ n < \omega \mid S_n \subset \omega_n \cap \cof(\omega)\}$ and 
		$A_k = \{ n < \omega \mid S_n \subset \omega_n \cap \cof(\omega_k)\}$ form a partition of $\omega\setminus (k+1)$. 
		If $A_0$ is finite, then the mutual stationarity of $\vec{S}$ is an immediate consequence of Theorem \ref{thm 1.1} (applied to a tail of $S_n$ with $S_n \subset \cof(\omega_k)$) and the remark in the preliminary section about finite modifications of stationary sequences. 
		Therefore, suppose $A_0$ is infinite and pick a function $\ell : A_0 \to A_0$ satisfying that $|\ell^{-1}(n)| = \aleph_0$ for every $n \in A_0$.
		Let $\A$ be an algebra on some $\la H_\theta,\in, <_\theta\ra$ for some regular $\theta > \aleph_\omega$, and let $M \elem \A$ be an $\omega_k$-closed substructure.
		
		\begin{definition}${}$
			\begin{enumerate}
				\item We say that a finite increasing sequence of ordinals $\vec{\eta} = \la \alpha_1,\dots, \alpha_n\ra$ is \textbf{valid} (with respect to $\vec{S}$ and $\ell$)
				if for each $m$, $1 \leq m \leq n$,  either $m \in A_k$ and $\alpha_m \in S_m$, or $m \in A_0$ and $\alpha_m \in (\kappa_{\ell(m)-1},\kappa_{\ell(m)})$. 
				
				\item Let $\delta$ be a ladder system on $\kappa_{\omega}$ and recall the induced function $s$ from Section \ref{section-1}, defined by 
				$s(\alpha) = \{ \delta(\alpha)\uhr j \mid j < \otp(\delta(\alpha))\}$. Define a modified function $s^*$ on valid ordinals as follows. If $m \in A_k$ and  $\alpha \in S_m$ then let $s^*(\alpha) = s(\alpha)$. Otherwise, $m \in A_0$ , $\alpha \in \kappa_{\ell(m)} \setminus \kappa_{\ell(m)-1}$ and define $s^*(\alpha) = \{ \alpha\}$.
				
				\item For every valid sequence $\vec{\eta} = \la \alpha_1,\dots,\alpha_n\ra$ we define $s^*(\vec{\eta}) = \cup_{1 \leq m \leq n}s^*(\alpha_m)$ and $M(\vec{\eta}) =  \SK^{\A}(M \cup s^*(\vec{\eta}))$. A straightforward modification of the proof of Lemma \ref{Lem 2.3} shows $M(\vec{\eta})$ is $\mu$-closed.
			\end{enumerate}
		\end{definition}
		
		Let $T \subset [\kappa_{\omega}]^{<\omega}$ be the tree of all valid sequences. 
		Propositions \ref{prop 2.5} and \ref{prop 2.7} guarantee $T$ has a nonempty subtree $T_k$ with a stem $t_{k+1}$ of length $k+1$, such that the following holds for every $\vec{\eta} \in T_k$:
		\begin{itemize}
			\item If $n = |\vec{\eta}|$ belongs to $A_k$ then $\suc_{T_0}(\vec{\eta})$ consists of a unique ordinal $\alpha_{\vec{\eta}} \in S_n$ which belongs to a  $\kappa_{n-1}$-homogeneous and approximated set in $M(\vec{\eta})$, and is further contained in every closed unbounded subset of $\kappa_n$ in $M(\vec{\eta})$. 
			Hence, $M(\vec{\eta} \fr \la \alpha_{\vec{\eta}}\ra)$ is an elementary end extension of $M(\vec{\eta})$ above $\kappa_{n-1}$, and meets $S_n$. 
			\item If $n = |\vec{\eta}|$ belongs to $A_0$ then $\suc_{T_0}(\vec{\eta})$ is an unbounded subset of $\kappa_{\ell(n)}$ which is $\kappa_{\ell(n)-1}$-homogeneous and approximated in $M(\vec{\eta})$. In particular, for every $\alpha \in \suc_{T_0}(\vec{\eta})$, $M(\vec{\eta} \fr \la \alpha\ra)$ is an elementary end extension of $M(\vec{\eta})$ above $\kappa_{\ell(n)-1}$. 
		\end{itemize}
		It clearly follows that for every maximal branch $b$ in $T_0$, the model $M(b) = \cup_{m<\omega}M(b\uhr m)$ satisfies $\sup(M(b) \cap \kappa_n ) \in S_n$ for every $n \in A_k$. 
		We proceed to define a $\subset$-decreasing sequence  $\la T_m \mid k+1 <  m < \omega\ra$ of subtrees of $T_k$, which satisfy the following conditions:
		\begin{enumerate}
			\item The length of the stem $t_m$ of $T_m$ is at least $m$,
			\item for every $n \in A_0 \setminus m$ and $\vec{\eta} \in T_m$ of length $|\vec{\eta}| \in \ell^{-1}(n)$,  $\suc_{T_m}(\vec{\eta})$ is an unbounded subset of $\kappa_n$, 
			\item for every $n \in A_0 \cap m$ and $\vec{\eta} \in T_m$ of length $|\vec{\eta}| \in \ell^{-1}(n)$,  $\suc_{T_m}(\vec{\eta}) = \{ \alpha_{\vec{\eta}}\}$ is a singleton,
			\item 
			for every $n \in A_0 \cap m$ there exists an ordinal $\delta_n \in S_n$ such that $\sup(M(b) \cap \kappa_n) = \delta_n$ for every maximal branch $b$ in $T_m$.
		\end{enumerate}
		A sequence of trees with these properties have a common maximal branch $b$, for which $M(b) \elem \A$ meets every $S_n$.
		Let us describe the inductive step of the construction $T_m \mapsto T_{m+1}$. Suppose  $T_m$ has been constructed. If $m \in A_k$ then nothing needs to be done, since by the definition of $T_k$, the length of the stem of $T_m$ which is guaranteed to be at least $m$, must actually have length $m+1$. 
		
		Suppose $m \in A_0$. For every $\delta \in \kappa_{m}$ consider the cut-and-choose type game $\game_\delta$, played by two players G (good) and B (bad) on $T_m$, in which they build an increasing sequence of nodes $\vec{\eta}_0,\vec{\eta}_1,\dots$ in $T_m$, resulting in a cofinal branch $b$ in $T_m$. At  round $r < \omega$ of the game, given $\vec{\eta}_r \in T_m$, player B chooses an ordinal $\beta_r <  \sup(\suc_{T_m}(\vec{\eta}_r))$, and if $\ell(r) = m$ then $\beta_r$ needs to be below $\delta$. Then, player G is required to choose an ordinal $\alpha_r \in \suc_{T_m}(\vec{\eta}_r) \setminus \beta_r$, which determines the next node - $\vec{\eta}_{r+1} = \vec{\eta}_r \fr \la \alpha_r\ra$. Note that if $\suc_{T_m}(\vec{\eta}_r)$ is a singleton $\{ \alpha\}$, then B must choose an ordinal $\beta < \alpha$, and G has to pick $\alpha_r = \alpha$. At any stage of game, any player who fails to play according to these requirements looses. If the game continuous for infinitely many rounds and produces a branch $b$, then G wins if and only if $\sup(M(b) \cap \kappa_m) \leq \delta$. 
		Since any violation of $\sup(M(b) \cap \kappa_m) \leq \delta$ is achieved sat a finite stage $r$ via $M(\vec{\eta_r})$, the payoff set for player B is open, and thus $\game_\delta$ is determined. 
		
		For each $\delta < \kappa_m$, let $\sigma_\delta$ be a winning strategy (for either G or B) in $\game_\delta$. 
		Pick a regular cardinal $\theta^*$ above $\theta$ and let $\A^* = ( H_{\theta^*}, \in, \A,M,\vec{S},T_m,\la \sigma_\delta \mid \delta < \kappa_m\ra)$. 
		As shown in \cite{ForMag-MS}, if $\delta < \kappa_m$ satisfies that $\SK^{A^*}(\delta) \cap \kappa = \delta$ then $\sigma_\delta$ cannot be a winning strategy for B. The idea is that if $\sigma_\delta$ were a strategy for B, then we can form a play of $\sigma_\delta$  whose moves $\vec{\eta_0},\vec{\eta_1}, \dots$ are all in $\SK^{A^*}(\delta)$. This contradicts the assumption of B winning, as the resulting structure $M(b)$ must be contained in $\SK^{A^*}(\delta)$. 
		It follows that $\sigma_\delta$ is winning for G, for a closed unbounded set of $\delta < \kappa_m$. In particular, it contains an ordinal $\delta \in S_m \setminus \sup(M \cap \kappa_m)$. Fix such an ordinal $\delta$, and pick a cofinal sequence $\la \delta_p \mid p < \omega\ra$ in $\delta$. 
		We shall use this sequence 
		and $\sigma_\delta$ to construct $T_{m+1}$ level by level. 
		At each stage, we make sure the sequences $\vec{\eta}$ which are added to $T_{m+1}$ correspond to legal plays of $\sigma_\delta$. 
		It is clear that the first $m$ rounds of any play with $\sigma_\delta$ leads to $\vec{\eta}_m = t_m$, the stem of $T_m$. 
		We then play one more round to pick an ordinal $\alpha_{m}$ from $\suc_{T_{m}}(t_m)$ and set $\suc_{T_{m+1}}(t_m) = \{ \alpha_m\}$. 
		This determines the first $(m+1)$ levels of $T_{m+1}$. We proceed by induction to define the $n$-th level of $T_{m+1}$ for every $n > m+1$. Let $\vec{\eta}$ be a node on the $n$-th level of $T_{m+1}$. If $\suc_{T_m}(\vec{\eta}) = \{\alpha\}$ is a singleton then we define 
		$\suc_{T_{m+1}}(\vec{\eta}) = \{\alpha\}$. Otherwise, $\suc_{T_m}(\vec{\eta}) \subset \kappa_{\ell(n)}$ is unbounded, and we consider the following two cases:
		\begin{enumerate}
			\item Suppose $\ell(n) = m$. Note that  $\suc_{T_m}(\vec{\eta}) \cap \delta$ must be unbounded in $\delta$ since otherwise B could play $\beta_n = \sup(\suc_{T_m}(\vec{\eta}) \cap \delta) + 1$ leaving  $\sigma_\delta$ without legal moves. If $n$ is the $p$-th element of $\ell^{-1}(m)$ (recall $\ell^{-1}(m)$ is infinite) then let $\alpha_{\vec{\eta}} < \delta$ be the response of $\sigma_\delta$ to B playing $\beta_n = \delta_p$.
			Clearly, $\delta_p \leq \alpha_{\vec{\eta}} < \delta$.
			
			\item Suppose that $\ell(n) \neq m$, then $\ell(n) > m$ since $\suc_{T_m}(\vec{\eta})$ is not a singleton. We define an increasing sequence $\la \alpha_{\vec{\eta}}(i) \mid i < \kappa_{\ell(n)}\ra$ of ordinals in 
			$\suc_{T_m}(\vec{\eta})$ by induction on $i < \kappa_{\ell(n)}$.
			Define $\alpha_{\vec{\eta}}(0)$ to be the ordinal response of $\sigma_\delta$ to B playing $\beta_n = 0$. Suppose now $\alpha_{\vec{\eta}}(i)$ has been defined for all $i$ below some $i^* < \kappa_{\ell(n)}$. Define $\beta _{\vec{\eta}}(i^*) = \bigcup_{i<i^*}\alpha_{\vec{\eta}}(i)$ and let $\alpha_{\vec{\eta}}(i^*)$ be the ordinal response of $\sigma_\delta$ to B playing $\beta_n = \beta _{\vec{\eta}}(i^*)+1$. 
			We finally set $\suc_{T_{m+1}}(\vec{\eta}) = \{\alpha_{\vec{\eta}}(i) \mid i < \kappa_{\ell(n)}\}$. 
		\end{enumerate}
		This concludes the construction of $T_{m+1}$. It is straightforward to verify $T_{m+1}$ satisfies the first three conditions listed above. Let us verify $T_{m+1}$ satisfies the fourth condition for $n = m \in A_0 \cap (m+1)$.
		Suppose that $b \subset T_{m+1}$ is a cofinal branch. On the one hand, $b$ is a result of a $\sigma_\delta$ play, and therefore $\sup(M(b) \cap \kappa_{m}) \leq \delta$. On the other hand, our choices of $\suc_{T_{m+1}}(\vec{\eta})$ for every $\vec{\eta} \in T_{m+1}$ with $|\vec{\eta}| = n \in \ell^{-1}(m)$, guarantee  $\sup(M(b) \cap \kappa_{\ell(n)})$ is above $\delta_p$ for every $p < \omega$. We conclude that  
		$\sup(M(b) \cap \kappa_{\ell(n)}) = \delta$ for every maximal branch $b$ of $T_{m+1}$. 
	\end{proof}

	\section{Mutual Stationarity and Prikry-type forcing}\label{section-3}
	
	We present a variant of the ideal-based construction of homogeneous and approximated sets from Section \ref{section-1}. The ideals we will be using naturally emerge from Prikry-type forcing notions; hence their name - ``Prikry-closed ideals``. 
	
	\begin{definition}
		Let $I$ be a $\kappa$-complete non-principal ideal on a cardinal $\kappa$, and let $\mu < \kappa$ be a regular cardinal. 
		We say that $I$ is $\mu$-Prikry-closed if there exists a dense set $D$ of $I^+$ and a $\mu$-closed suborder $\leq^*_I$ of $\leq_I\uhr D$ which satisfies the following two conditions:
		\begin{itemize}
			\item $\leq^*_I$ respects $\equiv_I$ equivalency. Namely, if $B^* \leq^*_I B$ and $B' \equiv_I B^*$ then $B' \leq^*_I B$. 
			\item  For every $B \in D$ and $f : B \to 2$ there exists a set $B^* \leq_I^* B$ such that $f\uhr B^*$ is constant.
		\end{itemize}
	\end{definition}
	
	It is easy to see that if $\leq^*_I$ is $(\lambda+1)$-closed then we can replace the function $f : B \to 2$ in the second condition of the definition with a function $f : B \to \lambda$. 
	Consequently, by repeating the arguments of Lemma \ref{Lem 2.2}, replacing the $(|M|+1)$-closure assumption of $I$ with a $(\max\{|M|,\lambda\}+1)$-Prikry-closure assumption, we obtain the following analogous result. 
	
	\begin{lemma}\label{Lem 4.1}
		Let $M$ be a $|M|$-closed substructure of an algebra $\A$. If $\lambda < \kappa$ are regular cardinals in $M$ and $I \in M$ is a $\kappa$-complete and $(\max\{|M|,\lambda\}+1)$-Prikry-closed ideal on $\kappa$,
		then every $A \in I^+ \cap M$ has a subset $B \in I^+$ of $A$ which is $\lambda$-homogeneous and approximated in $M$.
	\end{lemma}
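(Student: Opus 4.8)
\emph{Proof plan.} The plan is to rerun the proof of Lemma~\ref{Lem 2.2} almost word for word, making only two substitutions. In place of the $\leq_I$-dense set $D$ and the order $\leq_I$ one uses the dense set $D \subseteq I^+$ together with the suborder $\leq^*_I$ of $\leq_I\uhr D$ witnessing that $I$ is $(\max\{|M|,\lambda\}+1)$-Prikry-closed; since $I \in M$, these may be chosen as the $<_\theta$-least such pair (assuming, as in the setting of Lemma~\ref{Lem 2.2}, that $\A$ carries a definable well-order $<_\theta$), and hence lie in $M$. Secondly, the successor step of the construction will no longer decompose $\kappa = \biguplus_{\nu<\lambda}f^{-1}(\nu)$ and pick an $I$-positive piece, but will instead appeal once to the homogenization clause of Prikry-closedness, which by the remark following that definition (using the $(\lambda+1)$-closure of $\leq^*_I$) reads: for every $B \in D$ and $g : B \to \lambda$ there is $B^* \leq^*_I B$ with $g\uhr B^*$ constant.

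Given these, I would proceed exactly as there. Fix an enumeration $\vec f = \la f_i \mid i < |M|\ra \in M$ of all functions $\kappa \to \lambda$ in $M$; since $M$ is $|M|$-closed, every proper initial segment of $\vec f$, and of the sequence $\vec A$ constructed below, lies in $M$. I would build a $\leq^*_I$-decreasing sequence $\vec A = \la A_i \mid i < |M|\ra \subseteq D \cap M$ with each $A_i$ definable from $\vec f\uhr i$, $\vec A\uhr i$, $D$, $\leq^*_I$ and $<_\theta$ (hence a member of $M$): take $A_0 \in D\cap M$ with $A_0 \leq_I A$; at a limit $i$ let $A_i$ be the $<_\theta$-least $\leq^*_I$-lower bound in $D$ of $\vec A\uhr i$, available since $\leq^*_I$ is $(\max\{|M|,\lambda\}+1)$-closed and these sequences have length $\leq |M|$; and at a successor $i = i'+1$ let $A_i$ be the $<_\theta$-least $B^* \in D$ with $B^* \leq^*_I A_{i'}$ and $f_{i'}\uhr B^*$ constant, which exists by the homogenization clause.

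Finally, I would let $B' \in D$ be a $\leq^*_I$-lower bound of all of $\vec A$ (again by $(\max\{|M|,\lambda\}+1)$-closure), set $Z' = \bigcup_{i'<|M|}\big(Z_{i'} \cup (B'\setminus A_{i'+1})\big)$ — where $Z_{i'} \in I\cap M$ is an exceptional set off which $f_{i'}\uhr A_{i'+1}$ is constant, in fact $Z_{i'}=\emptyset$ since the homogenization clause gives literal constancy, and $B'\setminus A_{i'+1}\in I$ because $B'\leq_I A_{i'+1}$ — so that $Z'\in I$ by $\kappa$-completeness, and put $B = (A\cap B')\setminus Z'$. As in the last paragraph of the proof of Lemma~\ref{Lem 2.2}, $B\subseteq A$ and $B\in I^+$ since $B'\leq_I A$ and $Z'\in I$; and for each $f = f_{i'}$ in $M$ the set $A_f = A_{i'+1}\in M$ satisfies $B\subseteq B'\cap A_{i'+1}\subseteq A_{i'+1}$ with $f\uhr A_{i'+1}$ constant, so $A_{i'+1}$ witnesses that $B$ is approximated in $M$, and in particular $f\uhr B$ is constant. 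Hence $B$ is $\lambda$-homogeneous and approximated in $M$.

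I do not expect a genuine obstacle here: the content is precisely the bookkeeping already carried out in Lemma~\ref{Lem 2.2} — keeping every $A_i$ and every relevant initial segment inside $M$, which is exactly why $M$ is taken $|M|$-closed and why all choices are made $<_\theta$-least — together with the one observation that the closure ordinal $\max\{|M|,\lambda\}+1$ is tuned exactly to its two tasks: the ``$|M|+1$'' part supports the length-$\leq|M|$ lower bounds of the outer recursion, while the ``$\lambda+1$'' part is what the remark preceding this lemma needs in order to upgrade its two-valued homogenization clause to a $\lambda$-valued one. It is also worth noting that the $\equiv_I$-respecting clause of Prikry-closedness is not used in this argument; it becomes relevant only for the forcing-theoretic applications of these ideals later in the section.
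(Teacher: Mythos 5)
Your proposal is correct and follows precisely the route the paper itself prescribes (the text before Lemma~\ref{Lem 4.1} states only that it is obtained by repeating the argument of Lemma~\ref{Lem 2.2}, with the $(|M|+1)$-closure of $\leq_I$ replaced by the $(\max\{|M|,\lambda\}+1)$-Prikry-closure, and with the decomposition-into-fibers step replaced by a single appeal to the $\lambda$-valued homogenization clause), so no comparison of approaches is needed. One small slip: the parenthetical claim that $Z_{i'}=\emptyset$ ``since the homogenization clause gives literal constancy'' is not justified. Since $\leq^*_I$ is stipulated to respect $\equiv_I$, it is a relation between $\equiv_I$-classes, not literal inclusion, so the set $A_{i'+1}\leq^*_I A_{i'}$ produced by the homogenization clause need only satisfy $A_{i'+1}\setminus A_{i'}\in I$; the clause, applied to $f_{i'}\uhr A_{i'}$, guarantees constancy only on $A_{i'+1}\cap A_{i'}$. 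Thus $Z_{i'}:=A_{i'+1}\setminus A_{i'}\in I\cap M$ may be nonempty, and the approximation witness at the end should be $A_f=A_{i'+1}\setminus Z_{i'}\in M$ rather than $A_{i'+1}$ itself. Since you already place $Z_{i'}$ inside $Z'$, this is a one-word correction and nothing else in the argument changes; your further inclusion of $B'\setminus A_{i'+1}$ in $Z'$ is exactly the bookkeeping needed (for the same reason: $B'\leq^*_I A_{i'+1}$ does not give $B'\subseteq A_{i'+1}$), and makes the witnessing step cleaner than the analogous passage in Lemma~\ref{Lem 2.2} leaves it.
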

	
	\subsection{Constructing Prikry-closed ideals on accessible cardinals}
	Starting from an increasing sequence of measurable cardinals $\la \kappa_n \mid n < \omega\ra$ such that each $\kappa_n$ has a $(*,\kappa_{n-1}^+)$ sequence\footnote{For $n = 0$, we replace $\kappa_{-1}^+$ with $\omega$.}, we show there exists a generic extension in which $\kappa_n = \aleph_{2n+1}$ for each $n < \omega$ and that every stationary set $S \subset \kappa_n \cap \cof(<\kappa_{n-1}^+)$ is positive with respect to a $\kappa_{n-1}$-Prikry-closed nonstationary ideal $I$ on $\kappa_n$.
	By Lemma \ref{Lem 4.1}, this generic extension satisfies the result of Theorem \ref{thm1.3}.
	
	To obtain the desired forcing extension, we construct an $\omega$-iteration of posets $\qo(n)$ which satisfy the following conditions:
	\begin{enumerate}
		\item $|\qo(n)| = \kappa_n^+$,
		\item $\qo(n)$ collapses all the cardinals in the interval $(\kappa_{n-1}^+,\kappa_n)$,
		\item $\qo(n)$ is $\kappa_{n-1}^+$-distributive and is subsumed by a Prikry-type forcing $\qo(n)^*$ whose direct extension order is $\kappa_{n-1}^+$-closed.
	\end{enumerate} 
	
	By using Gitik's method of iterating distributive forcings which embed into Prikry-type posets, it is possible to 
	form a Prikry-type iteration $\bar{\qo} = \la \bar{\qo}_n, \qo(n) \mid n < \omega\ra$ of the posets $\qo(n)$, so that for every $n < \omega$, the tail $\bar{\qo}/\qo_n$ of the iteration  $\bar{\qo}$ does not add new bounded subsets to  $\kappa_{n-1}^+$.
	Therefore, for all purposes involving the subsets of $\kappa_n$ and ideals on $\kappa_n$ in a $\bar{\qo}$ forcing extension, it is sufficient to consider its intermediate extension by $\qo_{n+1} \cong \qo_n *  \qo(n)$. Since the forcing $\qo_n$ has size $\kappa_{n-1}^+$, which is small with respect to $\kappa_n$, the  $(*,\kappa_{n-1}^+)$ assumption in $V$ still holds in a $\qo_n$ generic extension $V[\G{\qo_n}]$.
	It is therefore sufficient to focus on a single ``local`` step of the construction and prove the following statement.
	Fix $n < \omega$, and denote $\kappa_{n-1}^+$ by $\lambda$ and $\kappa_n$ by $\kappa$. 
	\begin{proposition}\label{prop4.1}
		Suppose $\lambda < \kappa$ are regular cardinals in a ground model $V$ so that $\kappa$ carries a $(*,\lambda)$ sequence. Then there exists a forcing notion $\qo$ which satisfies the following conditions:
		\begin{enumerate}
			\item $|\qo| = \kappa^+$,
			\item $\qo$ collapses all the cardinals in the interval $(\lambda,\kappa)$,
			\item $\qo$ is $\lambda$-distributive and is subsumed by a Prikry-type forcing $\qo^*$ whose direct extension order is $\lambda$-closed,
			\item every stationary set $S \subset \kappa \cap \cof(<\lambda)$ in a $\qo$ generic extension is positive with respect to a
			$\lambda$-Prikry-closed nonstationary ideal on $\kappa$.
		\end{enumerate}
	\end{proposition}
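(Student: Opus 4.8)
The plan is to mirror Section~\ref{section-1}: first construct an appropriate collapse-type poset $\qo$, and then read off the ideals required by clause~(4) from supercompact ultrapower embeddings, exactly as the ideals $I_{\gamma,r}$ were obtained in the proof of Proposition~\ref{prop 2.7}, the difference being that the relevant quotient forcing will now be of Prikry type rather than closed, so the resulting ideals will be Prikry-closed rather than closed. The construction of $\qo$ itself I would quote from Gitik. Let $\U = \la U_i \mid i < \lambda\ra$ be the $(*,\lambda)$-sequence on $\kappa$, a Mitchell-increasing sequence of $\kappa^+$-supercompact normal measures. Using Gitik's technique of iterating $\lambda$-distributive forcings which embed into Prikry-type posets (\cite{Gitik-NSprec},~\cite{Gitik-HB},~\cite{Gitik-precaleph2}), one obtains a Prikry-type forcing $\qo^*$ of size $\kappa^+$ whose direct extension order $\leq^*$ is $\lambda$-closed, which collapses $(\lambda,\kappa)$ via its collapse data, and which projects onto a $\lambda$-distributive poset $\qo$. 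Clauses~(1)--(3) are precisely what this construction delivers: $|\qo| = \kappa^+$ from $|\U| = \lambda$ and the bookkeeping on conditions; the collapse of $(\lambda,\kappa)$ is built in; and the $\lambda$-distributivity of $\qo$ follows from the Prikry property of $\qo^*$ together with the $\lambda$-closure of $\leq^*$, since a new $<\lambda$-sequence of ordinals would be decided by direct extensions and hence already lie in $V$. This part uses no new ideas.

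For clause~(4), fix in $V[\G{\qo}]$ a stationary $S \subseteq \kappa \cap \cof(<\lambda)$, with a $\qo$-name $\name{S}$ and a condition $p \in \G{\qo}$ forcing $\name{S}$ stationary in $\can{\kappa}$, and let $j : V \to M = \Ult(V,U)$ be the ultrapower by a $\kappa^+$-supercompact member $U$ of $\U$; so $\cp(j) = \kappa$, ${}^{\kappa^+}M \subseteq M$, and $j(\lambda) = \lambda$. As in the proof of Proposition~\ref{prop 2.7}, Gitik's analysis of $\qo$ and its $j$-image supplies a master condition $g \in j(\qo)/\qo$ for $j$ and $\qo$, and the quotient $j(\qo)/\qo$ is subsumed by a Prikry-type forcing $\qo'$ --- morally a diagonal Prikry forcing at $\kappa$ drawn from $\U$ --- whose direct extension order is $\lambda$-closed in $V[\G{\qo}]$; establishing the latter, using the $\lambda$-distributivity of $\qo$, is again part of Gitik's machinery. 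Since $|\qo| = \kappa^+ < j(\kappa)$ and ${}^{\kappa^+}M \subseteq M$, the sequence (of length at most $\kappa^+$) of $j$-images of all $\qo$-names for closed unbounded subsets of $\kappa$ belongs to $M$, so the trivial condition of $j(\qo)$ forces $C^* := \bigcap_\xi j(\name{C_\xi})$ to be club in $j(\kappa)$. As $g \geq j(p)$ forces $j(\name{S})$ stationary in $j(\kappa)$, it forces $j(\name{S}) \cap C^*$ unbounded, so we may choose $r \in j(\qo)/\qo$ extending $g$ and $\gamma \geq \kappa$ with $r \force \can{\gamma} \in j(\name{S}) \cap C^*$. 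Put $I := I_{\gamma,r}$. By Fact~\ref{fact 2.8}, whose proof does not use closedness of the quotient, $I$ is a $\kappa$-complete, nonprincipal, nonstationary ideal on $\kappa$ with $S \in \check{I}$; and since $j(\name{S})$ is forced to consist of ordinals of cofinality $<\lambda$ and cofinalities below $\lambda$ are preserved by $\qo$ and by $j(\qo)/\qo$, the ordinal $\gamma$ has cofinality $<\lambda$ in $V[\G{\qo}]$ --- the feature that matches the $\lambda$-closure below.

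It remains --- and here I expect the real work to lie --- to show $I$ is $\lambda$-Prikry-closed, which is the Prikry-type analogue of Fact~\ref{fact 2.8}(3). Since $j = j_U$ is an ultrapower, that fact supplies a projection $\pi$ from $j(\qo)/\qo$ (equivalently, from $\qo'$) onto a dense subset $D$ of $I^+$ modulo $\equiv_I$, sending a condition $q \geq g$ represented by $f_q : B \to \qo$ with $B \in U$ to the $\equiv_I$-class of $\{ f_\gamma(x) \mid x \in B,\ f_q(x) \in \G{\qo}\}$, where $f_\gamma : B \to \kappa$ represents $\gamma$. Transport the $\lambda$-closed direct-extension order $\leq^*$ of $\qo'$ forward along $\pi$ to a relation $\leq^*_I$ on $D$. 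One checks: $\leq^*_I$ refines $\leq_I$ and respects $\equiv_I$, directly from the definition of $\pi$; $\leq^*_I$ is $\lambda$-closed, because a $\leq^*_I$-increasing sequence of length $<\lambda$ lifts by recursion to a $\leq^*$-increasing sequence of conditions of $\qo'$ above $g$, whose $\leq^*$-bound projects under $\pi$ to a $\leq^*_I$-bound; and for the homogenization clause, given $B \in D$ and $f : B \to 2$, lift $B$ to $q \geq g$ with $\pi(q) \equiv_I B$ and apply the Prikry property of $\qo'$ to the statement ``$j(f)(\gamma) = 0$'': a direct extension $q^* \geq^* q$ decides it, with some value $\ep$, and then $B^* := \pi(q^*) \leq^*_I B$ satisfies $f \uhr B^* \equiv \ep$. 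The genuinely delicate points, where Gitik's machinery has to be used and not merely quoted, are (i) that $\qo'$ really does possess the Prikry property together with a $\lambda$-closed direct extension after the $\qo$-forcing and after applying $j$ --- this must be extracted from Gitik's treatment of the quotients of his iteration --- and (ii) that $\pi$ cooperates with $\leq^*$ well enough for the two lifting arguments above; it is here that $\cf(\gamma) < \lambda$, hence the hypothesis $S \subseteq \cof(<\lambda)$, enters, keeping the decision about $\gamma$ inside the $\lambda$-closed direct-extension order of $\qo'$. Granting these, $I = I_{\gamma,r}$ witnesses clause~(4), and by Lemma~\ref{Lem 4.1} the full iteration $\bar{\qo}$ yields Theorem~\ref{thm1.3}.
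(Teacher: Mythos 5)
Your plan — build $\qo$ by citing Gitik as a black box and then run the Proposition \ref{prop 2.7} argument with one fixed supercompact measure $U$, a master condition $g \in j(\qo)/\qo$, and a flexible $\gamma$ caught in $C^*$ — misses the mechanism that actually makes clause (4) true, and the steps you defer to ``Gitik's machinery'' are the content of the proof, not available off the shelf. In the paper, $\qo = (\po*\col)*\CC$, where $\CC$ is a $\kappa^+$-length club-shooting iteration through the sets $\bar{E}_\eta$ with $E_\eta \in \bigcap_{1\le\tau<\lambda}U_{\kappa,\tau}$; its whole purpose is to destroy every subset of $\kappa\cap\cof(<\lambda)$ that is not positive for the canonical extension of $U_{\kappa,\tau}$ for \emph{some} $\tau<\lambda$. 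Your description of $\qo$ contains no such component, and clause (4) is obtained from it: by Gitik's characterization of $\NS_\kappa\uhr\cof(<\lambda)$ in $V[\G{\qo}]$, a stationary $S$ yields an ordinal $\tau$ \emph{depending on} $S$, a threading name $\NG{\vec{C}}$ of a $\CC$-generic over $M_\tau$, and a condition $(p',g')$ in the associated quotient with $p'*g'*\vec{d}^{\tau}_{\vec{C}} \force \can{\kappa}\in j_\tau(\name{S})$; the ideal is then read off at the point $\kappa$ itself. In your version the existence of the master condition $g$ for a fixed $j_U$ across the full $\qo$ is exactly what is in doubt: to put $j``\G{\CC}$ below a condition of $j(\CC)$ one must append $\kappa$ to each $C_i$ and verify that $\kappa$ is a generic point over $M[\G{\po}*\G{\col}]$ in the sense of $j(\CC)$, and this genericity is what the paper arranges by construction, through names produced by the threading process and the extendability of projections — not something Gitik's papers supply for an arbitrary fixed ultrapower. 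It is telling that your argument never uses the hypothesis $S\subseteq\cof(<\lambda)$ in any working way (the remark that $\cf(\gamma)<\lambda$ ``keeps the decision inside the direct extension order'' has no mechanism behind it), whereas in the paper that restriction is precisely where the characterization of the nonstationary ideal applies.

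The second gap is the $\lambda$-Prikry-closedness itself. In the paper this rests on two specific properties of the threading projections: extendability (so the name $\NG{\vec{C}_\eta}$ extends to a name $\NG{\vec{C}}$ of a full $\CC$-generic compatible with $(p',g')$) and, crucially, compatibility of direct extensions with the induced quotient, so that $\leq^*$-extensions of conditions of $\ro^* = j_\tau(\po*\col)/(\G{\po}*\G{\col}*\la\vec{C},\NG{\vec{C}}\ra)*j_\tau(\po(\kappa,\lambda))$ remain in the quotient determined by the actual generic $\vec{C}$. Your transport of $\leq^*$ along the ultrapower projection $\pi$ of Fact \ref{fact 2.8}(3) silently assumes exactly this: without the specific choice of $\NG{\vec{C}}$, shrinking a tree in the $\po(\kappa,\cdot)$-part can leave the set of conditions compatible with $\vec{C}$, and then $\leq^*_I$ is not a well-defined $\lambda$-closed suborder on a dense subset of $I^+$, and your homogenization step (deciding $j(\name{f})(\can{\kappa})$ by a direct extension and projecting back) breaks down. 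So as written the proposal reduces Proposition \ref{prop4.1} to two unproved claims — a master condition for $j_U$ over the full $\qo$, and a Prikry-type structure with $\lambda$-closed direct extensions on $j(\qo)/\qo$ compatible with the quotient — and these, handled in the paper via the club-shooting iteration, the characterization of the nonstationary ideal, and the projection machinery, are the theorem's actual content.
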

	
	In the proof of Proposition \ref{prop4.1} we rely on a method of Gitik from \cite{Gitik-iteration}, showing that from assumptions similar to a $(*,\lambda)$-sequence, there exists a forcing extension $V[\G{\qo}]$ in which $\kappa = \lambda^+$ and the nonstationary ideal on $\kappa$ restricted to $\cof(<\lambda)$ is precipitous. 
	The idea is that after a certain preparatory forcing, it is possible to add closed unbounded sets to $\kappa$ which destroy the stationarity of all subsets $S \subset \kappa \cap \cof(<\lambda)$ which are not positive with respect to a certain natural filter-extension of a normal measure $U_{\kappa,\tau}$ on $\kappa$ in the ground model. Therefore, the forcing $\qo = (\po*\col) * \CC$ consists of two main parts: The preparatory forcing $\po*\Col$ which adds many supercompact Prikry and Magidor sequences to cardinals $\nu < \kappa$ and which collapses all the cardinals in the interval $(\lambda,\kappa)$, and a club shooting iteration $\CC$ which is responsible for destroying the stationarity of all ``bad`` sets $S$.
	
	We proceed to describe the forcing $\qo$ and highlight its key properties needed for the proof of Proposition \ref{prop4.1}. We assume the reader is familiar with the basic terminology and results concerning supercompact and measurable cardinals, and Prikry-type posets.
	
	\noindent \textbf{On the ground model assumptions and the preparation forcing -} 
	Our assumption, that $\kappa$ carries a $(*,\lambda)$-sequence stipulates the existence of a coherent sequence 
	$\W = \la W_{\nu,\tau} \mid \nu \leq \kappa, \tau < o(\nu)\ra$ of supercompact measures with the following properties: 
	\begin{itemize}
		\item For each $\nu \leq \kappa$ with $o(\nu) > 0$, $W_{\nu,\tau}$ is a $\nu^+$-supercompact measure on $\nu$. Namely, it is a $\nu$-complete fine normal measure on $\power_\nu(\nu^+)$. In particular, by standard arguments, each $W_{\nu,\tau}$ concentrates on the set of $x \in \power_\nu(\nu^+)$ with $x \cap \nu \in \nu$ and $\otp(x) = (x \cap \nu)^+$. 
		We refer to a set $x$ with these properties as a \textbf{supercompact point}, and denote $x \cap \nu$ by $\nu_x$. 
		\item $o(\kappa) = \lambda$, and $o(\nu) < \lambda$ for all $\nu < \kappa$,
		\item For each $W_{\nu,\tau}$, let $U_{\nu,\tau}$ be its projection to a normal measure on $\nu$, defined by $A \in U_{\nu,\tau}$ if and only if $\{ x \in \power_\nu(\nu^+)  \mid x \cap \nu \in A\} \in W_{\nu,\tau}$. The sequence $\U = \la U_{\nu,\tau} \mid \nu \leq \kappa, \tau < o(\nu)\ra$ is a coherent sequence of normal measures.
	\end{itemize}
	For each $\tau < \lambda$, let  $j_\tau : V \to M_\tau \cong \Ult(V,W_{\kappa,\tau})$ denote the ultrapower embedding of $V$ by $W_{\kappa,\tau}$.
	
	The first part of the preparatory forcing is an iteration of Prikry-type posets,  $\po = \la \po_\nu,\po(\nu) \mid \lambda < \nu < \kappa\ra$, which add a cofinal supercompact Prikry/Magidor sequence $\x_\nu \subset \power_\nu(\nu^+)$ to each $\nu < \kappa$ with $o(\nu)>0$. 
	Since we wish to start our iteration above $\lambda$, we set $\po_{\lambda+1}$ to be the trivial forcing, and
	for every ordinal $\nu$,  $\lambda < \nu < \kappa$, $\po(\nu)$ is either the trivial forcing or a Prikry-type forcing of size $|\po(\nu)| \leq 2^{\nu^+}$, whose direct extension order is $\nu$-closed. 
	The manner in which the posets $\po(\nu)$ are iterated is called the Gitik iteration. Conditions $q \in \po$ have the form $q = \la q(\gamma) \mid \gamma \in \supp(q)\ra$ where $\supp(q)$ is an Easton subset of $\kappa$\footnote{namely, $\supp(p)\cap \delta$ is bounded in $\delta$, for every $\delta \leq \kappa$ regular.}, and for every $\gamma \in \supp(q)$,  $q\uhr \gamma$ belongs to $\po_\gamma$ and forces $q(\gamma) \in \po(\gamma)$. When extending $q$ in $\po$, only finitely many non-direct extensions of $q(\gamma)$, $\gamma \in \supp(q)$, are allowed. A condition $q^* \in \po$ is a direct extension of $q$ if $q^*(\gamma)$ is a direct extension of $q(\gamma)$ for all $\gamma \in \supp(q)$.  In \cite{Gitik-iteration}, it is shown that a Gitik iteration as above has the following properties.\\

	\begin{fact}[Basic properties of $\po$]${}$
		
		\begin{enumerate}
			\item For all $\nu \leq \kappa$, $\po_\nu$ and $\po/\po_{\nu}$ are of Prikry-type. 
			\item The direct extension order $\po/\po_{\nu}$ is $\nu$-closed. In particular  $\po/\po_{\nu}$ does not add new bounded subsets to $\nu$, and $\po \cong \po/\po_{\lambda+1}$ does not add new subsets to $\lambda$.
			\item $|\po_{\nu}| = 2^\nu$ and if $\nu$ is Mahlo then $\po_{\nu}$ satisfies $\nu$.c.c.
			\seti
		\end{enumerate}
	\end{fact}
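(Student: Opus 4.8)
The plan is to prove all three items by a single induction on $\nu\le\kappa$, following the analysis of Gitik iterations in \cite{Gitik-iteration}, carrying along the inductive hypotheses that $\po_\nu$ is of Prikry-type, that the $\po_\nu$-name $\po/\po_\nu$ is forced to be of Prikry-type, and that the direct extension order of $\po/\po_\nu$ is $\nu$-closed. I would first dispatch item (2), since it supplies the bookkeeping needed for item (1). Given a $\leq^*$-increasing sequence $\langle q_i\mid i<\delta\rangle$ in $\po/\po_\nu$ with $\delta<\nu$, the key observation is that $u=\bigcup_{i<\delta}\supp(q_i)$ is again an Easton subset of $(\nu,\kappa)$: for regular $\delta'<\nu$ the trace $u\cap\delta'$ is empty, and for regular $\delta'\ge\nu$ it is a union of $\delta<\nu\le\delta'$ sets each bounded in $\delta'$, hence bounded by regularity. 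One then recurses along $u$: at each $\gamma\in u$, the sequence $\langle q_i(\gamma)\mid i<\delta\rangle$ is, below the already-constructed upper bound of $\langle q_i\uhr\gamma\rangle$, a $\leq^*$-increasing sequence of $\po(\gamma)$-conditions of length $\delta<\nu<\gamma$, so the $\gamma$-closure of the direct extension order of $\po(\gamma)$ furnishes a name for an upper bound; assembling these names yields a $\leq^*$-upper bound in $\po/\po_\nu$. The two ``in particular'' clauses are then the routine Prikry-type argument: to decide a name $\name{X}$ for a subset of some $\mu<\nu$, use the Prikry property (item (1), in the inductive hypothesis) to find for each $\alpha<\mu$ a direct extension deciding $\check\alpha\in\name{X}$, chain these along $\leq^*$ (length $\mu<\nu$), and take an upper bound by the $\nu$-closure just established. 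Since $\po_{\lambda+1}$ was declared trivial, $\po\cong\po/\po_{\lambda+1}$ has $(\lambda+1)$-closed direct extension order, so $\po$ adds no new subset of $\lambda$.

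Item (1) is where I expect the real work to lie. The trivial step ($o(\nu)=0$) is vacuous, and the successor step reduces to gluing the Prikry property of $\po(\nu)$ over $\po_\nu$ with that of $\po_\nu$, which is standard. The heart is showing the Prikry property passes to the inverse-limit stages: given $q$ and a statement $\varphi$, one must produce $q^*\ge^* q$ deciding $\varphi$. Here one exploits that $\supp(q)$, being Easton, is bounded below every regular cardinal, and performs a recursion along $\supp(q)$ in which the possible non-direct extensions are successively absorbed into direct ones, using at each coordinate the Prikry property of $\po(\gamma)$ over $\po_\gamma$, the fact that any single extension alters only finitely many coordinates non-directly, and the closure of the direct extension orders (item (2)) to carry the diagonalization. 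This is precisely Gitik's preservation theorem for the Prikry property under his iteration, and it is the step I would expect to be the main obstacle; everything around it is comparatively soft.

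Finally, item (3). For $|\po_\nu|=2^\nu$ one counts directly: a condition has Easton domain inside $\nu$ and value at $\gamma$ a $\po_\gamma$-name for a member of $\po(\gamma)$, with $|\po(\gamma)|\le 2^{\gamma^+}$, so by boundedness of Easton supports below regular cardinals and the GCH pattern arranged in $V$ the total is $2^\nu$. For the chain condition with $\nu$ Mahlo, take $\nu$ conditions; by regularity of $\nu$ pass to $\nu$ of them whose supports are all bounded by a single $\eta<\nu$, and by inaccessibility of $\nu$ the forcing $\po_\eta$ has size $<\nu$ (its factors $\po(\gamma)$, $\gamma<\eta$, all have size $<\nu$), so two of those conditions are compatible; hence $\po_\nu$ has the $\nu$-c.c.
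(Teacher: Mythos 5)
Your treatment of items (1) and (2) is sound and is at the same level of detail as the paper itself, which states this Fact with a citation to Gitik's iteration paper rather than proving it: the union-of-supports argument for $\nu$-closure of the direct extension order, the decide-and-chain argument for not adding bounded sets, and the deferral of the Prikry-property preservation to Gitik's preservation theorem are all exactly the intended route (only note that in the closure argument the relevant coordinate can be $\gamma=\nu$ itself, so the inequality should read $\delta<\nu\le\gamma$, which is harmless since the direct order of $\po(\gamma)$ is $\gamma$-closed).

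The genuine gap is in your proof of the $\nu$.c.c.\ for $\nu$ Mahlo. The step ``by regularity of $\nu$ pass to $\nu$ of them whose supports are all bounded by a single $\eta<\nu$'' is false: each support is bounded in $\nu$ (by Easton-ness at the regular cardinal $\nu$), but the suprema of the supports of $\nu$ many conditions can be cofinal in $\nu$ (take $q_\alpha$ with $\alpha\in\supp(q_\alpha)$), so no single $\eta$ works. Tellingly, your argument never uses Mahloness, only inaccessibility, and mere inaccessibility does not suffice for the $\nu$.c.c.\ of Easton-support iterations --- that is precisely why the Fact assumes $\nu$ Mahlo. The correct argument presses down: given conditions $\la q_\delta \mid \delta<\nu\ra$, restrict to the stationary set of inaccessible (or just regular) $\delta<\nu$, where $\delta\mapsto \sup(\supp(q_\delta)\cap\delta)$ is regressive by Easton-ness, apply Fodor to fix $\eta<\nu$ with $\supp(q_\delta)\cap\delta\subset\eta$ stationarily often, thin further so that the restrictions $q_\delta\uhr\eta$ coincide (possible since $|\po_\eta|\le 2^\eta<\nu$ by inaccessibility of $\nu$ and the first part of item (3)), and finally choose recursively a subfamily in which each later $\delta'$ lies above $\sup(\supp(q_\delta))$ of the earlier ones; then any two chosen conditions have supports meeting only below $\eta$, where they agree, hence are compatible. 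A smaller point in the same item: your count $|\po_\nu|=2^\nu$ quietly needs control of the number of $\po_\gamma$-names for conditions of $\po(\gamma)$ (via canonical names together with the chain condition or a GCH-type assumption); the appeal to ``the GCH pattern arranged in $V$'' is not something the paper's hypotheses state, so this bookkeeping should be made explicit.
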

	
	\noindent    Suppose $\po_{\nu}$ has been defined for some $\nu < \kappa$. 
	If $o(\nu) = 0$, $\po(\nu)$ is taken to be the trivial forcing.
	Otherwise, $\po(\nu)$ is a tree forcing which threads a supercompact Prikry/Magidor cofinal sequence $\x_\nu$ to $\power_\nu(\nu^+)$.
	As opposed to the original Magidor poset, which adds a closed unbounded sequence as well as its new initial segments at the same time, here, $\po(\nu)$ does not introduce new bounded subsets of $\nu$ to $V[\G{\po_\nu}]$, but instead generates $\x_\nu$ by generically threading smaller generic sequences $\x_\mu$ for $\mu < \nu$.
	Therefore given a generic assignment $\mu \to \x_\mu$ which is assumed to be derived from $\G{\po_\nu}$, 
	$\po(\nu)$ consists of pairs $p = \la t,T\ra$ which satisfy the following conditions.
	\begin{itemize}
		\item $t = \la x_0,\dots,x_{n-1}\ra$ is a $\subset$-increasing sequence of supercompact points in  $\power_\nu(\nu^+)$. $t$ determines an initial segment $\x_t$ of $\x_\nu$, 
		defined by $\x_t = \x_{x_0} \cup \{x_0\} \cup \x_{x_1} \cup \{x_1\} \cup \dots \cup 
		\x_{x_{n-1}} \cup \{x_{n-1}\}$, where for each $i \leq n-1$, $\x_{x_i} \subset x_i$ is the order-isomorphic copy of  $\x_{\nu_{x_i}} \subset \nu_{x_i}^+$, obtained via the inverse of the transitive collapse of $x_i$ onto $(\nu_{x_i})^+$. 
		\item  $T \subset [\power_{\nu}(\nu^+)]^{<\omega}$ is a tree which contains the possible options to extend $t$ (and thus $\x_t$). For each $s \in T$, the splitting levels of $T$ are required to be measure one with respect to $o(\nu)$ many measures in $V[\G{\po_\nu}]$, $\{W_{\nu,\tau}(t \fr s) \mid \tau < o(\nu)\}$,  which extend the
		ground model normal measures $\{W_{\nu,\tau} \mid \tau < o(\nu)\}$, and concentrate on the set of supercompact points $x \in \power_\nu(\nu^+)$ for which $\x_\nu$ is compatible with $\x_{t \fr s}$
		(see \cite{Gitik-iteration} for details on the extensions $W_{\nu,\tau}(t \fr s)$ of $W_{\nu,\tau}$).
	\end{itemize}
	
	When extending a condition $p$ we are allowed to shrink the tree $T$ (a direct extension, $\leq^*$) or add new points from $T$ to the sequence $t$ (a pure non-direct extension). In both cases, we are also allowed to switch $t$ with another sequence $t'$ for which $\x_t = \x_{t'}$.
	\\
	
	For every $\alpha \leq \kappa$ and $\tau^* \leq o(\alpha)$ the poset $\po(\alpha)$ has a natural variant denoted $\po(\alpha,\tau^*)$ in which the tree splitting levels correspond to the shorter list of  $\W$ measures $W_{\alpha,\tau}$ with $\tau < \tau^*$. In particular, we have that $\po(\alpha,o(\alpha)) = \po(\alpha)$, and that for every $\tau^* < o(\alpha)$, $\po(\alpha,\tau^*)$ coincides with $j_{\tau^*}(\po)(\alpha)$\footnote{i.e., stage $\alpha$ of the iteration $j_{\tau^*}(\po)$} as constructed in the $W_{\alpha,\tau^*}$ ultrapower model, $M_{\tau^*}$.
	Both $\po(\alpha)$ and its variants $\po(\alpha,\tau^*)$, are of Prikry-type with $\alpha$-closed direct extension orders. 
	
	Following \cite{Gitik-iteration}, it is routine to verify $\po$ satisfies the following additional properties.\\
	
	\begin{fact}[Basic properties of $\po$, continued]${}$
		\begin{enumerate}
			\conti
			\item For every $\nu < \kappa$, $\po$ changes its cofinality if and only if $o(\nu) >0$ and then $\cf(\nu)^{V[\G{\po}]} = \cf^V(\omega^{o(\nu)})$. In particular, if $o(\nu) = \rho$ is a regular cardinal then $\cf^{V[\G{\po}]}(\nu) = \rho$.
			\item For every $E \in \bigcap_{1 \leq \tau < \lambda}U_{\kappa,\tau}$ (in $V$), the set $E^+ = E \cup \cof(\geq \lambda)$ becomes $(\lambda+1)$-fat stationary in $V[\G{\po}]$. Namely, for every closed unbounded subset $C$ of $\kappa$, $C \cap E^+$ contains a closed set of order type $\lambda+1$. 
		\end{enumerate}
	\end{fact}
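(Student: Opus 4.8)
\noindent\emph{Proof plan.} For item (4), fix $\nu < \kappa$ and decompose $\po \cong \po_\nu * \po(\nu) * (\po/\po_{\nu+1})$. By the basic properties of the iteration the tail $\po/\po_{\nu+1}$ has a $\nu$-closed direct extension order and the Prikry property, hence adds no new bounded subsets of $\nu$ and collapses no cardinal $\geq \nu$; in particular it leaves $\cf(\nu)$ unchanged. It then suffices to analyse $\po_\nu * \po(\nu)$. If $o(\nu) = 0$ then $\po(\nu)$ is trivial, and $\po_\nu$ — being a Gitik iteration of posets that add no bounded subsets below their stage, $\nu$-c.c.\ when $\nu$ is Mahlo — preserves all cardinals and cofinalities up to and including those $\nu$ whose index carries no measure, so $\cf^{V[\G{\po}]}(\nu) = \cf^V(\nu)$. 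If $o(\nu) > 0$ then $\po(\nu)$ is the supercompact Magidor poset threading $\x_\nu$; the induced increasing sequence $\la \nu_x \mid x \in \x_\nu\ra$ is a closed cofinal subset of $\nu$ of order type $\omega^{o(\nu)}$ by Magidor's analysis (cf.\ \cite{Gitik-iteration},\cite{Gitik-HB}), and genericity of $\x_\nu$ rules out any shorter cofinal subset, so $\cf^{V[\G{\po}]}(\nu)$ equals the cofinality of $\omega^{o(\nu)}$, computed in $V$ since cofinalities below $\lambda$ are not moved by $\po$. When $o(\nu) = \rho$ is regular this is $\rho$ (as $\omega^1 = \omega$ and $\omega^\rho = \rho$ for uncountable regular $\rho$).

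For item (5), fix $E \in \bigcap_{1 \le \tau < \lambda} U_{\kappa,\tau}$ and a closed unbounded $C \subseteq \kappa$ in $V[\G{\po}]$; the goal is a closed $D \subseteq C \cap E^+$ of order type $\lambda + 1$. First reduce: $\kappa$ is measurable, hence Mahlo, so $\po = \po_\kappa$ is $\kappa$-c.c., whence $C$ contains a club in $V$ and we may assume $C \in V$. By item (2) $\po$ adds no subsets of $\lambda$, so $\lambda$ stays regular in $V[\G{\po}]$; by item (4), $\cof^{V[\G{\po}]}(\geq\lambda)\cap\kappa = \cof^{V}(\geq\lambda)\cap\kappa$. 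Next refine $E$ by intersecting it with $\lim(C)$, with $\{\nu : o(\nu) \geq 1\}$, and with the reflecting kernel $E^\# = \{\nu : o(\nu)\geq 1,\ E\cap\nu \text{ stationary in }\nu,\ E\cap\nu \in \bigcap_{1\leq\tau<o(\nu)}U_{\nu,\tau}\}$; using coherence of $\U$ and normality each of these lies in $\bigcap_{1\leq\tau<\lambda}U_{\kappa,\tau}$, so the refined $E$ still does, and now $E\subseteq\lim(C)\subseteq C$, $E$ concentrates above $\lambda$, and $E$ is ``self-similar''.

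The engine is a local observation about the generic Magidor sequences. For $\nu \in E$ let $C_\nu = \{\nu_x \mid x \in \x_\nu\}$, the closed unbounded subset of $\nu$ induced by $\x_\nu$. Since the splitting measures $W_{\nu,\tau}$ ($1 \le \tau < o(\nu)$) project to the normal measures $U_{\nu,\tau} \ni E\cap\nu$, all of them contain $\lim(C)\cap\nu$ and the tail $\{x : \nu_x > \lambda\}$, so a density argument allows one to take $C_\nu \subseteq \lim(C)$ above $\lambda$, with its limit points (exactly the $\nu_x$ of positive Mitchell order) in $E$ and its non-limit points (the $\nu_x$ of order $0$, inaccessible in $V$) in $\cof^V(\geq\lambda) = \cof^{V[\G{\po}]}(\geq\lambda)$. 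Thus $C_\nu \cup \{\nu\}$ is a closed subset of $C \cap E^+$ of order type $\omega^{o(\nu)} + 1$, and by the threading structure $C_\nu \cap (\mu+1) = C_\mu \cup \{\mu\}$ for each $\mu \in C_\nu$ of positive order. As $o(\nu) < \lambda$ for all $\nu < \kappa$, each such block has order type $< \lambda+1$, so one stacks $\lambda$ of them: build recursively on $i < \lambda$ targets $\mu^{(i)} \in E$ and end-extending closed blocks $D_i \subseteq C \cap E^+$ with $\sup D_i = \mu^{(i)}$, arranging at a limit stage $i$ of cofinality $\rho$ that $\bigcup_{j<i}D_j$ converges to a point $\mu^{(i)}\in E$ chosen with $o(\mu^{(i)}) = \rho$, so that $\mu^{(i)} \in E \subseteq E^+$ caps the block while keeping it closed. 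Then $D = \bigcup_{i<\lambda}(D_i \cup \{\mu^{(i)}\}) \cup \{\sup_i \mu^{(i)}\}$, thinned to order type $\lambda+1$, is closed, lies in $C \cap E^+$ (the top has cofinality $\lambda$, so is in $\cof(\geq\lambda)$; every other limit point lies in some block $D_i\subseteq E^+$ or equals some $\mu^{(i)}\in E$), and witnesses $(\lambda+1)$-fatness.

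The hard part is precisely the limit stages of this stacking: $E$ concentrates on inaccessibles, so it contains no continuous $\lambda$-sequence in $V$, and there is no purely local way to force the caps $\mu^{(i)}$ into $E$. Resolving it requires organizing the recursion so that the caps are themselves elements of $E$ carrying long enough Magidor sequences — using that $\{\nu \in E : o(\nu) = \rho\}$ is $U_{\kappa,\rho}$-large for each $\rho \in [1,\lambda)$ and fitting earlier blocks as cofinal subsets of $C_{\mu^{(i)}}$ — together with a coherence bookkeeping driven by the length-$\lambda$ coherent sequence $\U$ at $\kappa$. This is exactly the content of Gitik's fatness lemma in \cite{Gitik-iteration}, which moreover verifies that the density reductions used above for $C_\nu$ are available for the genuine generic $\G{\po}$ and not merely below master conditions; I would import that argument here.
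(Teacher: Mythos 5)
The paper does not actually prove this Fact; it states \pwi Following \cite{Gitik-iteration}, it is routine to verify $\po$ satisfies the following additional properties\pwi{} and leaves the argument to that reference. Your write-up fills in a great deal more: the decomposition $\po \cong \po_\nu * \po(\nu) * (\po/\po_{\nu+1})$ and the order-type computation for item (4), and the reflecting-kernel refinement of $E$, the local Magidor-block analysis of $C_\nu$, and the block-stacking scheme for item (5). You also correctly isolate what actually requires work --- arranging caps $\mu^{(i)} \in E$ at limit stages so that earlier blocks converge into $C_{\mu^{(i)}}$, and upgrading the density reductions on $C_\nu$ from \pwi below some condition\pwi{} to the actual generic filter --- and you defer to Gitik's fatness lemma for exactly that. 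In other words you follow the same route as the paper (import Gitik's argument) but make a serious effort to reconstruct it, and you honestly flag the point where the sketch stops being a proof.

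One inaccuracy worth correcting: you write that item (4) gives $\cof^{V[\G{\po}]}(\geq\lambda)\cap\kappa = \cof^{V}(\geq\lambda)\cap\kappa$. This is false: any $\nu < \kappa$ regular in $V$ with $o(\nu)>0$ has $\cf^{V[\G{\po}]}(\nu) = \cf^V(\omega^{o(\nu)}) < \lambda$, so it moves from $\cof^V(\geq\lambda)$ to $\cof^{V[\G{\po}]}(<\lambda)$. What you actually need and use is the one-directional statement: ordinals of Mitchell order $0$ with $\cf^V \geq \lambda$ keep cofinality $\geq\lambda$ in $V[\G{\po}]$ (since $\po$ only shrinks cofinalities at points of positive order). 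That suffices for the non-limit points $\nu_x \in C_\nu$, which are inaccessible of order $0$ in $V$. Similarly, for the non-Mahlo case of item (4), the reasoning \pwi $\po_\nu$ preserves cofinalities at $\nu$\pwi{} needs the $\nu^+$-c.c.\ or a chain condition argument at $\sup$ of measurables below $\nu$, not simply the cardinality bound $|\po_\nu| = 2^\nu$; this is again a detail Gitik treats and one should not pass over silently.
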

	
	\noindent The iteration $\po$ naturally projects to a similar iteration $\po^{\U} = \la \po^{\U}_\nu,\po^{\U}(\nu) \mid \nu < \kappa\ra$ which threads Prikry/Magidor sequences $c_\nu$, $\nu < \kappa$ or ordinals (i.e., as opposed to the $\x_\nu$ sequences of supercompact points). For each $\nu < \kappa$, we can derive the sequence $c_\nu$ from $\x_{\nu}$ be replacing each supercompact point $x \in \x_{\nu}$ with its ordinal projection $\nu_x = x \cap \nu$. 
	
	The second part of the preparatory forcing is a Levy collapse poset, $\Col = \Col(\lambda,<\kappa)$. 
	It follows from basic properies of $\po$, that for every $E \in \bigcap_{\tau < \lambda}U_{\kappa,\tau}$, the set $\bar{E} = E \cup \cof(\geq \lambda)$ remains $(\lambda+1)$-fat stationary in $V[\G{\po}*\G{\col}]$, and thus by a well-known argument of Abraham and Shelah (\cite{Abr-She-fatstat}), the poset $\CC[\bar{E}]$ of all closed bounded subsets $d \subset E$, ordered by end-extension, is $\kappa$-distributive. 
	
	For $\nu < \kappa$, we denote $\col(\lambda,<\nu)$ by $\col_\nu$.
	The fact that $\po/\po_{\nu}$ does not add new bounded subsets to 
	$\nu$ implies $\Col_\nu = \Col(\lambda,<\nu)^{V[\G{\po}]}$ belongs to the intermediate extension $V[\G{\po_{\nu}}]$. Moreover, assuming $\nu$ is regular in $V$,  $\Col(\lambda,<\nu)$ is $\nu$.c.c in $V[\G{\po_{\nu}}]$ and so, the genericity of a filter $G_\nu \subset \col_{\nu}$ is determined by its bounded pieces $G_\nu \uhr \mu$, $\mu < \nu$. Therefore, even though $\nu$ may become singular at stage $\nu$ of the iteration $\po$, the natural restriction of a $\col$ generic filter $\G{\col}$ over $V[G(\po)]$ to its bounded pieces must form a  $\col_{\nu}$ generic filter over $V[\G{\po_{\nu}}]$. Hence, $\po_{\nu} * \Col_\nu$ is subsumed by $\po * \Col$. \\
	
	\noindent\textbf{On the club shooting iteration - }
	The main part of our forcing $\qo$ is a club shooting iteration $\CC = \CC_{\kappa^+} = \la \CC_\eta, \CC(\eta) \mid \eta < \kappa^+\ra$. For every $\eta \leq \kappa^+$, the support of a condition $q = \la q_\gamma \mid \gamma \in \supp(q)\ra$ in $\CC_\eta$ is of size $|\supp(q)| < \kappa$, and for every $\gamma \in \supp(q)$, 
	$q(\gamma)$ is a $\CC_{\gamma}$-name of a closed bounded subset of $\kappa$ subject to certain additional conditions. By a standard argument, $\CC$ satisfies $\kappa^+$.c.c.
	The purpose of the iteration $\CC$ is to destroy the stationarity of the subsets of $\kappa \cap \cof(<\lambda)$ which fail to be positive with respect to a natural filter extension of $U_{\kappa,\tau}$ for some $\tau$, $1 \leq \tau < \lambda$.
	This result is not obtained by directly adding closed unbounded sets through the complements of all ``bad`` stationary sets, but rather adding certain $<\lambda$-clubs through the $V$ sets in $\bigcap_{1\leq \tau < \lambda} U_{\kappa,\tau}$.
	Let $\la E_\eta \mid \eta < \kappa^+\ra$ be an enumeration of all sets $E \in \bigcap_{1 \leq \tau < \lambda}U_{\kappa,\tau}$, and for every $\eta < \kappa^+$, let $\bar{E_\eta} = E_\eta \cup \cof(\lambda)$. 
	Gitik has shown (\cite{Gitik-precaleph2}) that a desirable extension -  in which all $\cof(<\lambda)$ stationary sets in $\kappa$ are positive with respect to a suitble extension of $U_{\kappa,\tau}$ - can be obtained by iteratively adding a sequence of closed unbounded sets $\vec{C} = \la C_\eta \mid \eta < \kappa^+\ra$ so that for every $\eta < \kappa^+$, $C_\eta$ is a subset of both $\bar{E_\eta}$ and the set of all $\vec{C}\uhr \eta$-generic points. An ordinal $\nu < \kappa$ is a $\vec{C}\uhr \eta$-generic if the sequence $\vec{C}\uhr\la \eta, \nu\ra = \la C_\xi \cap \nu \mid \xi \in \tau_\eta``\nu\ra$ is generic over $V[\G{\po_{\nu}}* \G{\Col_\nu})]$, where $\tau_\eta : \kappa \to \kappa$ is the $\eta$-th function in some fixed canonical sequence.\\
	
	The crux of the argument lies in the proof that the iteration $\CC$ is $\kappa$-distributive. Since $\CC$ is $\kappa^+$.c.c, this  amounts to showing its initial segments,  $\CC_\eta$, $\eta < \kappa^+$, are $\kappa$-distributive. 
	The last is proved by induction on $\eta < \kappa^+$, and is based on the fact that each $\CC_\eta$ is subsumed by the Priky type poset $\po(\kappa,\tau)$, for every $1 \leq \tau < \lambda$. 
	Once this is established for $\CC_{\eta}$,  $\po(\kappa,\tau)=j_{\tau}(\po)(\kappa)$, we can reflect this statement on a set which belongs to $U_{\kappa,\tau}$, for all $1 \leq \tau < \lambda$, and conclude that every set
	$E \in \bigcap_{1 \leq \tau < \lambda} U_{\kappa,\tau}$ contains a subset $E' \in \bigcap_{1 \leq \tau < \lambda} U_{\kappa,\tau}$ whose ordinals are
	potential generic ordinals for $\CC_\eta$. These are 
	ordinals $\nu < \kappa$ with the property that
	every condition $\vec{d} \in \CC_\eta\uhr \nu$ in $V[\G{\po}*\G{\col}]$ extends to a condition $\bar{d} \in \CC_\eta$ which forms a $\CC_\eta \uhr \nu$ generic filter over $V[\G{\po_\nu}*\G{\col_\nu}]$.  
	Then, with a fat stationary set $E' \subset E_\eta$ of potentially generic points for $\CC_{\eta}$, a straightforward extension of the Abraham-Shelah argument from \cite{Abr-She-fatstat} shows that $\CC_{\eta+1}$ is $\kappa$-distributive. The argument for limit stages is similar (see \cite{Gitik-iteration}).
	
	
	We note that the forcing $\po(\kappa,\tau)$ is defined in $V[\G{\po}]$ and is independent of the $\col$ generic filter $\G{\col}$. Therefore, when including the preparation forcing $\po*\col$, the entire poset can be described as $\po * (\col \times \po(\kappa,\tau)) \cong \po * (\po(\kappa,\tau) \times \col)$. \\
	
	\noindent\textbf{On the absorption argument and projection properties- }
	Let     $V[\G{\po}*\G{\col}]$ be a $V$-generic extension by $\po*\col$.
	We describe the main idea of Gitik's argument that $\CC_\eta$ is subsumbed by  $\po(\kappa,\tau)$ in $V[G(\po)* G(\Col)]$ (or equivalently, in $M_{\tau}[G(\po)* G(\Col)]$), for all $1 \leq \tau < \lambda$. For further details about the argument, we refer to \cite{Gitik-iteration}.
	
	To start, recall  that for a given $E_0\in \bigcap_{1 \leq \tau < \lambda}U_{\kappa,\tau}$, the set $\bar{E_0} = E_0 \cup \cof(\lambda)$  is fat stationary in $V[\G{\po}*\G{\col}]$ and therefore $\CC[\bar{E_0}]$ is $\kappa$-distributive in $V[\G{\po}*\G{\col}]$.
	Let $\la D_i \mid i < \kappa^+\ra$ be an enumeration of all dense open sets of $\CC[\bar{E_0}]$. 
	The forcing $\po(\kappa,1) = j_1(\po)(\kappa)$ adds a supercompact sequence  $\x_{\kappa} = \la x_n \mid n < \omega\ra$ of Prikry points in $\power_\kappa(\kappa^+)$. For each $n <\omega$, the fact $|x_n| < \kappa$ implies that the set $D_{x_n} = \cap_{i \in x_n}D_i$ is dense, and thus for each $d \in \CC[\bar{E_0}]$, it is possible to construct an $\omega$-increasing sequence $\la d_n \mid n < \omega\ra$ of conditions which extend $d$, so that
	$d_n \in D_{x_n}$ for each $n < \omega$. The sequence generates a $\CC[\bar{E_0}]$-generic filter over $V[\G{\po}* \G{\col}]$ since $\cup_n \x_n = \kappa^+$. We conclude $\po(\kappa,1)$ absorbs $\CC[\bar{E_0}]$. 
	Back in $V$, we can reflect this result on some subset $E_0'(1)\in U_{\kappa,1}$ of $E_0$ consisting of potentially generic ordinals.
	Moving one step up to $\po({\kappa,2})$, recall that every $\po(\kappa,2)$ generic sequence $\x_{\kappa}$ introduces a cofinal sequence of ordinals $c_\kappa$, which is Magidor generic for $\po^{\U}(\kappa,2)$ and has an infinite intersection with every set of $U_{\kappa,1}$. Therefore, $c_\kappa$
	contains a cofinal subsequence $c' = \la \nu_n \mid n < \omega\ra$ consisting  of $\CC[\bar{E_0}]$ potential generic points and is almost contained in every closed unbounded subset of $\kappa$ in $V[\G{\po}*\G{\Col}]$.
	Consequently, given a condition $d \in \CC[\bar{E_0}]$, we can form a $\CC[\bar{E}_0]$ generic club extending $d$,  by threading an $\omega$-sequence of locally generic conditions  $\bar{d}_{\nu_n} \subset \nu_n + 1$. It follows that $\po^{\U}(\kappa,2)$ projects to $\CC[\bar{E}_0]$ and so  $\po(\kappa,2)$ does too.  As before, we can reflect this assertion  on a $U_{\kappa,2}$ measure one set $E_0'(2) \subset E_0$, consisting of $\CC[\bar{E}_0]$ potentially generic ordinals. The same threading methods show that $\po^{\U}(\kappa,\tau)$ (and thus $\po(\kappa,\tau)$) projects to $\CC[\bar{E}_0]$, for all $1 < \tau \leq \lambda$, and implies that in $V$, there exists a subset $E_1' \in \bigcap_{1 \leq \tau < \lambda}U_{\kappa,\tau}$ which consists of $\CC[\bar{E_0}]$ potentially generic ordinals.  

	This concludes the absorption argument for $\CC_1 = \CC(0) =\CC[\bar{E_0}]$, which is  also the starting point for the argument for $\CC_2 = \CC(0)*\CC(1)$. We may assume that the set $E_1'$ of $\CC_1$-potentially generic ordinals is a subset of $E_1$.
	As mentioned above, the fact that $\bar{E_1}$ is fat stationary in $V[\G{\po}*\G{\col}]$ and every ordinal $\nu \in \bar{E_1} \setminus \cof(\lambda)$ is $\CC_1$ potentially generic implies 
	$\CC_2$ is $\kappa$-distributive.     
	We can therefore repeat the argument above, showing first that the supercompact Prikry forcing $\po(\kappa,1)$ subsumes $\CC_2$, and then, by in induction on $2 \leq \tau \leq \lambda$ that it is possible to thread $\CC_2\uhr \nu$-generics to produce a $\CC_2$ generic pair $\vec{C}\uhr 2 = \la C_0,C_1\ra$ in a  $\po^{\U}(\kappa,\tau)$ generic extension.
	This procedure can be repeated for every $\eta < \kappa^+$ using the same threading principle. 
	
	Let $\NG{\vec{C}}$ be a $\po(\kappa,\tau)$-name for a $\CC_{\eta}$ generic sequence, obtained by a threading process, as described above. Therefore, $\NG{\vec{C}}$ naturally assigns to every stem $t$ of a condition $p = \la t,T\ra$ a condition $\vec{d}_t \in \CC_{\eta}$ which is forced by $p$ to be an initial segment of $\NG{\vec{C}}$. We note that each $\po(\kappa,\tau)$ introduces many names of $\CC_{\eta}$ generic clubs, and thus, many different forcing projections to $\CC_{\eta}$. For example, we can decide to start threading the generic sequence $\vec{C}$ above a condition $\vec{d} \in \CC_{\eta}$, or change the ``local`` choices of $\CC_{\eta}\uhr \nu$ generic conditions $\vec{d}_\nu$, added to $\vec{d}_t$  when adding the ordinal $\nu$ to the stem $t$. 
	This flexible behavior of the threading procedure guarantees that the induced forcing projections from $\po(\kappa,\tau)$ to $\CC_\eta$ satisfy the following natural extension properties:
	
	\begin{enumerate}
		\item \textit{(Extendability of projections)}\\
		Let  $\eta_1< \eta_2$ and suppose $\NG{\vec{C}_{\eta_1}}$ is a $\po(\kappa,\tau)$ name of a $\CC_{\eta_1}$ generic sequence for some $1 \leq \tau \leq \lambda$, and let $\pi_{\eta_1} = \pi_{\NG{\vec{C}_{\eta_1}}}$ be its induced forcing projection 
		from $\po(\kappa,\tau)$ to the Boolean completion of $\CC_{\eta}$.
		For every $p \in \po(\kappa,\tau)$ and a condition $\vec{d}_2 \in \CC_{\eta_2}$ such that $\vec{d}\uhr \eta_1 \in \CC_{\eta_1}$ is compatible with $\pi_{\eta_1}(p)$, there exists an extension of $\pi_{\eta_1}$  to a projection 
		$\pi_{\eta_2} = \pi_{\NG{\vec{C}_{\eta_2}}}$ to $\CC_{\eta_2}$ such that 
		$\pi_{\eta_2}(p)$ and $\vec{d}$ are compatible.
		
		\item \textit{(Compatibility of direct extensions and quotients)}\\
		Let $(q,\name{g})$ be a condition of the preparation forcing $\po*\Col$. Suppose that
		$\name{p} = \la \can{t},\name{T}\ra$, $\name{\vec{d}}$, and  $\NG{\vec{C}}$ are $\po*\Col$ names
		of conditions (in $\po(\kappa,\tau)$ and $\CC_{\eta}$ respectively) and a $\po(\kappa,\tau)$-name of a $\CC_{\eta}$ generic sequence.
		If $(q,\name{g})$ forces  
		$\vec{d}$ is compatible with the condition $\vec{d}_t \in \CC_{\eta}$, determined by the stem $t$ of $p$ and $\NG{\vec{C}}$, then there is a condition 
		$(q',\name{g}')$ extending $(q,\name{g})$,  and a point $x$ such that $(q',\name{g'})$ forces $x \in \suc_{\name{T}}(t)$ and $\vec{d}_{t \fr \la x\ra}$ extends $\vec{d}$.
		By a standard genericity argument, it follows that
		the direct extension order of $\po(\kappa,\tau)$ is compatible with the projections induced by $\NG{\vec{C}}$. Namely, 
		if $p' = \la t,T'\ra$ belongs to the quotient $\po(\kappa,\tau)/\la \NG{\vec{C}}, \vec{C}_\eta\ra$ and $p = \la t,T\ra$ is a direct extension of $p'$, then $p$ is also a member of the quotient. 
	\end{enumerate}

	\noindent\textbf{The Prikry posets $\qo^*(\tau)$ - } 
	Our final forcing is $\qo = (\po * \col) * \CC$ where $\CC = \CC_{\kappa^+}$ is the iteration limit of $\CC_{\eta}$, $\eta < \kappa^+$. 
	The restriction on the support to be of size $<\kappa$ guarantees $\CC$ satisfies $\kappa^+$.c.c, and therefore a sequence of clubs $\vec{C} = \la C_i \mid i < \kappa^+\ra$ is generic if and only if its initial segments $\vec{C}\uhr \eta$, $\eta < \kappa^+$ are generic for $\CC_{\eta}$.
	For every $\tau$, $1 \leq \tau \leq \lambda$, let $\qo^*(\tau) = (\po*\col)* \po(\kappa,\tau) = (\po*\po(\kappa,\tau))*\col$.
	The forcing $\po*\po(\kappa,\tau)$ is clearly of Prikry-type with $\lambda$-closed direct extension order, and by trivially extending the direct extension order to include the order of the collapse part $\col$, we obtain a desirable Prikry-type forcing structure on $\qo^*(\tau)$. 
	
	We claim $\qo^*(\tau)$ absorbs $\qo$. It is clearly sufficient to show $\po(\kappa,\tau)$ absorbs $\CC$, which we prove by an induction on $\tau$. For $\tau = 1$, recall $\po(\kappa,\tau)$ adds an $\omega$ sequence $\la \eta_n \mid n < \omega\ra$ which is cofinal in $\kappa^+$.  Then, using the fact $\po(\kappa,\tau)$ projects to $\CC_{\eta_n}$ for each $n < \omega$ and the extendability of projections, it is routine to form a sequence $\la \vec{d}_m \mid m < \omega\ra$ whose restriction to $\CC_{\eta_n}$ for each $n < \omega$ is generic. Hence, the sequence form a $\CC$ generic filter. 
	For $\tau > 1$, the forcing $\po(\kappa,\tau)$ adds a supercompact Magidor sequence $\x_{\kappa} = \la x_i \mid i < \tau'\ra$ (where $\tau'$ is the ordinal exponent $\omega^\tau$) which covers $\kappa^+$ and therefore induces a cofinal sequence $\la \eta_i \mid i < \tau'\ra$. By the induction hypothesis, we can form an increasing sequence $\la \bar{d_i} \mid i < \tau'\ra$ so that $\bar{d_i} \in \CC_{\eta_i}$ is generic for $\CC_{\eta_i}\uhr \kappa_{x_i}$. The local genericity of the $\bar{d_i}$ and the fact that $\eta_i$ is cofinal in $\kappa^+$ imply the sequence generates a $\CC$ generic filter. \\

	\noindent\textbf{The precipitous of the nonstationary ideal in $V[\G{\qo}]$ -}
	For each $\eta < \kappa^+$, let $\qo\uhr \eta$ denote the sub-forcing $(\po*\col)*\CC_{\eta}$ of $\qo = (\po*\col)*\CC$. 
	We denote the restricted nonstationary ideal on $\kappa$ to the family of stationary sets $S \subset \kappa \cap \cof(<\lambda)$ by 
	$\NS_{\kappa}\uhr \cof(<\lambda)$. It is shown in 
	\cite{Gitik-iteration} that for every $\eta < \kappa^+$, the restriction of $\NS_{\kappa}\uhr \cof(<\lambda)$ to sets
	in $V[\G{\qo_\eta}]$ is characterized by the following. For every subset of $\kappa \cap \cof(<\lambda)$, $Z = \name{Z}_{\G{\qo_\eta}}$ in $V[\G{\qo_\eta}]$, $Z$ is nonstationary if and only if for every ordinal $\tau$, $1 \leq \tau < \lambda$, there are conditions $(p,g) \in \G{\po}*\G{\col}$ and $\vec{d} \in \G{\CC_{\eta}}$ such that as conditions in $j_{\tau}(\po*\col)$ and $j_\tau(\CC_{\eta})$ respectively, 
	\[(p,g) \force \forall \vec{C}_\eta \geq \vec{d} \text{ generic over } M_\tau[\NG{\po}*\NG{\col}], \quad \vec{d}_{\vec{C_\eta}}^\tau \force_{j_\tau(\CC_{\eta})} \can{\kappa} \not\in j_\tau(\name{Z}),\]
	where $\vec{d}_{\vec{C_\eta}}^\tau$ is the $j_\tau(\CC_{\eta})$ condition, obtained from $\vec{C}_\eta = \la C_i \mid i < \eta\ra$ by replacing each $C_i$ with $\bar{C_i} = C_i \cup \{\kappa\}$, and changing the indexing of $\vec{C}_\eta$ from $i \in \eta$ to $i \in j_\tau``\eta$. Moreover, since in our arguments above, every $\CC_{\eta}$-generic sequence was constructed from the $j_\tau(\po*\col)$ sub-forcing $j_\tau(\po)_{\kappa+1}*j_\tau(\col)_\kappa = \po*\po(\kappa,\tau)*\col$, we may assume that 
	$(p,g)$ above forces $\vec{C}_\eta$ is a $j_\tau(\po)\uhr\kappa+1 * j_\tau(\col)_\kappa$ name of a $\CC_\eta$ generic sequence.
	
	Now, by a standard genericity argument, for every stationary set $S \subset \kappa \cap \cof(<\lambda)$ in $V[\G{\qo_\eta}] = V[\G{\po}*\G{\col}*\G{\CC_{\eta}}]$, there are $(p',g') \in j_\tau(\po*\col)$ and a name $\NG{\vec{C}_\eta}$, which satisfy the following conditions:
	
	\begin{itemize}
		\item $\NG{\vec{C}_\eta}$ is a $\po*\po(\kappa,\tau)*\col$ name of a $\CC_{\eta}$ generic sequence.
		\item $(p',g') \force_{j_\tau(\po*\col)} \vec{d}^\tau_{\NG{\vec{C}_\eta}} \force \can{\kappa} \in j_\tau(\name{S})$    
		\item 
		$(p',g')$ and $\NG{\vec{C}_\eta}$ are compatible with the generic information of $\G{\po}$, $\G{\col}$ and $\G{\CC_{\eta}}$. Namely, $(p',g')$ is a member of the quotient forcing $j_\tau(\po*\col)/ (\G{\po*\col}*\la \G{\CC_{\eta}} \NG{\vec{C}_\eta}\ra)$\footnote{see Section \ref{section-0} for explanation of the notation.}.
	\end{itemize}
	
	It clearly follows that by forcing with the quotient given above, we can generically extend the embedding $j_\tau : V \to M_\tau$ to have domain $V[\G{\qo_\eta}]$. Therefore, $V[\G{\qo_\eta}]$ cannot contain a counter example for the precipitousness of $\NS_{\kappa}\uhr \cof(<\lambda)$. But a counter example for precipitousness is just an $\omega$ sequence of functions in $\kappa^\kappa$ and $\CC$ satisfies $\kappa^+$.c.c, so $\NS_{\kappa}\uhr \cof(<\lambda)$ must be precipitous in $V[\qo]$. 
	
	This concludes the description of the poset $\qo$ and the key arguments in \cite{Gitik-iteration}. We turn to the proof of Proposition \ref{prop4.1}. The idea is to show that quotient posets $j_\tau(\qo)/\qo$, $\tau < \lambda$ are subsumed by Prikry-type forcings, which allow us to generically extend the embedding $j_\tau$ to domain $V[\G{\qo}]$. \\
	
	\emph{proof.} \textbf{(Proposition \label{prop 4.1}).}
	We already proved that the poset $\qo = (\po*\col)*\CC$ satisfies the first three conditions in the statement of Proposition \ref{prop 4.1}.
	It remains to prove that in $V[\G{\qo}]$, every stationary set $S \subset \kappa \cap \cof(<\lambda)$ is positive with respect to a $\lambda$-Prikry-closed nonstationary ideal on $\kappa$.
	
	Let $S \subset \kappa \cap \cof(<\lambda)$ be stationary
	in $V[\G{\qo}] = V[\G{\po}*\G{\col}*\G{\CC}]$, and fix $\eta < \kappa^+$ for which there exists a $\CC_{\eta}$ name $\name{S}$ for $S$.
	For ease of notation, we will not distinguish between the $\CC$ generic filter $\G{\CC}$ and 
	its induced generic club sequence $\vec{C} = \la C_i \mid i < \kappa^+\ra$.
	By the arguments described above, concerning the precipitousness of the nonstationary ideal in $V[\G{\qo}]$, there is an ordinal $\tau$, $1 \leq \tau < \lambda$, a $\po*\po(\kappa,\tau)*\col$-name $\NG{\vec{C}_\eta}$ of a $\CC_{\eta}$-generic sequence, and a condition $(p',g')$ in the quotient forcing $j_\tau(\po*\col)/ (\G{\po}*\G{(\col)}* \la \vec{C}\uhr\eta,\NG{\vec{C}_\uhr\eta}\ra)$, so that
	\[ p'*g'*\vec{d}^\tau_{\vec{C_\eta}} \force_{j_\tau(\qo_\eta)} \can{\kappa} \in j_\tau(\name{S}).\]
	
	By naturally identifying $\qo_\eta$ as a sub-forcing of $\qo$, we may consider     $p'*g'*\vec{d}^\tau_{\vec{C_\eta}}$ as a condition of $j_\tau(\qo)$. Now, $\NG{\vec{C_\eta}}$ is a $\po*\col*\po(\kappa,\tau)$-name. Recall $\po*\col*\po(\kappa,\tau)$ projects to $\CC$,  and moreover, by the extendability properties of the projections of $\po(\kappa,\tau)$ to $\CC_{\eta}$, there exists a 
	$\po*\col*\po(\kappa,\tau)$-name $\NG{\vec{C}}$ for a $\CC$-generic sequence which extends $\NG{\vec{C_\eta}}$, so that
	$(p',g')$ belongs to the extended quotient 
	$j_\tau(\po*\col)/ (\G{\po}*\G{(\col)}* \la \vec{C},\NG{\vec{C}}\ra)$. 
	
	Let $\la C_i \mid i < \kappa^+\ra$ be an enumeration  of $\vec{C}$. Note that function $\vec{d}_{\vec{C}}^\tau = \{ \la j_\tau(i), C_i \cup \{{\kappa}\} \ra \mid i < \kappa^+\ra$ belongs to $M_\tau[\G{\qo}]$ since $M_\tau^{\kappa^+} \subset M_\tau$.
	
	Working in $M_\tau[\G{\qo}]$, consider the poset 
	\[\RR = j_\tau(\po*\col)/ (\G{\po}*\G{(\col)}* \la \vec{C},\NG{\vec{C}}\ra) \thinspace\thinspace * \thinspace\thinspace  j_\tau(\CC) \] 
	and its condition 
	\[r = p'*g' \thinspace\thinspace* \thinspace\thinspace\vec{d}_{{\vec{C}}}^\tau. \] 
	
	Define, in $V[\G{\qo}]$, the ideal $I_{\ro,r}$
	to be the family of all subsets $Z = \name{Z}_{\G{\qo}}$ of $\kappa$  such that 
	$r \force_\RR \can{\kappa} \not\in j_\tau(\name{Z})$.
	It is easy to see that $I$ is a non-principal $\kappa$-complete ideal on $\kappa$ and that $S$ belongs to its dual filter. 
	Moreover, it is easy to see that $r$ is a master condition with respect to $\G{\qo}$\footnote{Namely, $r$ extends $j_\tau(q)$ for every  $q \in \G{\qo}$.}, and consequently, if $C = \name{C}_[\G{\qo}]$ is a closed unbounded subset of $\kappa$ then $r$ forces $j_\tau(\name{C})$ is closed unbounded, and that $j_\tau(\name{C}) \cap \can{\kappa} = \name{C}$. In particular $r$ forces $\kappa$ is a limit point of $j_\tau(\name{C})$ and thus that it belongs to $j_\tau(\name{C})$.
	
	It remains to show $I_{\ro,r}$ is $\lambda$-Prikry-closed.
	Recall $\qo$ is subsumed by the Prikry-type forcing $\qo^* = \po*\col*\po(\kappa,\lambda)$.Therefore, $\RR$ is subsumed by the poset $\ro^* = j_\tau(\po*\col)/ (\G{\po}*\G{(\col)}* \la \vec{C},\NG{\vec{C}}\ra) \thinspace\thinspace * \thinspace\thinspace  j_\tau(\po(\kappa,\lambda))$.
	We may also assume there are conditions $r^* \in \ro^*$ and a forcing projection $\pi :\ro^* \to \ro$ such that $\pi(r^*) = r$. 
	
	Since $\po$ and $\col$ are identified here as sub-forcings of $j_\tau(\po)$ and $j_\tau(\col)$, respectively, $\ro^*$ we can identify $\ro^*$ with the quotient $\ro'/\la \vec{C},\NG{\vec{C}}\ra$, where \[ \ro' = j_\tau(\po)/\G{\po} * \left(j_\tau(\po(\kappa,\lambda)) \times j_\tau(\col)/\G{\col}\right).\] 
	
	We endow $\ro^*$ with the direct extension order $\leq^*$ of $\ro'$, which is the one inherited from the natural direct extension order of $j_\tau(\po)/\G{\po} * j_\tau(\po(\kappa,\lambda))$ and the usual order relation of 
	$j_\tau(\col)/\G{\col}$. It is easy to see that $\ro'$ with this direct order relation is $\lambda$-closed and of Prikry-type. 
	Furthermore, by the compatibility of direct extensions with $\NG{\vec{C}}$ induced quotients, the $\ro'$ quotient $\ro^*$ is compatible with $\leq^*$ in the sense that for every $t \in \ro^*$ and $t' \in \ro'$, if $t' \geq^* t$ then $t' \in \ro^*$.
	It follows that $\ro^*$ with $\leq^*$ forms a $\lambda$-closed-Prikry-type forcing. 
	
	Next, by a standard argument about ideals derived from elementary emebeddings, the forcing $\ro$ adds a generic filter for forcing with the positive sets (see \cite{For-HB}). Hence, $\ro^*$ projects to a dense subset of $I_{\ro,r}^+$, and we can translate the direct extension sub-order of $\ro^*$ to a sub-order $\leq^*_I$ of the ideal relation $\leq_{I_{\ro,r}} \uhr D$. It follows at once that $\leq^*_I$ is $\lambda$-closed. Suppose that $B \in D$ corresponds to some $b \in \ro^*$, and $\name{f}$ is a $\qo$-name for a function $f : B \to 2$. Then $b \force_{\ro^*} j_\tau(\name{f}) : j_\tau(\kappa) \to 2$ has a direct extension $b^* \geq^* b$ which forces $j_\tau(\name{f})(\can{\kappa}) = i^*$ for some $i^* \in 2$. It follows there exists $B^* \leq^* B$ which corresponds to $b^*$ such that $f``B^* = \{i^*\}$. \qed{Proposition \ref{prop4.1}}\\
	\qed{Theorem \ref{thm1.3}}
	
\providecommand{\bysame}{\leavevmode\hbox to3em{\hrulefill}\thinspace}
\providecommand{\MR}{\relax\ifhmode\unskip\space\fi MR }
\providecommand{\MRhref}[2]{%
	\href{http://www.ams.org/mathscinet-getitem?mr=#1}{#2}
}
\providecommand{\href}[2]{#2}

\end{document}